\documentclass[11pt]{amsart}

\usepackage{geometry}                
\geometry{a4paper, margin = 1.2 in}                   
\usepackage{graphicx}
\usepackage{amssymb}
\usepackage{enumerate}
\usepackage{dsfont}
\usepackage[utf8]{inputenc}
\usepackage[T1]{fontenc}
\usepackage[french,english]{babel}
\usepackage{hyperref}
\usepackage{color}
\usepackage{mathrsfs} 
\usepackage{algorithm}
\usepackage{algorithmic}
\usepackage{bm}
\usepackage{amsthm}
\usepackage{mathabx}

\newtheorem{Theorem}{Theorem}[section]
\newtheorem{Definition}[Theorem]{Definition}
\newtheorem{Proposition}[Theorem]{Proposition}

\newtheorem{Assumption}[Theorem]{Assumption}
\newtheorem{Lemma}[Theorem]{Lemma}
\newtheorem{Corollary}[Theorem]{Corollary}
\newtheorem{Remark}[Theorem]{Remark}



\newcommand{\one}[1]{\ensuremath \mathbf{1}_{\{#1\}}}

\newcommand{\NN}{\ensuremath \mathbb{N}}

\newcommand{\QQ}{\ensuremath \mathbb{Q}}

\newcommand{\mc}[1]{\ensuremath {\mathcal{#1}}}


\newcommand{\bb}{\bar{b}}



\newcommand{\tQ}{\tilde{\mathbb{Q}}}


\newcommand{\hX}{\hat{X}}

\newcommand{\hZ}{\hat{Z}}

\newcommand{\hOmega}{\hat{\Omega}}

\newcommand{\homega}{\hat{\omega}}

\newcommand{\hu}{\hat{u}}
\newcommand{\hv}{\hat{v}}

\newcommand{\hQ}{\hat{\mathbb{Q}}}



\newcommand{\supp}{\mathrm{supp}}

\newcommand{\mpar}[1]{\left( #1 \right)}
\newcommand{\mbra}[1]{\left[ #1 \right]}
\newcommand{\mbrace}[1]{\left\{ #1 \right\}}

\newcommand{\spop}[2]{ \pi_{#1}^{#2} } 
\newcommand{\sprse}[4]{\spop{\mc{V}_{#1}}{#2}\!\left[#3\right]\!\left( #4 \right) }




\newcommand{\cC}{\mathcal{C}}
\newcommand{\cD}{\mathcal{D}}

\newcommand{\cF}{\mathcal{F}}

\newcommand{\cL}{\mathcal{L}}

\newcommand{\E}{\mathbb{E}}
\newcommand{\F}{\mathbb{F}}
\renewcommand{\P}{\mathbb{P}}

\newcommand{\Q}{\mathbb{Q}}
\newcommand{\R}{\mathbb{R}}
\newcommand{\T}{\mathbb{T}}




\def \proof{{\noindent \bf Proof. }}
\def \eproof{\hbox{ }\hfill$\Box$}

\newcommand{\ud}{\mathrm{d}}


\newcommand{\set}[1]
    {\ensuremath{\{ #1 \}}}
    
\newcommand{\HP}[1] 
    {\ensuremath{\mathscr{H}^{#1}}}


\newcommand{\esp}[1]{\ensuremath{\mathbb{E} \!\! \left[#1\right] }}

\newcommand{\ti}[1]{t_{i #1}}

\renewcommand{\Xi}[1]{X_{i #1}}

\newcommand{\tv}{\tilde{v}}
\newcommand{\brv}{\breve{v}}

\title{ Cubature method to solve BSDEs: error expansion and complexity control
}
\author[Jean-Francois Chassagneux]{%
Jean-Francois Chassagneux$^{*,\S}$}
\author[Camilo A. Garcia Trillos]{Camilo A. Garcia Trillos$^{\dagger\ddagger,\S}$}

\begin{document}

\begin{abstract}
 We obtain an explicit error expansion for the solution of Backward Stochastic Differential Equations (BSDEs) using the cubature on Wiener spaces method. The result is proved under a mild strengthening of the assumptions needed for the application of the cubature method. The explicit expansion can then be used to construct implementable higher order approximations via Richardson-Romberg extrapolation. To allow for an effective efficiency improvement of the interpolated algorithm, we introduce an additional projection on finite grids through interpolation operators. We study the resulting complexity reduction in the case of the linear interpolation. 
\end{abstract}

\maketitle

\renewcommand{\thefootnote}{\fnsymbol{footnote}}
 \footnotetext[1]{Laboratoire de Probabilit\'es, Statistique et Modélisation, Universit\'e Paris Diderot. {\sf chassagneux@math.univ-paris-diderot.fr}}
 \footnotetext[2]{Department of Mathematics, University College London. {\sf camilo.garcia@ucl.ac.uk}. }
 \footnotetext[3]{The author's research  was partially supported by OpenGamma Ltd.}
  \footnotetext[4]{Acknowledgments: The authors would like to thank Christoph Reisinger and Yufei Zhang for pointing out a mistake in a preliminary version of this paper.}

 \renewcommand{\thefootnote}{\arabic{footnote}}


\section{Introduction}
\label{Sec:Introduction}

Let $(\Omega, \P, \cF, \F )$ with $\F=(\cF_t)_{t\in \R^+}$ be a filtered probability space satisfying the usual conditions.  
For some $T>0$, we consider the solution of the Markovian Backward Stochastic Differential Equation
\begin{align}
X_t= & x_0 + \int_0^t b(s,X_s) \ud s + \int_0^t \sigma(s,X_s) \ud W_s, \label{Eq:Fwd}\\
Y_t= & g(X_T) + \int_t^T f(s,X_s,Y_s,Z_s) \ud s + \int_t^T Z_s \ud W_s, \label{Eq:Bwd}
\end{align}
where $W$ is an  $\F$-Brownian Motion taking values in  $\R^r$, $X, Z$ are $\R^d$ processes adapted to $\F$, $Y$ is an $\F$-adapted process valued in $\R$, and $b: \R_+\times \R^d \rightarrow \R^d$, $\sigma: \R_+ \times \R^d \rightarrow  M (d,r)$, $f:\R_+\times \R^d \times \R \times \R^d \rightarrow \R$ and  $g:\R^d \rightarrow \R$  are Lipschitz function. 
In the sequel, we shall impose further regularity conditions for our theoretical analysis, see Assumptions \ref{H0} or \ref{H1} below.

 An important property of the solution of a Markovian BSDE is that it can be represented as $Y_t = u(t,X_t)$ and $Z_t =v(t,X_t)$ for suitable functions $u,v$ satisfying, at least in a viscosity sense, the PDE
 \begin{align}\label{eq de pde}
 \begin{cases}
 \partial_t u + \cL u + f(\cdot,u,v) = 0 & \text{ on } [0,T)\times \R^d
 \\
 u(T,\cdot) = g(\cdot) &
 \end{cases}
 ,
 \end{align}
 where $\cL$ is the Dynkin operator associated to the diffusion $X$.
 Moreover, when sufficient regularity is available, we have that $v(t,X_t) := \partial_x u(t,X_t) \sigma(t,X_t)$.

 Approximating $(Y,Z)$ allows then to solve numerically and in a probabilistic way, the corresponding PDE for $u$. 
 This has motivated in the past fifteen years an important literature on numerical methods for BSDEs. 
 The main method to approximate \eqref{Eq:Bwd} is a backward programming algorithm based on an Euler scheme, that has been introduced in \cite{briand_donsker-type_2001}  and \cite{bouchard_discrete-time_2004, zhang_numerical_2004}, see the references therein for early works. 
 Since then, many extensions have been considered: high order schemes e.g. \cite{chassagneux_linear_2014,chassagneux_rungekutta_2014}, schemes for reflected BSDEs \cite{bally_quantization_2003,chassagneux_rate_2016}, for fully coupled BSDEs \cite{delarue_forward-backward_2006-1, bender_time_2008}, for quadratic BSDEs \cite{chassagneux_numerical_2016} or McKean-Vlasov BSDEs \cite{chaudru_de_raynal_cubature_2015,chassagneux_numerical_2017}. 
 It is also important to mention that, quite recently, very promising probabilistic forward methods have been introduced to approximate \eqref{Eq:Bwd}  \cite{briand_simulation_2014} or directly the non-linear parabolic PDE \eqref{eq de pde} \cite{henry-labordere_numerical_2014-1}.
The backward algorithm approximating \eqref{Eq:Bwd} requires, to be fully implementable, a good approximation of \eqref{Eq:Fwd} and its associated conditional expectation operator. 
Various methods have been developed, see e.g. 
\cite{crisan_monte_2010,pages_improved_2015,gobet_regression-based_2005}
 and we will focus here on the cubature on Wiener spaces introduced in \cite{lyons_cubature_2004}. 
Broadly speaking, the method considers the space of continuous paths in $\R^d$ on the interval $[0,T]$ ($C([0,T], \R^d)$) to define a finite probability $\Q$ that approximates the Wiener law $\P$. 
As we explain briefly in Section \ref{Sec:Cubature}, see  a full account in \cite{lyons_cubature_2004}, this approximation is chosen to match the expectation of iterated integrals. 

The paper  \cite{crisan_solving_2012} pioneered the use of the cubature method to solve BSDEs. 
Essentially, the algorithm estimates the value of the  field $u$ on the points on the support of the cubature approximating law,  thus giving an approximation scheme for the solution of the BSDE \eqref{Eq:Bwd}.
By its nature, this algorithm can be easily used to implement second order discretization schemes as in  \cite{chassagneux_linear_2014,crisan_second_2014}, and applied in the context of McKean-Vlasov BSDEs as in \cite{chaudru_de_raynal_cubature_2015}. 
The cubature algorithm has been studied under a set of assumptions that guarantee sufficient regularity for the  field (for example, smooth coefficients for the forward equation and the generator of the backward equation, Lipschitz regularity on the boundary condition plus  a structural condition of the type UFG, see Section \ref{Subsec:Main Assumptions} below). 

In this work, we want to study acceleration methods for the Euler approximation of BSDEs, of the same kind as those proved in the linear case $(f=0)$ \cite{talay_expansion_1990} (see also  \cite{gobet_error_2007} for the study of the discrete-time error only in the non-linear case). 
We show that under a very mild strengthening  of the assumptions,  there exists an explicit error expansion for the weak approximation of the BSDE  system given by equations \eqref{Eq:Fwd} and \eqref{Eq:Bwd} using the cubature on Wiener spaces method. 
The explicit expansion exposes the dependence of the error approximation with respect to the general features of the coefficients of the system and the test function. 
Moreover, it opens the possibility to increase the rate of  convergence by using Richardson-Romberg extrapolation techniques. 
However, to effectively improve the efficiency of the algorithm, we need to analyse and improve the complexity growth of the approximation technique. In this work, we consider a technique based on projecting on a finite grid. 


We now present the setup of our work and the numerical schemes
we study in the sequel, and give an overview of the main results of the paper.

\subsection{Main Assumptions}
\label{Subsec:Main Assumptions}

Let us rewrite the process $X$ in \eqref{Eq:Fwd} in its Stratonovich form, that is, let
\[ X_t = x_0 + \int_0^t \bar{b}(s,X_s) ds + \int_0^t \sigma(s,X_s) \circ \ud W_s   \]
where $\bb:\R_+\times \R^d\rightarrow \R^d$ has  i-th component  defined by
\begin{equation}
\bb_{i}(t,x) = b_{i}(t,x) -\frac{1}{2} \sum_{j=1}^r \sum_{k=1}^d  \sigma_{k,j}(t,x) \partial_{x_k} \sigma_{i,j}(t,x).  \label{Eq:b_stratonovich}
\end{equation}

We work under two sufficient assumptions  to guarantee the regularity needed for the cubature method to be effective.

\begin{Assumption}
Let  $M>0$.
\begin{itemize}
\item On the forward coefficients
\begin{enumerate}[i.]
\item $\bar{b}, \sigma_{.,j} \in  C_b^{M+1}(\R^+\times\R^d,\R^d)$ for all $j= 1, \ldots, d$;
\end{enumerate}
\item On the backward coefficients
\begin{enumerate}[i.]
\item $g \in C^{M+1}_b (\R^d, \R)$
\item $f \in C^{M+1}_b (\R^+ \times \R^d \times \R \times \R^d, \R)$.
\end{enumerate}
\end{itemize}
\label{H0}
\end{Assumption}

\begin{Assumption}
Let  $M>0$. There exist $\alpha>0$, such that
\begin{itemize}
\item On the forward coefficients:
\begin{enumerate}[i.]
\item $\bar{b} \in C_b^{M+\alpha+1}(\R^+\times\R^d,\R^d)$ and $\sigma_{.,j} \in  C_b^{M+\alpha+2}(\R^+\times\R^d,\R^d)$ for all $j= 1, \ldots, d$;
\item UFG condition of order $\alpha$ (see Definition 1.1. in \cite{crisan_sharp_2012})
\end{enumerate}
\item On the backward coefficients
\begin{enumerate}[i.]
\item $g$ is Lipschitz continuous.
\item $f \in C^{M}_b (\R^+ \times \R^d \times \R \times \R^d, \R)$
\end{enumerate}
\end{itemize}
\label{H1}
\end{Assumption}

The existence of a classical solution to the PDE \eqref{eq de pde} is assured under either of Assumptions \ref{H0} or \ref{H1}.

\subsection{Forward scheme}

We define a stochastic process on $C_{bv}^{0}([0,T],\R^d) $ -- the space of continuous functions with bounded variation -- as follows.  Let $\hX: \hat{\Omega}  \rightarrow  C_{bv}^{0}([0,T],\R^d) $ be given as the solution of
\begin{equation}
 \hX_t(\homega) := x_0 +  \int_0^t \bb(s,\hX_s(\homega))ds + \int_0^t \sigma (s, \hX_s(\homega) )   \ud \homega_s. 
 \label{Def:XCub}
 \end{equation}
 The integrals in the previous definition are taken in the Riemann-Stieltjes sense, which is possible since we have assumed that the paths $\homega$ are of bounded variation.  
 Let us also define a conditioned form of this stochastic process, given by
\begin{equation}
 \hX_t^{s,x}(\homega) := x +  \int_s^t \bb(r,\hX_r^{s,x}(\homega) )dr + \int_s^t \sigma (r, \hX_r^{s,x}(\homega) )   \ud \homega_{r}. 
 \label{Def:XCubCond}
 \end{equation}

We take as weak approximation of the process $X$ the process $\hat{X}$ under a finite cubature measure $\hat{\Q}$ (see the precise definition in Section \ref{Sec:Cubature} below). 
In other words, we consider a random process in a finite space obtained by solving a finite number of ODEs. For practical implementation, the resulting discrete measure is built as a tree. 
This allows an easy computation of conditional expectations, a property that  is of paramount importance to solve the Backward component that we introduce below.

\vspace{5pt} The precision of approximation provided by the cubature method is given in terms of its order:  it quantifies the degree of iterated integrals that can be  perfectly computed in expectation under the cubature measure. 
Roughly speaking, this is analogous to the maximal degree of polynomials perfectly approximated by quadrature rules in finite dimensional spaces.

\subsection{Backward scheme}
\label{subse de backward scheme}

As mentioned before, in the Markovian setting it is possible to express the solution to the BSDE equation \eqref{Eq:Bwd} in terms of the so-called \emph{decoupling field}, that is, applications $u: \R_+, \R^d \rightarrow \R$ and $v: \R_+, \R^d \rightarrow \R^d$ defined by \[u(t,x)= Y^{t,x}_t ;\qquad v(t,x)=Z^{t,x}_t.\]

For $\gamma \geq 1$, we consider a time grid of the form 
\begin{equation} 
t_i = T\mbra{1-\mpar{1-\frac{i}{n} }^\gamma}
\label{Eq:DefTk}
\end{equation}
and we set $h_{i}:= t_{i+1}-t_i$ for $i=0,\ldots, n$.  We study a cubature based  {Bouchard-Touzi-Zhang scheme} defined by
\begin{itemize}
\item[(i)] Terminal condition is $(\hu_n,\hv_n)=(g,0)$\\
\item[(ii)] Transition from step $i+1$ to step $i$ given by
\begin{align*}
\hu_i(x) &= \E^{\hQ} \mbra{\hu_{i+1}(\hat{X}_{t_{i+1}}^{t_i,x})  + h_i f(x,\hu_i(x),\hv_i(x))},
\\
\hv_i(x)&= \E^{\hQ}\mbra{\hu_{i+1}(\hat{X}_{t_{i+1}}^{t_i,x}) \frac{\Delta \homega_i}{h_i}},
\end{align*}
\end{itemize}
where $\Delta \homega_i= \homega_{t_{i+1}}-\homega_{t_i}$, and $\E^{\hQ}$ is the expectation with respect to the cubature measure. We then define $\hat{Y}_i=\hu_i(\hX_{t_i})$ and $\hZ_i=\hv_i(\hX_{t_i})$.

\subsection{Main results}

\subsubsection{Error expansion}

Our first result, Theorem \ref{Thm:ExpansionFwdCumulated} extends the results of \cite{talay_expansion_1990} on the Euler scheme for  weak approximations of SDEs to the case where the underlying numerical method is not the Monte Carlo method but the cubature on Wiener spaces.

\begin{Theorem}[Forward error expansion]\ 
Set $m\geq 3$. Suppose that Assumption \ref{H0} (resp. \ref{H1}) holds with $M\geq m+2$,  and take $\hQ$ to be a cubature measure from a cubature formula of order $m$ on a uniform (resp. decreasing) step grid with $\gamma=1$ (resp. $\gamma > m-1$).

Then, there is a constant $K$ such that, for all $i<n$,
\begin{equation}
|\E^{\hQ} \mbra{g (X^{t_i,x}_T)} - \E \mbra{g (X^{t_i,x}_T)} | \leq  K n^{ - \frac{m-1}{2}} \;.
\label{Eq:UniformCubBoundUniform} 
\end{equation} 

Moreover, if $M\geq m+3$ in  Assumption \ref{H0} (resp. \ref{H1})  {and the cubature is symmetric},  then
\begin{equation}
 |\E^{\hQ} \mbra{g (X^{t_i,x}_T)} - \E \mbra{g (X^{t_i,x}_T)}   - n^{ - \frac{m-1}{2}} \Psi^{lin}_{T}(t_i,x) | \leq K' n^{ - \frac{(m+1) \wedge \gamma }{2 } }  ,
 \label{Eq:CubExpansionFwd} 
\end{equation}
where the coefficient $\Psi^{lin}_{T}$ is given in Definition \ref{de psi-lin} below.

\label{Thm:ExpansionFwdCumulated}

\end{Theorem}

Our main result, see Theorem \ref{Thm:MainExpansionBwd} below, carries out  a similar analysis taking into account the non-linearity associated to the generator function $f$ in the formulation of BSDEs. 
Let us stress that this analogous result is obtained on a \emph{completely implementable} scheme to solve BSDEs, that is, we include the analysis on the conditional expectation approximation.

\begin{Theorem}[Backward error expansion]\  \label{Thm:MainExpansionBwd}
Suppose that Assumption \ref{H0} (resp. \ref{H1}) holds with $M\geq 9$,  and take $\hQ$ to be a cubature measure from a symmetric cubature formula of order $m\geq 3$ on a uniform (resp. decreasing) step grid with $\gamma=1$ (resp. $\gamma > m-1$). Then, for all $i<n$,
\begin{align}
 \left|  \hat{u}_{t_i}(x) - u(t_i,x) +n^{-1} \Psi^{nl}_{T}(t_i,x) \right|  \leq K n^{-2}\,,
\end{align}
where the coefficient $\Psi^{nl}_{T}$ is given in Definition \ref{de psi-nonlin} below.
 \end{Theorem}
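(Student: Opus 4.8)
The plan is to transfer the forward error expansion of Theorem~\ref{Thm:ExpansionFwdCumulated} through the backward induction, treating the non-linearity via a Gronwall-type stability argument. First I would set up the global error decomposition $\hu_i(x) - u(t_i,x) = \errYi{} $ and split it into three pieces at each time step: (a) the \emph{cubature/forward weak error} arising from replacing $\E$ by $\E^{\hQ}$ in the transition, which by Theorem~\ref{Thm:ExpansionFwdCumulated} applied to the relevant test functions (here $\hu_{i+1}$ rather than a fixed $g$, so one needs uniform-in-$n$ bounds on the $C_b^{M}$ norms of the iterates $\hu_{i+1}$ — this is where Assumption~\ref{H0}/\ref{H1} with $M\ge 9$ is consumed) contributes the leading $n^{-1}\Psi^{lin}$-type term at each slice plus an $O(n^{-2})$ remainder; (b) the \emph{time-discretization error} of the one-step BTZ scheme relative to the true semigroup, i.e. the error in approximating $\int_{t_i}^{t_{i+1}} f(s,X_s,Y_s,Z_s)\,ds$ by $h_i f(x,\hu_i(x),\hv_i(x))$ — a standard $\theta$-scheme / implicit Euler local error that is $O(h_i^2)$ pointwise after using the regularity of $u$ and $v$ (classical solution of \eqref{eq de pde}); and (c) the propagated error $\E^{\hQ}[\hu_{i+1}(\hX^{t_i,x}_{t_{i+1}})] - \E[u(t_{i+1},X^{t_i,x}_{t_{i+1}})]$ from the previous step. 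Summing (c) along the induction with the Lipschitz contraction coming from $f$ gives the discrete Gronwall bound.

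Second, to get not just the $n^{-1}$ \emph{rate} but the explicit \emph{coefficient} $\Psi^{nl}_T$, I would linearise: write the difference $\hu_i - u(t_i,\cdot)$ as $n^{-1}$ times a function $\psi_i$ plus $o(n^{-1})$, plug this ansatz into the transition \eqref{Eq:DefTk}-type recursion, and collect the $n^{-1}$ terms. This produces a \emph{linear} backward recursion for $\psi_i$ driven by (i) the forward expansion coefficient $\Psi^{lin}$ (its generator-free contribution, transported by the linearised flow), and (ii) the source terms created by the local truncation error of the $f$-integration and by the $\hv$-approximation. The limiting object $\Psi^{nl}_T$ should then be identified as the solution of a linear parabolic PDE (a Feynman–Kac representation) with the linearised operator $\partial_t + \cL + \partial_y f + \partial_z f \cdot(\cdot)$ obtained by differentiating \eqref{eq de pde} along $u$, and with an explicit inhomogeneity built from the derivatives of $f,b,\sigma,g$. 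This matches the structure of $\Psi^{lin}$ in the $f=0$ case and is what Definition~\ref{de psi-nonlin} will record.

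Third, the technical backbone is controlling the $Z$-component: the scheme's $\hv_i$ is a cubature analogue of a Malliavin-weight / finite-difference representation of $\partial_x u\,\sigma$, and its error does not in general benefit from an extra factor $h_i^{1/2}$ for free. Here I would invoke the regularity estimates available under the UFG/$C_b$ assumptions (the gradient bounds for the cubature semigroup from \cite{crisan_sharp_2012} and the decreasing-step grid with $\gamma>m-1$ used precisely to absorb the blow-up of high-order derivatives near $T$ when $g$ is only Lipschitz in the Assumption~\ref{H1} case), to show $|\hv_i(x) - v(t_i,x)| \le K(n^{-1} + \text{(smooth coefficient)})$ with the coefficient itself admitting an $n^{-1}$-expansion. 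This is the step I expect to be the main obstacle: one must simultaneously carry an expansion for $\hu$ and an expansion for $\hv$, since they are coupled at every time step through $f(x,\hu_i(x),\hv_i(x))$, and the $\hv$-expansion requires differentiating the forward error expansion of Theorem~\ref{Thm:ExpansionFwdCumulated} in the initial space variable $x$ — which is legitimate only because we assumed two extra derivatives ($M\ge m+3$, here subsumed in $M\ge 9$) so that $\Psi^{lin}_T(t_i,\cdot)$ is itself $C^1$ with controlled norm.

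Finally, I would assemble the pieces: the discrete Gronwall inequality upgrades the pointwise-per-step bounds into the global bound $|\hu_i(x)-u(t_i,x)| \le K n^{-1}$, the linearised recursion identifies the $n^{-1}$ coefficient as $-\Psi^{nl}_T(t_i,x)$, and the remainder estimate from steps (a)--(c) — each $O(n^{-2})$ per slice, $n$ slices, with a summable weight from the contraction and from the $\gamma$-grid — yields the stated $O(n^{-2})$ error. Throughout, one checks that all constants are uniform in $i<n$, using that the iterated fields $\hu_{i+1}$ inherit uniform $C_b^{M-\text{(something)}}$ bounds from the one-step smoothing of the cubature operator, which is the reason the hypothesis is stated with the comfortable margin $M\ge 9$ rather than a sharp value.
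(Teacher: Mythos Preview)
Your strategy has a genuine gap at its load-bearing step. You propose to apply the forward expansion (or the one-step cubature bound) with the scheme iterates $\hu_{i+1}$ as test functions, and you explicitly say this requires ``uniform-in-$n$ bounds on the $C_b^{M}$ norms of the iterates $\hu_{i+1}$'', to be obtained from ``the one-step smoothing of the cubature operator''. But the cubature operator $Q_{t_i,t_{i+1}}$ is a finite weighted average along deterministic ODE flows: it has \emph{no} smoothing effect. Under Assumption~\ref{H1}, $g$ is only Lipschitz, so $\hu_n=g$ is not even $C^2$, and no subsequent $\hu_i$ will be either --- your step (a) fails outright. Even under Assumption~\ref{H0}, where $g$ is smooth, propagating high-order derivative bounds on $\hu_i$ backward through $n$ implicit steps with uniform constants is a nontrivial stability statement that you have not argued and that is not standard. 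The claim that ``$M\ge 9$ is consumed'' by this is therefore unsupported.

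The paper avoids this problem entirely by reversing the roles: instead of viewing $\hu_i$ as an approximation and needing its regularity, it writes the \emph{true} decoupling field $u(t_i,\cdot)$ as a perturbed version of the scheme, with perturbation $\zeta_i(x)=\varphi(t_i,x)h_i^2+\xi(t_i,x)h_i^3$ built explicitly from derivatives of $u$ (Lemma~\ref{Lem:OneStepControlBwd}). A second-order Taylor expansion of $f$ around $(u(t_i,\cdot),\tv_i)$ --- not around the scheme values --- then gives a linear recursion for $\Delta\hu_i$ with weight process $\hat\Lambda$ (Lemma~\ref{Lem:ErrorExpansionOneStep}), which iterates to a sum $\sum_j h_j^2\,\hat\Theta_{t_i,t_j}[\varphi(t_j,\cdot)]$ (Lemma~\ref{Lem:PreGlobal}). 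The final step replaces $\hat\Theta$ by the true linearized semigroup $\Theta$ via a one-step comparison (Lemma~\ref{Lem:OneStepTheta}, Proposition~\ref{pr interm nonlin exp 1}) and then passes from the Riemann sum to the integral defining $\Psi^{nl}_T$. At every stage the regularity burden sits on $u$, where Proposition~\ref{Prop:u_reg} supplies the needed bounds; the scheme iterates $\hu_i$ are never differentiated. Your intuition that the leading coefficient is a Feynman--Kac object for the linearised operator $\partial_t+\cL+\partial_y f+\partial_z f\cdot(\cdot)$ is correct --- this is exactly what $\Theta$ encodes --- but the route to it does not go through regularity of $\hu$, nor through differentiating Theorem~\ref{Thm:ExpansionFwdCumulated} in $x$ for the $\hv$-control; the $Z$-error is handled by introducing the intermediates $\tv_i,\brv_i$ and explicit Stratonovich--Taylor expansions.
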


 { \begin{Remark}
In both Theorems \ref{Thm:ExpansionFwdCumulated} and  \ref{Thm:MainExpansionBwd}, the role of the parameter $\gamma$ under Assumption \ref{H1}) is to allow for the needed regularisation of the boundary condition to take place. If $\gamma$ is taken to be smaller than the given bounds, we expect from the analysis to observe a suboptimal rate of convergence. In practice, for functions that are absolutely continuous this is not observed.
\end{Remark}
}

 { \begin{Remark}
It can be easily concluded from our development below that additional expansion terms in Theorems  \ref{Thm:ExpansionFwdCumulated} and  \ref{Thm:MainExpansionBwd} can be explicitly shown,  under stronger requirements on the parameters cubature parameter $m$, the regularity parameter $M$. Essentially, any additional term demands an increase of two on these parameters.
\end{Remark}
}
\subsubsection{Complexity reduction} 
\label{subsubse overview complexity}
With the above error expansion  at hand, one can implement a Romberg-Richardson method to increase the precision of the approximation profiting from the regularity of the value function $u$.
This is done by using an order $1$ method, here a modified Euler scheme, which is often easier to implement than a high order scheme and may exhibit  better numerical stability properties (see \cite{chassagneux_numerical_2015}  for a study of numerical stability in the case of BSDEs approximation).
With an increased precision, one can hope to lower the numerical complexity of the method. However, this is not the case here if one simply uses the cubature tree to compute the conditional expectation. 

Indeed, the main drawback of the cubature method is its complexity growth with respect to the number of time discretization steps on the approximation. 
In the case of approximation of expectations, complexity can be controlled using  reduction techniques as high order recombination \cite{litterer_high_2012} and TBBA (Tree Based Branching Algorithm) \cite{crisan_minimal_2002}. 

To make the extrapolation method worth implementing in practice, we introduce an extra projection on a finite grid. In Section \ref{se complexity}, we show, in an abstract setting of interpolation operators, how the backward scheme should be modified. 
We also exhibit sufficient conditions that the interpolation operators should satisfy to extend the convergence results to the modified scheme. 
We illustrate this findings by considering the example of multi-linear interpolation. 
Theorem \ref{th linear projection} proves the gain in complexity by using this approach.

\vspace{5pt}
The rest of the paper is organised as follows. In Section \ref{se forward}, we study the approximation of the BSDE in the linear case, associated to the cubature method. In Section \ref{Sec:BwdAnalysis}, we obtain an error expansion for the general case $f\neq 0$. In Section \ref{se complexity}, we focus on the complexity reduction via the use of interpolation operators and extrapolation methods and give a numerical illustration of our result. Finally, the Appendix collects some useful results  on integral approximation by Riemann sums and reviews the numerical implementation.

\section{Convergence analysis for the forward process}
\label{se forward}

We first provide an error extrapolation for the BSDEs when $f=0$. 
This result is new in the context of cubature method and terminal condition with Lipschitz only regularity. 

\subsection{Notation}

In this section, we highlight the notation we use in all the document and that might not be considered completely standard.

\subsubsection{Multi-indices}
\label{subsec:multiindex}
Multi-indices allow to easily manage differentiation and integration in several dimensions. 
Let us consider the set of multi-indices
\begin{equation}\label{Eq:defM} 
\mc{M} =  \{ \emptyset \} \cup \bigcup_{l\in\NN^*} \ \{0,1,\ldots,d\}^l,
\end{equation}
where $\emptyset$ refers to the zero-length multi-index.  
We define ``$*$'' to be the natural concatenation operator, and we consider some norms in $\mc{M}$. For $\beta=(\beta_1,\ldots,\beta_l)$:
\[ |\beta|_{p}  = \sum_{i=1}^l |\beta_i|^p , \text{ for } p\in \NN ; \qquad |\beta|_{\infty} = \max_{i} |\beta_i|,\]
and
 \[  \|\beta\| := (\#\{i:\beta_i =0\}) + |\beta|_0.\] 
 
 Naturally $|\emptyset|_p= \|\emptyset\|=0$. 
 For every $\beta\neq \emptyset$, we set $\beta^{>0}$ the multi-index obtained by deleting the zero components of $\beta$, and
 \begin{align}
\label{Def:beta_ops}  
\beta_{i:j}&:=  (\beta_i,\ldots,\beta_j)  \  \text{ for } 1\leq i<j\leq|\beta|_0; \\
 -\beta &:= (\beta_2,\ldots,\beta_l) \qquad  \beta-  := (\beta_1,\ldots,\beta_{l-1}). \notag
 \end{align}

We refer to the set of multi-indices of degree at most $l$ denoted by $\mc{A}_l:=\{\beta \in \mc{M}: \| \beta\|\leq l\}$, and to its \emph{frontier set}  \( \partial\mc{A}:=\{\beta \in \mc{M}\setminus\mc{A}: -\beta\in \mc{A}\}.\) 
It is readily seen that $\partial\mc{A}_l \subset \mc{A}_{l+2}\setminus \mc{A}_l$.\

\subsubsection{Stratonovich differential operators }

As we show in section \ref{Sec:Cubature} (see also  \cite{lyons_cubature_2004}),  the cubature on Wiener spaces method uses the algebraic structure of the differential operators associated to the Stratonovich integral.  
We introduce then a special notation for the Stratonovich operators associated to the SDE \eqref{Eq:Fwd} and their iterated actions. 

Let $\psi:\R_+\times \R^d \rightarrow \R$, then we define 
\begin{align*} 
V^\emptyset \psi(t,x) = & \psi(t,x) \\
V^{(0)} \psi (t,x) = & \sum_{i=1}^d \bb_i \partial_{x_i} \psi (t,x) + \partial_t \psi(t,x)  \\
 V^{(j)} \psi (t,x) = & \sum_{i=1}^d \sigma_{i,j} \partial_{x_i} \psi (t,x)   ;  j=1, \ldots, d 
 \end{align*}
 and for any multi-index $\beta \neq \emptyset$ with $|\beta|>1$, 
\[ V^{\beta}(t,x) = V^{(\beta_1)} [V^{-\beta} \psi](t,x) \]
with $-\beta$ as in \eqref{Def:beta_ops}.

\subsubsection{Space of differentiable functions}
We denote by $\bar{C}^m_b$ the class of differentiable functions $\psi:\R^d\rightarrow \R$ with bounded derivatives $V^\beta \psi $ for every $\beta \in \mc{A}_m$, i.e. functions whose  semi-norm defined by
\begin{equation}
|| \psi ||_{m,\infty} : =  \sup_{\beta \in \mc{A}_m} | V^\beta \psi |_{\infty}.
\label{Def:Seminorm}
\end{equation}
is finite. In the definition, $|\cdot|_{\infty}$ stands for the usual maximum norm. 

For convenience, for a fixed discretization grid, and $\psi:[0,T]\times\R^d\rightarrow \R$, we write 
\begin{equation}
|| \psi ||_{i;m,\infty} : =  \sup_{t\in [t_i,t_{i+1}]}\| \psi(t,.) \|_{m,\infty} .
\label{Def:Seminorm2}
\end{equation}

\subsubsection{Ito differential operators}
Let $\psi:\R_+\times \R^d \rightarrow \R$, then we define 
\begin{align*} 
L^{(0)} \psi (t,x) = & \partial_t \psi(t,x) + \sum_{i=1}^d b_i \partial_{x_i} \psi (t,x)   + \frac 1 2\sum_{i=1}^d\sum_{j=1}^d \sum_{k=1}^d \sigma_{ik}\sigma_{jk} \partial_{x_i x_j} \psi (t,x) .
 \end{align*}
Note that this operator can be written in terms of the Stratonovich differential operators we introduced, as follows
\[  L^{(0)} =  V^{(0)} + \frac{1}{2}\sum_{j=1}^d V^{(j,j)}. \]

\subsubsection{Iterated integrals} 

Let $\omega$ and $\zeta$ be two functions in $C_{\mathrm{bv}}([0,T],\R^d)$, the space of continuous functions with bounded variation defined from $[0,T]$ to $\R^d$ .
Let us define the iterated integral of $\zeta$ with respect to $\omega$ by
\[I^\beta_{s,t} (\zeta, \omega) := \int_{s<t_1<\ldots<t_{|\beta|}\leq t}  \zeta_{t_1} \ud\omega^{\beta_1}({t_1})  \cdots  \ud\omega^{\beta_{|\beta|}}({t_{|\beta|})}    \]
where $\omega^{i}$ is the i-th component of $\omega$, and we fix by convention $\omega^0(t):=t$.   In the following, we write $I^\beta_{s,t} (\omega)=I^\beta_{s,t}(1,\omega)$.

We introduce a similar notation to represent iterated integrals with respect to the Brownian motion. 
Indeed, let $X$ be an adapted process. We set
\[J^\beta_{s,t} (X):= \int_{s<t_1<\ldots<t_{|\beta| < t}} X_{t_1}  \circ \ud W^{\beta_1}_{t_1}  \cdots \circ \ud W^{\beta_{|\beta|}}_{t_{|\beta|}}    \]
where the notation $\circ$ denotes the Stratonovich integral, and we keep the convention $W^0_t = t$. Moreover, set $J^\beta_{s,t} :=J^\beta_{s,t}(1)$.

\subsubsection{Stochastic flow}
In what follows, for any $0\leq s\leq t <T$, we denote  by $X_t^{s,x}$ the process $X_t$ conditioned to be equal to $x$ at time $s<t$, i.e.
\[X_t^{s,x}=  x + \int_s^t b(r,X^{s,x}_r) \ud r + \int_s^t \sigma(r,X_r^{s,x}) \ud W_r .\]

\subsubsection{Operators}
We consider now a family of operators associated with the SDE \eqref{Eq:Fwd} and the process \eqref{Def:XCub}.
\begin{Definition}[Operators]
We denote by $P$ the operator over measurable functions associated to the diffusion $X$ defined, for any measurable function $\psi: [0,T]\times \R^d \rightarrow \R$ , by
\[ P_{s,t}\psi(s,x):= \E[\psi(t,X^{s,x}_t)].\]
Similarly, we define by $Q$ the analogous cubature operator given by
\[ Q_{s,t} \psi(s,x) := \E^{\hQ}[ \psi (t, \hat{X}^{s,x}_{t})].\]

By a slight abuse of notation,  we use this notation also for one-parameter functions, that is, if $\psi:\R^d\rightarrow \R$, $P_{s,t} \psi$ (respectively $Q_{s,t} \psi$) denotes the operator applied to the function $\bar{\psi}:[0,T]\times \R^d \rightarrow \R$ such that $(t,x )\rightarrow \bar{\psi}(t,x):= \psi(x)$.

\label{Def:Operators}

\end{Definition}

Note that, by Definition \ref{Def:Operators} and \eqref{Def:XCubCond}, we have
\[ Q_{s,t}Q_{t,u} \psi(s,x) = \E^{\hQ} [ Q_{t,u}  \psi(t,\hX^{s,x}_t) ] = \E^{\hQ} [ \psi(u,\hX_{u}^{t,\hX^{s,x}_t}) ] =    \E^{\hQ} [ \psi(u,\hX_{u}^{s,x}) ] = Q_{s,u} \psi(s,x).\]

\subsection{Cubature on Wiener spaces}
\label{Sec:Cubature}

The cubature measure on Wiener spaces was introduced in \cite{lyons_cubature_2004}, as a tool to construct weak approximations of functionals of the Brownian motion. 
It generalises the quadrature method to an infinite dimensional space: It aims at approximating the  Wiener measure restricted to a time interval $[0,T]$ with a finite probability defined on $C([0,T], \R^d)$. 
As in the quadrature method, this approximation consists in preserving the exact value of the expectation of some basic functionals that will play a similar role as the one played by polynomials in finite dimensions.

\begin{Definition}[Cubature formula \cite{lyons_cubature_2004}]
\label{Def:CubForm}
Let $m$ be a natural number. 
An $m$-cubature formula on the Wiener space $C^{0}([0,1],\R^d)$ is a probability measure ${\Q}$ with finite support on $C_{\mathrm{bv}}^{0}([0,1],\R^d)$ (continuous functions and bounded variation starting in 0) such that the expectation of the iterated Stratonovich integrals of degree $m$ under the Wiener measure and under the cubature measure ${\Q}$ are the same, i.e., for all multi-index $\beta \in \mc{A}_m$
\[
\E \mbra{J^{\beta}_{0,1}}  = \E^{{\Q}} \mbra{ I^{\beta}_{0,1}(\omega) }:= \sum_{j=1}^{\kappa}\theta_j \mbra{I^\beta_{0,1}(\omega_j)}
\]
where $\{\omega^1,\ldots,\omega^\kappa\}$ and $\theta_1,\ldots,\theta_\kappa$ are respectively the support and weights of the finite measure ${\Q}$.
\end{Definition}

Examples of cubature formulas of order 3 and 5 are known for arbitrary dimensions. 
Higher order cubature methods (up to order 11) are also given for small dimensions, see e.g. \cite{lyons_cubature_2004,gyurko_efficient_2011}.

 Definition \ref{Def:CubForm} may be extended to an $m$-cubature formula on the Wiener space $C^{0}([0,t],\R^d)$, for an arbitrary $t>0$. 
Indeed,  the rescaling properties of the Brownian motion imply that $\omega^1,\ldots,\omega^\kappa$ and $\theta_1,\ldots,\theta_\kappa$  form an $m$-cubature formula on  $C^{0}_{\mathrm{bv}}([0,1],\R^d)$ if and only if $t^{1/2}\omega^1,\ldots,$ $t^{1/2}\omega_\kappa$ and $\theta_1,\ldots,\theta_\kappa$ form an $m$-cubature formula on  $C^{0}_{\mathrm{bv}}([0,t],\R^d)$.
This justifies the choice of giving the definition on the interval $[0,1]$.

\begin{Definition}[Symmetric cubature formula] \
\label{Def:SymCub}
We say that a cubature formula is symmetric if for any path $\omega^* \in \supp(\Q)$ then  $-\omega^* \in \supp(\Q)$ and $\Q[\omega = \omega^*] = \Q[\omega = -\omega^*] $. 
\end{Definition}

\begin{Remark}
The  properties of Brownian Motion imply that if $||\beta||=2k+1$ for some integer $k$, then $ \E \mbra{J^{\beta}_{0,1}}  = 0$.
Cubature formulas with the  symmetry properties will therefore approximate exactly iterated integrals of any odd degree.  {Thus, without loss of generality, we assume that any symmetric cubature measure is of odd order.}
\label{Rmk:Symmetry}
\end{Remark}

Lyons and Victoir \cite{lyons_cubature_2004} showed that the cubature method in the space  $C^{0}([0,t],\R^d)$ provides a weak approximation to the Brownian motion with an error bounded by some power of the time length of approximation $t$.  
The construction can then be iteratively applied on small intervals to obtain an approximation with a good control for a time interval $[0,T]$ with arbitrary length. This motivates the following definition.

\begin{Definition}[Cubature measure] \ 
\label{Def:Cubature}
Let $0=t_0 < \ldots< t_n = T$, and let $h_i = t_{i+1}-t_i$. Given a cubature formula $\tQ$ represented by a set of weights $\theta_1, \ldots, \theta_\kappa$ associated to a set of paths $\omega^1, \ldots, \omega^\kappa$ in $C^{0}_{\mathrm{bv}}([0,1],\R^d)$, we build the probability space  $(\hOmega,\hQ)$, where $\hOmega =  C^{0}_{\mathrm{bv}}([0,T],\R^d)$  and $\hQ$ is a finite measure with support on paths indexed by $\eta \in \mbrace{1,\ldots,\kappa}^n$ given by
\[ \hat{\omega}^\eta (t) = \sum_{i=1}^n  h_i^{1/2} \omega^{\eta_i}  \mpar{ \frac{(t_{i+1} \vee (t \wedge t_{i}) )-t_i}{h_i}} \one{t> t_{i-1}}  \]
with associated probability $\hat{\theta}_\eta = \theta_{\eta_1}\theta_{\eta_2}\cdots \theta_{\eta_n}$.
\end{Definition}

Let us emphasize that the cubature measure constructed according to Definition \ref{Def:Cubature} depends on the cubature formula $\tilde{\Q}$ and on the grid $\T = \{t_0, \ldots, t_n\}$.

\subsection{Forward Error expansion}

In this section we show an expansion for the approximation error of conditional expectations when using the cubature method coupled with an Euler scheme. 
The idea of the proof follows the well-known approach of combining a one-step expansion with a global stability property. 
Importantly, the analysis relies on a good regularity property of the function  being approximated, as required by the cubature operator.

In all our development $K,K',\ldots$ denote constants  that might depend on the parameters of the problem (i.e. $T,f,g,b,\sigma,x_0)$ or on cubature measure that we assume fixed,  but not on the parameters of the scheme. 
We use the convention that their value might change from line to line.

We start by recalling regularity properties of conditional expectation under our standing assumptions.
\begin{Proposition}[Regularity]  
Under Assumption \ref{H0}, there exists a constant $K$ such that for all for all $(t, x) \in [0,T]\times R^d$ and $0<k\leq M$, 
\begin{equation} 
\left\|\E\mbra{g(X_T^{t,x})} \right\|_{k,\infty} \leq K |g(x)|_{k,\infty}.
\label{Eq:lin_u_SB}
\end{equation}
Under  Assumption \ref{H1}, there exists a constant $K$ such that for all for all $(t, x) \in [0,T]\times R^d$ and $0<k\leq M$,
\begin{equation} 
\left\| \E\mbra{g(X_T^{t,x})} \right\|_{k,\infty} \leq K |g|_{\mathrm{Lip}}(T-t)^{ - \frac{k-1}{2}}.
\label{Eq:lin_u_UFG_LB}
\end{equation}
If  in addition $g$ has bounded derivatives up to order $p$, then
\begin{equation} \left\| \E\mbra{g(X_T^{t,x})} \right\|_{k,\infty} \leq K |\nabla^p g|_{\infty}(T-t)^{ - \frac{k-p}{2}}.
\label{Eq:lin_u_UFG_SB}
\end{equation}
\label{Prop:lin_u_reg}
\end{Proposition}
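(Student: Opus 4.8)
The plan is to prove all three estimates by the same mechanism: differentiating the semigroup $P_{t,T}g = \E[g(X_T^{t,\cdot})]$ and estimating the iterated Stratonovich operators $V^\beta P_{t,T}g$ uniformly in the endpoint, exploiting the smoothing property of the diffusion in the UFG/Hörmander case and direct differentiation in the smooth case. I would set $u(t,x) := \E[g(X_T^{t,x})]$ and recall that $V^\beta u(t,x)$ for $\beta \in \mc{A}_k$ is precisely what enters the seminorm $\|u(t,\cdot)\|_{k,\infty}$ via \eqref{Def:Seminorm}.

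\textbf{Smooth case \eqref{Eq:lin_u_SB}.} Under Assumption \ref{H0}, $\bb$, $\sigma_{\cdot,j}$ and $g$ all lie in $C_b^{M+1}$, so the flow $x \mapsto X_T^{t,x}$ is $M+1$ times differentiable with bounded derivatives, uniformly in $t \in [0,T]$, by standard SDE flow estimates (e.g. Kunita). The first step is to commute differentiation with the expectation: each $V^\beta$ with $\|\beta\| \le k \le M$ applied to $u$ can be expanded, via the chain rule, as a finite sum of terms $\E[(\partial^\alpha g)(X_T^{t,x}) \cdot (\text{products of flow derivatives})]$ with $|\alpha| \le k$; all flow-derivative factors are bounded uniformly, and $|\partial^\alpha g| \le \|g\|_{k,\infty}$ (indeed $\|g\|_{k,\infty}$ controls the $V^\beta g$ which in turn control all ordinary derivatives up to the corresponding order since the $V^{(j)}$ are elliptic-direction-spanning combinations of $\partial_{x_i}$; one may need to invert the linear change of frame, picking up factors of $\|\sigma\|$, absorbed in $K$). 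Taking sup over $\beta \in \mc{A}_k$ and over $(t,x)$ gives \eqref{Eq:lin_u_SB}. This case is routine.

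\textbf{UFG/Hörmander case \eqref{Eq:lin_u_UFG_LB}, \eqref{Eq:lin_u_UFG_SB}.} Here $g$ is only Lipschitz (resp. $C^p_b$), so we cannot differentiate $g$; instead we must put all derivatives on the heat kernel. The key input is the gradient bound from Crisan–Delarue / Kusuoka theory under the UFG condition of order $\alpha$ (cited as \cite{crisan_sharp_2012}): for a Lipschitz function $\varphi$, $\|V^\beta P_{t,T}\varphi\|_\infty \le K |\varphi|_{\mathrm{Lip}} (T-t)^{-(\|\beta\|-1)/2}$, and for $C^p_b$ data $\|V^\beta P_{t,T}\varphi\|_\infty \le K |\nabla^p\varphi|_\infty (T-t)^{-(\|\beta\|-\|\gamma\|)/2}$ after writing $\beta = \gamma * \delta$ and integrating by parts $p$ of the vector fields against the kernel (the degree drop is $\|\gamma\| = p$, which costs $(T-t)^{-(\|\beta\|-p)/2}$). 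Since $\|u(t,\cdot)\|_{k,\infty} = \sup_{\beta\in\mc{A}_k}|V^\beta u|_\infty$ and $\mc{A}_k$ is finite, taking the worst $\beta$ (with $\|\beta\| = k$) gives the stated exponents $-(k-1)/2$ and $-(k-p)/2$ respectively. One must check $T-t>0$; at $t=T$ the bounds are vacuous/trivial.

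\textbf{Main obstacle.} The genuine difficulty is entirely concentrated in the UFG case: obtaining the sharp short-time gradient estimates with the correct negative power of $T-t$ for iterated \emph{Stratonovich} operators $V^\beta$ (as opposed to plain derivatives), uniformly in the starting point. This is exactly the content of the Kusuoka–Stroock / Crisan–Litterer machinery under UFG, and the honest move is to invoke Definition 1.1 and the main estimates of \cite{crisan_sharp_2012} rather than reprove them; the remaining work — translating between the paper's multi-index bookkeeping ($\|\beta\|$, $\mc{A}_k$, $\partial\mc{A}$) and the cited estimates, and verifying the integration-by-parts degree drop for the $C^p_b$ refinement — is bookkeeping. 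I would also remark that the constant $K$ depends on $T$, the coefficients, and the UFG constant but not on $(t,x)$, which is what the statement claims.
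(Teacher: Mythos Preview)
Your proposal is correct and matches the paper's own treatment, which does not give a self-contained argument but simply attributes \eqref{Eq:lin_u_SB} to ``the iterated application of It\^o's formula'' and defers \eqref{Eq:lin_u_UFG_LB}--\eqref{Eq:lin_u_UFG_SB} to Corollaries 32 and 78 of \cite{crisan_cubature_2013} (with a pointer to \cite{nee_sharp_2011}). Your elaboration of the mechanism---flow differentiation in the smooth case, Kusuoka--Stroock gradient bounds under UFG, with the hard analysis entirely outsourced to those references---is exactly what the paper's citations unpack.
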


Equation \eqref{Eq:lin_u_SB} is a consequence of the iterated application of Ito's formula, under Assumption \ref{H0}.  
The claims under the UFG condition are proved in  \cite{crisan_cubature_2013}: \eqref{Eq:lin_u_UFG_LB} is proved in Corollary 78 and \eqref{Eq:lin_u_UFG_SB}  is deduced from extending the arguments of Corollary 32 with   Corollary 78. The reader may refer to the PhD thesis \cite{nee_sharp_2011} for further results on gradient bounds under alternative conditions.

\subsubsection{One-step expansion}

The following approximation result is a restatement of  the results  in \cite{lyons_cubature_2004} (see also  Section 3.4 in \cite{crisan_cubature_2013} ) .

\begin{Proposition}
Under Assumption \ref{H0} (resp. \ref{H1}), let $\psi: [0,T]\times \R^d \rightarrow \R$ be a bounded function in $\bar{C}^{m+2}_b$ uniformly in $t$.
Then, for any $i\leq n-1$ (resp. $i\leq n-2$),

\begin{align} 
|Q_{t_i,t_{i+1}}& \psi (t_i,x) - P_{t_i,t_{i+1}}\psi(t_i,x)  |  \label{Eq:OneStepCubBound} \\
& \leq K_{m+1} h_{i}^{ \frac{m+1}{2}} ||\psi ||_{i;m+1,\infty} + K_{m+2} h_{i}^{ \frac{m+2}{2}} ||\psi ||_{i;m+2,\infty},  \notag
\end{align}
where $||.||_{i,.,\infty}$ is defined in \eqref{Def:Seminorm2} and   $K_j = \sum_{\beta\in \mc{A}_j \setminus \mc{A}_{j-1} }|C_\beta|$, with $C_\beta = \E^{\hQ} I^\beta_{0,1} - \E J^\beta_{0, 1}$. 

If in addition $\psi \in \bar{C}_b^{m+3}$, 
\begin{align}
& \left |Q_{t_i,t_{i+1}}\psi(t_i,x) - P_{t_i,t_{i+1}}\psi(t_i,x)  - h_{i}^{ \frac{m+1}{2}}\sum_{\beta \in \mathcal{A}_{m+1} \setminus \mc{A}_m } C_\beta V^{\beta} \psi(t_i,x)    \right |    \label{Eq:OneStepCubExactNoSym}\\
& \quad\qquad \leq K_{m+2} h_{i}^{ \frac{m+2}{2}}  ||\psi ||_{i;m+2,\infty}  + K_{m+3} h_{i}^{ \frac{m+3}{2}}  ||\psi||_{i;m+3,\infty}. \notag
\end{align}

Moreover, if  $m$ is odd,  the cubature measure $\Q$ is symmetric and $\psi \in \bar{C}_b^{m+4}$, then 
\begin{align}
& \left |Q_{t_i,t_{i+1}}\psi(t_i,x) - P_{t_i,t_{i+1}}\psi(t_i,x)  - h_{i}^{ \frac{m+1}{2}}\sum_{\beta \in \mathcal{A}_{m+1} \setminus \mc{A}_m } C_\beta V^{\beta} \psi(t_i,x)    \right |    \label{Eq:OneStepCubExact}\\
& \quad\qquad \leq K_{m+3}   h_{i}^{ \frac{m+3}{2}}  ||\psi ||_{i;m+3,\infty} + K_{m+4} h_{i}^{ \frac{m+4}{2}}  ||\psi ||_{i;m+4,\infty} . \notag
\end{align}
\label{Prop:OneStepCub}
\end{Proposition}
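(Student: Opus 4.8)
The plan is to reduce everything to a single Taylor-type expansion of $\psi(t_{i+1}, X^{t_i,x}_{t_{i+1}})$ and of $\psi(t_{i+1}, \hat X^{t_i,x}_{t_{i+1}})$ in terms of iterated Stratonovich integrals, following the classical stochastic Taylor (Kloeden--Platen / Lyons--Victoir) expansion. First I would apply the Stratonovich--Taylor formula to $P_{t_i,t_{i+1}}\psi(t_i,x) = \E[\psi(t_{i+1}, X^{t_i,x}_{t_{i+1}})]$: writing $s = h_i$ and expanding along multi-indices, one gets
\[
\psi(t_{i+1}, X^{t_i,x}_{t_{i+1}}) = \sum_{\beta \in \mc{A}_{k}} V^\beta \psi(t_i,x)\, J^\beta_{t_i,t_{i+1}} + \sum_{\beta \in \partial\mc{A}_k} R_\beta,
\]
where the remainder terms $R_\beta$ involve $V^{\beta}\psi$ evaluated along the path and iterated integrals of order $\|\beta\|$; since $\beta \in \partial\mc{A}_k$ satisfies $\|\beta\| \in \{k+1, k+2\}$ and $\E|J^\beta_{t_i,t_{i+1}}|$ is of order $h_i^{\|\beta\|/2}$ (by Brownian scaling and boundedness of the integrand, using the regularity of $\psi$ from the $\bar C_b^{k+\cdot}$ hypothesis), these remainders are $O(h_i^{(k+1)/2} \|\psi\|_{i;k+1,\infty}) + O(h_i^{(k+2)/2}\|\psi\|_{i;k+2,\infty})$. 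Taking $\E$ and using that under $\hQ$ the analogous pathwise expansion holds with $I^\beta_{t_i,t_{i+1}}(\homega)$ in place of $J^\beta_{t_i,t_{i+1}}$ (this is exactly how $\hat X$ is defined in \eqref{Def:XCubCond}, the Riemann--Stieltjes chain rule generating the same algebraic expansion), taking $\E^{\hQ}$ as well and subtracting, the matched terms with $\|\beta\| \le m$ cancel because $C_\beta = \E^{\hQ} I^\beta_{0,1} - \E J^\beta_{0,1} = 0$ there (Definition \ref{Def:CubForm}, rescaled to $[0,t_i,t_{i+1}]$ via the scaling paragraph after Definition \ref{Def:CubForm}); the $\|\beta\| = m+1$ terms survive with coefficient $h_i^{(m+1)/2} C_\beta V^\beta\psi(t_i,x)$ after scaling; and everything of higher order goes into the remainder. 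This gives \eqref{Eq:OneStepCubBound} (keeping the $m+1$ terms inside the bound via $|C_\beta| \le K_{m+1}$) and \eqref{Eq:OneStepCubExactNoSym} (isolating them, requiring one extra order of regularity $\bar C_b^{m+3}$ to push the remainder down to $h_i^{(m+2)/2}$).

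For the symmetric refinement \eqref{Eq:OneStepCubExact} I would push the expansion one order further, to $\mc{A}_{m+2}$, and invoke Remark \ref{Rmk:Symmetry}: when $\Q$ is symmetric, for every $\beta$ with $\|\beta\|$ odd one has both $\E^{\hQ} I^\beta_{0,1} = 0$ and $\E J^\beta_{0,1} = 0$, so $C_\beta = 0$. Since $m$ is odd, $m+2$ is odd, so the entire block of multi-indices with $\|\beta\| = m+2$ contributes nothing to the difference $Q - P$, and the first genuinely nonzero remainder block is $\|\beta\| = m+3$. This is why one gains a full order (from $h_i^{(m+2)/2}$ to $h_i^{(m+3)/2}$) at the cost of two extra derivatives ($\bar C_b^{m+4}$): one to run the expansion to order $m+2$ and control its $\partial\mc{A}_{m+2}$ remainder, which sits at orders $m+3$ and $m+4$.

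The main obstacle is the remainder control under Assumption \ref{H1}, where $\psi$ is only assumed $\bar C_b^{m+2}$ (in the UFG sense) and $g$ is merely Lipschitz, so the naive Taylor remainder estimates fail: the iterated-integral remainder terms $R_\beta$ for $\beta \in \partial\mc{A}_k$ involve $V^\beta\psi$ which need not be bounded. The fix is that in the BSDE scheme $\psi$ will always be $P_{t_{i+1},\cdot}$ applied to something, so by Proposition \ref{Prop:lin_u_reg} one has the blow-up bounds $\|V^\beta\psi\|_\infty \lesssim (T - t_{i+1})^{-(\|\beta\| - 1)/2}$; combined with the decreasing-step grid \eqref{Eq:DefTk} with $\gamma > m-1$, the products $h_i^{\|\beta\|/2}(T-t_{i+1})^{-(\|\beta\|-1)/2}$ remain summable. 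However, for the \emph{one-step} statement as phrased here I would simply state it with the $\|\cdot\|_{i;m+2,\infty}$-type seminorms on the right-hand side (as written), so that the blow-up is absorbed by the caller; the proof at this level only needs: (i) the algebraic identity that $\hat X$'s Riemann--Stieltjes expansion matches $X$'s Stratonovich--Taylor expansion term by term, (ii) the scaling $J^\beta_{t_i,t_{i+1}} \overset{d}{=} h_i^{\|\beta\|/2}$-weighted version and likewise for $I^\beta$, (iii) $L^\infty$ control of the remainder integrands by $\|\psi\|_{i;\cdot,\infty}$ along both the true diffusion and each finitely-many cubature path, and (iv) $C_\beta = 0$ for $\|\beta\| \le m$, plus (for the symmetric case) for $\|\beta\|$ odd. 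I would relegate the precise bookkeeping of which multi-indices appear in $\partial\mc{A}_k$ (using $\partial\mc{A}_l \subset \mc{A}_{l+2}\setminus\mc{A}_l$, already noted in the text) to a short lemma or cite \cite{lyons_cubature_2004, crisan_cubature_2013} directly, since this is exactly their one-step estimate.
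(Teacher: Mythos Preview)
Your proposal is correct and follows essentially the same approach as the paper: Stratonovich--Taylor expand both $P_{t_i,t_{i+1}}\psi$ and $Q_{t_i,t_{i+1}}\psi$ to order $\mc{A}_m$ (respectively $\mc{A}_{m+1}$, $\mc{A}_{m+2}$), cancel the matched terms via the cubature definition, bound the $\partial\mc{A}$-remainders using $\partial\mc{A}_l \subset \mc{A}_{l+2}\setminus\mc{A}_l$ together with the Brownian/cubature scaling $h_i^{\|\beta\|/2}$, and in the symmetric odd-$m$ case kill the $\|\beta\|=m+2$ block by Remark~\ref{Rmk:Symmetry}. Your observation that under Assumption~\ref{H1} the blow-up of $\|\psi\|_{i;\cdot,\infty}$ is absorbed by the caller (rather than handled inside this one-step lemma) is exactly how the paper organizes things as well.
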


 {Let us remark that the constants $C_\beta$ (and thus also the constants $K_{m+1},K_{m+2}$)  depend only on the chosen cubature method and are explicit. For instance, the canonical cubature of order 3 in dimension 1 (see Section \ref{Sec:NumericalTest} below )  gives $K_4 = 2, K_5=0$. }\\

\proof
From the Taylor-Stratonovich expansion   (see Theorem 5.6.1 in  \cite{kloeden_numerical_1992})  applied to $\psi$, one gets, for any $x\in \R^d$,
\[  P_{t_i,t_{i+1}}\psi(t_i,x) =  \E \mbra{\psi(t_{i+1},X_{t_{i+1}}^{t_i,x})} = \sum_{\beta \in \mathcal{A}_m} V^{\beta} \psi(t_i,x) \E J^\beta_{t_i, t_{i+1}} + \sum_{\beta \in \partial \mathcal{A}_m} \E J^\beta_{t_i,t_{i+1}}(   V^{\beta} \psi(.,X^{t_i,x}_.)  ).\]
Similarly,  a Taylor expansion and the definition of $\hX$ in \eqref{Def:XCub} shows
\[ Q_{t_i,t_{i+1}}\psi(t_i,x) = \E^{\hQ} \mbra{\psi(t_{i+1} ,\hX_{t_{i+1}}^{t_i,x})} = \sum_{\beta \in \mathcal{A}_m} V^{\beta} \psi(t_i,x) \E^{\hQ} I^\beta_{t_i,t_{i+1}} + \sum_{\beta \in \partial \mathcal{A}_m} \E^{\hQ} I^\beta_{t_i,t_{i+1}}(   V^{\beta} \psi(.,\hX^{t_i,x}_.) ).\]
The construction of the cubature measure in Definition \ref{Def:Cubature}  and the definition of the cubature formula, imply that $ \E^{\hQ} I^\beta_{t_i,t_{i+1}}  = \E J^\beta_{t_i, t_{i+1}} $ for all $\beta \in \mc{A}_m$. 
Hence,  by using the regularity assumptions on $\psi$, the scaling properties of the cubature and the Markov property, one gets 
\begin{align}  
[Q_{t_i,t_{i+1}} & - P_{t_i,t_{i+1}}]\psi(t_i,x)  \label{Eq:CubError} \\
=  & \sum_{\beta \in \partial \mathcal{A}_m} \E^{\hQ} I^\beta_{t_i,t_{i+1}}(   V^{\beta} \psi(.,\hX_.) ) -  \E J^\beta_{t_i,t_{i+1}}(   V^{\beta} \psi(.,X^{t_i,x}_.)  ) \notag\\ 
\leq &   \sum_{\beta \in \partial \mathcal{A}_m}  h_{i}^{ \frac{||\beta||}{2}}  \sup_{t\in[t_i,t_{i+1})} |V^{\beta} \psi (t,.)|_{\infty}  \mpar{ \E^{\hQ} I^\beta_{0,1} - \E J^\beta_{0,1}}
\notag\\
\leq &   K_{m+1} h_{i}^{ \frac{m+1}{2}} ||\psi||_{i;m+1,\infty}   +   K_{m+2} h_{i}^{ \frac{m+2}{2}} ||\psi ||_{i;m+2,\infty}
\notag
\end{align}
where for  $j=m+1,m+2$, \[K_j =  \sum_{\beta \in  \mathcal{A}_{j}\setminus \mc{A}_{j-1} }  \left| \E^{\hQ} I^\beta_{0,1} - \E J^\beta_{0,1} \right|.\]

Having shown \eqref{Eq:OneStepCubBound}, let us now assume $\psi \in \bar{C}_{b}^{m+3}$. 
Then we can expand the terms in $\mc{A}_{m+1}$  in the equality in \eqref{Eq:CubError}, and apply the scaling properties of the cubature to get
\begin{align*}
 \bigg| [Q_{t_i,t_{i+1}} & - P_{t_i,t_{i+1}}]  \psi(t_i,x)   - \sum_{\beta \in \mathcal{A}_{m+1} \setminus \mathcal{A}_m} h_{i}^{ \frac{||\beta||}{2}} V^{\beta} \psi(t_i,x) \mbrace{\E^{\hQ} I^\beta_{1} - \E J^\beta_{1}} \bigg| \\
 =  & \sum_{\beta \in \partial \mathcal{A}_{m+1}} \E^{\hQ} I^\beta_{t_i,t_{i+1}}(   V^{\beta} \psi(.,\hX_.) ) -  \E J^\beta_{t_i,t_{i+1}}(   V^{\beta} \psi(.,X^{t_i,x}_.)  ) \\ 
\leq & K_{m+2} h_{i}^{ \frac{m+2}{2}}   ||\psi||_{i,m+2,\infty} + K_{m+3} h_{i}^{ \frac{m+3}{2}} ||\psi ||_{i,m+3,\infty}.
\end{align*}

Finally, if $\psi$ is $m+4$ times differentiable, then we can expand once more and use the symmetry of the cubature formula and Remark \ref{Rmk:Symmetry} to obtain the result.

\eproof

\subsubsection{Global expansion: Proof of Theorem \ref{Thm:ExpansionFwdCumulated}}

Using the previous one-step results, we can study an explicit error expansion for the error in the cubature approximation of $X$ for several steps, thus extending \cite{talay_expansion_1990} to the cubature approximation. 
We analyze this property under both Assumptions \ref{H0} and \ref{H1}. We start with the following control result.

\begin{Lemma}
Suppose that either  Assumption \ref{H0}  or Assumption \ref{H1} hold with $M \geq m+2$. Assume that $(t_0, \ldots, t_n)$ are defined as in \eqref{Eq:DefTk}, with $\gamma=1$ if Assumption \ref{H0} holds and $\gamma \geq M-2$ otherwise.  Then, there is a constant $K$ such that, for all $0< k\leq M$,
\[\sum_{i=0}^{n-2}  h_i^{\frac{k}{2}}  ||  P_{.,t_n} g  ||_{i;k,\infty}  \leq K n^{ - \frac{k}{2} +1} .\] 
\label{Lem:ControlSumLinear}
\end{Lemma}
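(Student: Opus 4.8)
The plan is to bound the sum $\sum_{i=0}^{n-2} h_i^{k/2} \| P_{\cdot,t_n} g\|_{i;k,\infty}$ by combining the regularity estimates of Proposition \ref{Prop:lin_u_reg} with the explicit form of the grid \eqref{Eq:DefTk}. First I would note that $P_{\cdot,t_n}g$ solves (in the appropriate sense) the linear PDE, and that for $t\in[t_i,t_{i+1}]$ we have $\| P_{t,t_n} g\|_{k,\infty} = \| \E[g(X_T^{t,x})]\|_{k,\infty}$, so Proposition \ref{Prop:lin_u_reg} applies directly with $T-t$ replaced by $t_n - t \geq t_n - t_{i+1}$.

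Under Assumption \ref{H0}: here $\gamma=1$ so $h_i = T/n$ is uniform, and by \eqref{Eq:lin_u_SB} we have $\| P_{\cdot,t_n}g\|_{i;k,\infty}\leq K\|g\|_{k,\infty}$ uniformly in $i$. Hence
\[
\sum_{i=0}^{n-2} h_i^{k/2}\| P_{\cdot,t_n}g\|_{i;k,\infty} \leq K \|g\|_{k,\infty}\, n\, (T/n)^{k/2} = K' n^{1-k/2} = K' n^{-(k-2)/2},
\]
which is even slightly better than claimed for $k\geq 1$; in fact since the bound $n^{-(k-1)/2}$ is weaker, we are done. (One checks the exponent: $n \cdot n^{-k/2} = n^{-(k-2)/2} \leq n^{-(k-1)/2}$ fails — so actually under \ref{H0} the correct reading is that we only need $k\ge 1$ and the stated rate $n^{-(k-1)/2}$ must be obtained; I would re-examine whether the sum should be compared against $n^{-(k-1)/2}$ via dropping the last few terms or against the sharper $n^{-(k-2)/2}$. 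The safe route is to observe that the lemma as stated only asserts $\leq Kn^{-(k-1)/2}$, and since $n^{-(k-2)/2}$ is NOT $\leq n^{-(k-1)/2}$, the genuine content must come from the decreasing-step regime, and under \ref{H0} one instead uses that the relevant sums appearing downstream carry an extra $h_i$ factor; I would cross-check the exact normalization with how the lemma is invoked.)

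Under Assumption \ref{H1} (UFG): now $\gamma \geq m$ and the steps decrease near $t_n=T$. The key estimate is \eqref{Eq:lin_u_UFG_LB}: $\| P_{\cdot,t_n}g\|_{i;k,\infty} \leq K|g|_{\mathrm{Lip}}(t_n-t_{i+1})^{-(k-1)/2}$. Using $t_n - t_{i+1} = T(1-(i+1)/n)^\gamma$ and $h_i = t_{i+1}-t_i \leq C T \gamma n^{-1}(1-i/n)^{\gamma-1}$, the summand is bounded (up to constants) by
\[
\left(\gamma n^{-1}(1-i/n)^{\gamma-1}\right)^{k/2} \left((1-(i+1)/n)^{\gamma}\right)^{-(k-1)/2}.
\]
Setting $s_i = 1 - i/n$ and passing to a Riemann-sum comparison, this behaves like $n^{-k/2}\int_0^1 s^{(\gamma-1)k/2 - \gamma(k-1)/2}\,\mathrm{d}s \cdot n$, i.e. $n^{1-k/2}$ times an integral which converges precisely when the exponent of $s$ exceeds $-1$, namely when $\gamma(k-1)/2 - (\gamma-1)k/2 < 1$, i.e. $\gamma < k + 2$; since we may also invoke the sharper bound \eqref{Eq:lin_u_UFG_SB} when $g$ has more derivatives, and since $\gamma \geq m$ with $k\leq M$, the condition $\gamma \geq m$ together with the structure ensures integrability (handling the boundary terms $i=n-1$, excluded here since the sum stops at $n-2$, which is exactly why the truncation is there). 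Summing the Riemann approximation gives $\leq K n^{1-k/2} = Kn^{-(k-2)/2}$, and I would then reconcile this with the claimed $n^{-(k-1)/2}$ by a more careful splitting of the sum into the bulk $i \leq n/2$ (where $s_i \geq 1/2$ and one gains the full $n^{-k/2}$ per term over $O(n)$ terms) and the tail $i > n/2$ (where the singularity $(t_n-t_{i+1})^{-(k-1)/2}$ is active but $h_i$ is correspondingly small).

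The main obstacle I anticipate is the careful bookkeeping in the tail of the sum under Assumption \ref{H1}: near $i = n-1$ the regularity bound blows up like $(t_n - t_{i+1})^{-(k-1)/2}$, and one must verify that the decreasing step sizes $h_i^{k/2}$ — which vanish like $n^{-k/2}(1-i/n)^{(\gamma-1)k/2}$ — decay fast enough to compensate, uniformly in $k \leq M$. This is precisely where the hypothesis $\gamma \geq m$ (and implicitly $\gamma$ large relative to $M$, or else use of the finer gradient bound \eqref{Eq:lin_u_UFG_SB}) is used, and getting the exact exponent $n^{-(k-1)/2}$ rather than something slightly worse requires balancing the two regimes at the crossover index; I would handle this via the integral comparison lemma on Riemann sums promised in the Appendix.
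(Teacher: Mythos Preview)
Your approach is essentially the same as the paper's: invoke the regularity bounds of Proposition~\ref{Prop:lin_u_reg} and sum. Under Assumption~\ref{H0} the paper performs exactly the uniform-grid computation you wrote; under Assumption~\ref{H1} it does not split into bulk and tail but simply cites Corollary~\ref{Cor:ControlSum2} (the appendix Riemann-sum lemma) with $\ell=k/2$ and $\beta=(k-1)/2$, which is precisely the integral comparison you sketched.

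You are right that the exponent $n^{-(k-1)/2}$ does not follow from either computation: what both your argument and the paper's own proof actually yield is $n^{-(k-2)/2}$. Under Assumption~\ref{H0} one gets $(n-1)\,(T/n)^{k/2}\le K n^{-(k-2)/2}$, and under Assumption~\ref{H1} Corollary~\ref{Cor:ControlSum2} with $\ell=k/2$, $\beta=(k-1)/2$ gives $n^{-(\ell-1)}=n^{-(k-2)/2}$ (when $\gamma(\ell-\beta)>\ell-1$, i.e.\ $\gamma>k-2$). The stated bound $n^{-(k-1)/2}$ appears to be a typo in the lemma. It is harmless downstream: the lemma is applied in the proof of Theorem~\ref{Thm:ExpansionFwdCumulated} with $k=m+1$, $m+2$, etc., and the corrected bound $n^{-(k-2)/2}$ produces exactly the rates $n^{-(m-1)/2}$, $n^{-m/2}$, \dots\ that are claimed there. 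Your attempts to recover the extra $n^{-1/2}$ by splitting the sum or upgrading to \eqref{Eq:lin_u_UFG_SB} are therefore unnecessary; the discrepancy you flagged is real and lives in the statement, not in your argument.
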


\proof
When Assumption \ref{H0} holds, we have that $h_i= Tn^{-1}$ and from Proposition \ref{Prop:lin_u_reg},
\[\sum_{i=0}^{n-2}  h_i^{\frac{k}{2}}  ||  P_{.,t_n} g  ||_{i;k,\infty} \leq \sum_{i=0}^{n-2} K  n^{-\frac{k}{2}}  ||g (.) ||_{k,\infty}  \leq K' n^{ - \frac{k}{2} +1}.\] 

On the other hand, if  Assumption \ref{H1} holds, we have from  Proposition \ref{Prop:lin_u_reg} 
 {that $||  P_{.,t_n} g  ||_{i; k,\infty} \leq K |g|_{Lip} (T-t)^{-\beta}$ for $\beta = \frac{k-1}{2}$. Then, we get from Corollary \ref{Cor:ControlSum2},
\[\sum_{i=0}^{n-2}  h_i^{\frac{k}{2}}  ||  P_{.,t_n} g  ||_{i; k,\infty} \leq     K' n^{- \frac{k}{2}+1}.\] 
since in this case $\gamma (\frac{k}{2} - \beta ) = \frac \gamma 2 \geq \frac {M-2} 2 \geq \frac{k}{2}-1 $.}
\eproof

Before proving the main expansion result for this part i.e. Theorem \ref{Thm:ExpansionFwdCumulated}, let us make precise the shape of the leading coefficient $\Psi^{lin}$ in the expansion.
\begin{Definition}{\ }\label{de psi-lin}
\begin{enumerate}[i.]
\item Under Assumption \ref{H0},
\begin{equation} 
\Psi^{lin}_{T}(s,x) :=  T^{ \frac{m-1}{2}}\E\mbra{ \sum_{\beta\in \mc{A}_{m+1}\setminus \mc{A}_m} \int_{s}^{T} {C}_{\beta} V^\beta [P_{t,T}g(X_{t}^{0,x})] \ud t }.
\label{Eq:UniformCubExpansionUniform}
\end{equation}
\item Under Assumption \ref{H1}
\begin{align*} 
\Psi^{lin}_{T}(s,x) := (T^{\frac{1}{\gamma}}\gamma)^{\frac{m-1}{2}}\E\mbra{ \sum_{\beta\in \mc{A}_{m+1}\setminus \mc{A}_m} \int_{s}^{T} {C}_{\beta} V^\beta [P_{t,T}g(X_{t}^{0,x})] \mpar{T-t }^{(1-\frac{1}{\gamma}) \frac{m-1}{2}} \ud t }.
\label{Eq:UniformCubExpansionH1}
\end{align*}
\end{enumerate}
\end{Definition}

We are ready to prove the global error expansion result for the conditional expectation using the cubature method.\\

\noindent \textbf{Proof of Theorem \ref{Thm:ExpansionFwdCumulated}}

Let $u$ be given as the solution of the linear equation $\partial_t u(t,x) + \mc{L}u(t,x) =0 $ defined on $[0,T)$ with boundary condition $u(T,x)=g(x)$ and $\mc L$ the Dynkin operator of $X$. Clearly, for any $i=0, \ldots, n-1$, we have $  P_{t_i, t_{i+1}} u(t_i,x) = u(t_i,x).$ In particular, \[u(0,x) = P_{0,T}  u(0,x)  = \E[ u(T, X^{0,x}_{T}) ] = \E[ g(X^{0,x}_{T}) ]  = P_{0, T } g(x)\]

Note that given that $M\geq m+2$, Assumptions  \ref{H0} (resp.  \ref{H1}) and Proposition \ref{Prop:lin_u_reg} imply that $u(t_i,.)$ has bounded derivatives of all order up to $m+2$, for $i<n$. 

Using the definition of the function $u$ and the properties of the family of operators $Q$ and $P$, we can then reformulate the error term as a telescopic sum, 
\begin{align} 
\E^{\hQ}[ g(\hX^{0,x}_{T}) ] - & \E[ g(X^{0,x}_{T}) ]  =  Q_{0,T}u(0,x) - P_{0,T}u(0,x) \notag\\
= & \sum_{i=0}^{n-1} Q_{0,t_i} \{[Q_{t_i,t_{i+1}}-P_{t_i,t_{i+1}}] P_{t_{i+1},T} u\}(0,x) \notag\\
= & \sum_{i=0}^{n-2} Q_{0,t_i}\{[Q_{t_i,t_{i+1}}-P_{t_i,t_{i+1}}]  u\}(0,x)  + Q_{0,t_{n-1}}\{ [Q_{t_{n-1},t_n} -P_{t_{n-1},t_n}]g\}(x) .   \label{Eq:CubExp}
\end{align}

At this point, we divide proof of the theorem in two parts. \\

\textit{Proof of the bound \eqref{Eq:UniformCubBoundUniform} }\\

We consider a bound for each of the two terms in \ref{Eq:CubExp}.  To treat the first term,  we use \eqref{Eq:OneStepCubBound} and  Lemma \ref{Lem:ControlSumLinear} to deduce
\begin{align}
 \sum_{i=0}^{n-2} Q_{0,t_i}\{[Q_{t_i,t_{i+1}}-P_{t_i,t_{i+1}}]  u\}(0,x)     \leq & K_{m+1} \sum_{i=0}^{n-1}  h_i^{ \frac{m+1}{2}} ||u||_{i;m+1,\infty}  +  K_{m+2} \sum_{i=0}^{n-1}  h_i^{ \frac{m+2}{2}} ||u||_{i;m+2,\infty}  ]\label{Eq:CubBound0} \\ \leq &  K' n^{ - \frac{m-1}{2}} . \notag
\end{align}
Now, the last term in  \ref{Eq:CubExp} is simply an average of a one step cubature approximation. 
Under Assumption \eqref{H0}, $g$ is regular enough to use the one-step cubature approximation result in Proposition  \eqref{Prop:OneStepCub}, hence
\begin{equation}  
|[Q_{t_{n-1},t_n} -P_{t_{n-1},t_n}]g(x)|_{\infty} \leq K h_{n-1}^{\frac{m+1}2} ||g||_{m+2,\infty} = K n^{-\frac{m+1}2} ||g||_{m+2,\infty}  .
\label{Eq:LastTermFwdH0}
\end{equation}
The same result cannot be applied under Assumption \eqref{H1} though, as $g$ is not supposed to be regular. However using the Lipschitz regularity of $g$, the bounded variation of the cubature formula and its rescaling, we get
\begin{equation}
 \begin{split} |[Q_{t_{n-1},t_n} -P_{t_{n-1},t_n}]g(x)|_{\infty}   \leq & |Q_{t_{n-1},t_n}[g](x) -g(x)|_{\infty} + | P_{t_{n-1},t_n}[g](x) -g(x)|_{\infty}\\ \leq & K |g|_{Lip} h_{n-1}^{\frac 12} = K |g|_{Lip} n^{- \frac \gamma 2}     \end{split}
 \label{Eq:LastTermFwdH1}
\end{equation} 

The first claim is thus proven.\\

\textit{Explicit first expansion term \eqref{Eq:CubExpansionFwd}}\\

 { In view of \eqref{Eq:LastTermFwdH0}  (or \eqref{Eq:LastTermFwdH1} under Assumption \ref{H1}), we just need to obtain an explicit approximation of the first term in \eqref{Eq:CubExp}. We proceed in three steps: first, we identify it as the sum of the residuals of one step approximations in \eqref{Eq:CubBound1}.  This term includes the approximation operator $Q$, which would depend on the scheme. Then,  in a second step,  we show  that we can replace this operator by the operator $P$ to the cost of a higher order term in \eqref{Eq:ControlLinearA}. Finally, we show that we can replace the sum by an integral also to the cost of a higher order term in  \eqref{Eq:CubBound3}}.

\begin{enumerate}[i.]
\item We start with the identification of the first order in the expansion.   We use the extra regularity assumptions  to apply successively, Jensen's inequality, properties of expectation operators, the claim on symmetric cubatures of Proposition \ref{Prop:OneStepCub} and Lemma \ref{Lem:ControlSumLinear}, to  write
\begin{align}
\bigg|  \sum_{i=0}^{n-2} & Q_{0,t_i}\{[Q_{t_i,t_{i+1}}-P_{t_i,t_{i+1}}]  u\}(0,x)  -  \sum_{i=0}^{n-2}  \sum_{\beta \in \mc{A}_{m+1}\setminus \mc{A}_m}C_\beta  h_i^{ \frac{m+1}{2}}  Q_{0,t_i}  [V^\beta u ](0,x)  \bigg| \label{Eq:CubBound1}\\
& \leq  \sum_{i=0}^{n-2}   Q_{0,t_i}   \left|  \{ Q_{t_i,t_{i+1}}-P_{t_i,t_{i+1}} \}  u   -  \sum_{\beta \in \mc{A}_{m+1}\setminus \mc{A}_m}C_\beta  h_i^{ \frac{m+1}{2}}  [V^\beta u ]  \right| (0,x)  \notag  \\
& \leq  \sum_{i=0}^{n-2}  \left|    \{ Q_{t_i,t_{i+1}}-P_{t_i,t_{i+1}} \}  u   -  \sum_{\beta \in \mc{A}_{m+1}\setminus \mc{A}_m}C_\beta  h_i^{ \frac{m+1}{2}}  [V^\beta u ]   (0,x)\right|  \notag  \\
& \leq  K_{m+3} \sum_{i=0}^{n-2}  h_i^{ \frac{m+3}{2}}  || u||_{i;m+3,\infty}   + K_{m+4} \sum_{i=0}^{n-2}  h_i^{ \frac{m+4}{2}}  || u||_{i;m+4,\infty}  \notag\\
& \leq K n^{ - \frac{m+1}{2}}.\notag
\end{align}

\item Now, we analyse changing $Q$ by $P$ by showing the bound
\begin{equation}
\bigg|  \sum_{i=0}^{n-2}  \sum_{\beta \in \mc{A}_{m+1}\setminus \mc{A}_m}C_\beta  h_i^{\frac{m+1}{2}}  \left[ Q_{0,t_i}  [V^\beta u ](0,x) - P_{0,t_i}  [V^\beta u ](0,x)\right]  \bigg|   \leq K n^{-(m-1)}.
\label{Eq:ControlLinearA}
\end{equation}

To prove this, note  that from the regularity of $V^\beta u$, the definition of the seminorm and the bounds in Proposition \ref{Prop:lin_u_reg} we have
\[ || V^\beta u(t,.)||_{m+1,\infty} = || V^\beta P_{t,T} g(.)||_{m+1,\infty} \leq K || P_{t,T} g(.)||_{m+1+||\beta||,\infty} = K || u(t,.)||_{m+1+||\beta||,\infty}. \]
Hence, using equation \eqref{Eq:CubBound0} and reordering, we have 
\begin{align}
\bigg|  \sum_{i=0}^{n-2}  & \sum_{\beta \in \mc{A}_{m+1}\setminus \mc{A}_m}C_\beta  h_i^{\frac{m+1}{2}}  \left[ Q_{0,t_i}  [V^\beta u ](0,x) - P_{0,t_i}  [V^\beta u ](0,x)\right]  \bigg| \label{Eq:CubBound2}\\
& \leq  K  \sum_{i=0}^{n-2}     h_i^{\frac{m+1}{2}}  \sum_{\beta \in \mc{A}_{m+1}\setminus \mc{A}_m} \left[    \sum_{j=0}^{i-1}  h_j^{\frac{m+1}{2}} || V^\beta u||_{j;m+1,\infty}  +   \sum_{j=0}^{i-1} h_j^{\frac{m+2}{2}} ||V^{\beta} u||_{j;m+2,\infty}     \right]  \notag\\
& \leq  K \sum_{i=0}^{n-2}     h_i^{\frac{m+1}{2}}  \left[   \sum_{j=0}^{i-1}  h_j^{\frac{m+1}{2}} || u||_{j;2m+2,\infty}  +   \sum_{j=0}^{i-1} h_j^{\frac{m+2}{2}} ||u||_{j;2m+3,\infty}     \right]  \notag \\
& \leq  K    \sum_{j=0}^{n-2}      \left( \sum_{i=j+1}^{n-2}     h_i^{\frac{m+1}{2}}  \right) h_j^{\frac{m+1}{2}}  || u||_{j,2m+2,\infty}   +   \sum_{j=0}^{n-2}  \left( \sum_{i=j+1}^{n-2}      h_i^{\frac{m+1}{2}}  \right) h_j^{\frac{m+2}{2}} ||u||_{j,2m+3,\infty} .\notag 
\end{align}
If Assumption \ref{H0} holds, we deduce \eqref{Eq:ControlLinearA} directly using the boundedness in Proposition \ref{Prop:lin_u_reg}. 
Under Assumption  \ref{H1}, we deduce from \eqref{Eq:ControlSumPsiIntegral} in Lemma \ref{Lem:ApproxSum} 
\begin{align}
\sum_{j=0}^{n-2}     & \left( \sum_{i=j+1}^{n-1}     h_i^{\frac{m+1}{2}}  \right) h_j^{\frac{m+1}{2}}  || u(t_j,.)||_{2m+2,\infty}\\
& \leq  K \left(\frac{T\gamma}{n}  \right)^{\frac{m-1}{2}} \sum_{j=0}^{n-2}      \left(  \int_{t_{j+1}}^T \mpar{1-\frac{t}{T}}^{\frac{m-1}{2}(1-\frac{1}{\gamma})}\ud t  + R(j) \right) h_j^{\frac{m+1}{2}}  (T-t_{j})^{- \frac{2m+1}{2} }    \notag \\
& \leq  K \left(\frac{T^{1/\gamma}\gamma}{n}  \right)^{\frac{m-1}{2}} \sum_{j=0}^{n-2}      \left(  \int_{t_{j+1}}^T (T-t)^{\frac{m-1}{2}(1-\frac{1}{\gamma})}\ud t  \right) h_j^{\frac{m+1}{2}}  (T-t_{j})^{- \frac{2m+1}{2} }   \notag \\
& \qquad + \sum_{i=j+1}^{n-1} \frac{1}{n^{\frac{m-1}2}} \mpar{K' \frac1{n^{\gamma(\ell-1)+1}} + K'' \frac{1}{n^\ell} \mpar{T-t_{j+1}}^{\gamma(\ell-1)-\ell} }h_j^{\frac{m+1}{2}}  (T-t_{j})^{- \frac{2m+1}{2} }    \notag \\  
& \leq  K \left(\frac{T^{1/\gamma}\gamma}{n}  \right)^{\frac{m-1}{2}} \sum_{j=0}^{n-2}    (T-t_{j+1})^{ -\frac{m-1}{2}(1-\frac{1}{\gamma})+1 }    h_j^{\frac{m+1}{2}}  (T-t_{j})^{- \frac{2m+1}{2} } +O(n^{\frac{m(\ell+1)}{2}-1})  \notag\\
& \leq K n^{-(m-1)},\notag
\end{align}
where the last two inequalities follow from (the proof of) Corollary \ref{Cor:ControlSum2}.
A similar development can be applied to the second term in the last line of \eqref{Eq:CubBound2}, to deduce \eqref{Eq:ControlLinearA} in this case.

\item Finally, it remains to show that 
\begin{align}
\bigg| n^{ - \frac{m-1}{2}} \Psi^{lin}_{T}(0,x) -  \sum_{i=0}^{n-2}  \sum_{\beta \in \mc{A}_{m+1}\setminus \mc{A}_m}C_\beta  h_i^{\frac{m+1}{2}}   P_{0,t_i}  [V^\beta u ](0,x)   \bigg|
\leq
Kn^{-\frac{m+1}2}.
\label{Eq:CubBound3}
\end{align}
This last inequality is obtained from Lemma \ref{Lem:ApproxSum} with 
$ \psi(t)={C}_{\beta} V^\beta [P_{t,T}g(X_{t}^{0,x})] $: the control on $|\psi|$ readily follows from Proposition \ref{Prop:lin_u_reg}. Moreover, since for each $\beta \in \mc{A}_{M-2}$, we have 
\[ \partial_t E[V^\beta u(t,X_{t})] = \E[ L^0 V^\beta u(t,X_{t})] = \E\left[V^{0\star \beta} u(t,X_{t}) + \sum_{i=1}^d V^{(i,i)\star \beta} u(t,X_{t})  \right],\]
it follows that $\psi$ has well defined locally bounded first order derivatives in $[0,T)$, and hence it is of bounded variation in $[0,T-\epsilon]$ for all $\epsilon >0$ as required. 
\end{enumerate}
\eproof

\section{Study of the Backward Approximation}
\label{Sec:BwdAnalysis}

Our goal in this section is to study the error terms in the approximation of the backward function, given by 
\[   \Delta \hu_i(x) := \hu_i(x) -u(t_i, x).  \]

An inspection of the proof of Theorem \ref{Thm:ExpansionFwdCumulated} shows that we used the linearity properties of the operators $P$ and $Q$ to decompose the global error in a sum of one-step errors. 

We follow a similar idea to expand the error in the case of backward equations.
However, the non-linearity will change the type of decomposition in terms of one-step errors that can be achieved, and will require some additional approximations. 
For this purpose, let us define
\[   \tv_i(x) = \E^{\hQ}\mbra{u(t_{i+1}, \hX_{t_{i+1}}^{t_i,x})\frac{\Delta \homega_i}{h_i}} ,\]
and set
\[ \Delta \tv_i(x) := \hv_{i}(x) - \tv_i(x)  =  \frac{1}{h_i} \E^{\hQ} \mbra{  \Delta \hu_{i+1}(\hX_{t_{i+1}}^{t_i,x} )  \Delta \homega_i  }. \]

Then, we can re-write the decoupling function evaluated at the grid times as the solution of a perturbed scheme, namely

\[ u(t_i,x) = \E^{\hQ}\mbra{u(t_{i+1},\hX^{t_i,x}_{t_{i+1}}) + h_{i} f(x, u(t_i,x), \tv_i(x) )} + \zeta_i(x)  \]
with
 {
\begin{align*}
\zeta_i(x) &:=  \overbrace{\E\mbra{u(t_{i+1},X^{t_i,x}_{t_{i+1}})} -  \E^{\hQ}\mbra{u(t_{i+1},\hat{X}^{t_i,x}_{t_{i+1}})}}^{\zeta_i^e(x)}
  + h_{i} \overbrace{\left\{ f(x,u(t_i,x), \brv_i(x))  - f(x,u(t_i,x), \tv_i(x)) \right\}}^{\zeta_i^f(x) } \\
  & + \underbrace{\E\mbra{\int_{t_i}^{t_{i+1}} \set{f(X_s^{t_i,x}, u(s, X_s^{t_i,x}), v(s, X_s^{t_i,x})) -  f(x,u(t_i,x), \brv_i(x)) }\ud s} }_{\zeta_i^\tau(x)}
\end{align*}
and $\brv_{i}(x) = \E \mbra{\frac{\Delta W_i}{h_i}u(\ti{+1},X^{\ti{},x}_{\ti{+1}})}$. That is, the perturbation is explained by contributions due to the cubature approximation (term $\zeta_i^e$), due to the approximation of the $v$-term in $f$ (term $\zeta_i^f$), and due to discretisation (term $\zeta_i^\tau$).
}
\subsection{One step analysis}
 {
In this section, we use the above decomposition to show that 
\begin{equation}\zeta_i(x) = \varphi(t_i,x)h_i^2+\xi(t_i,x)h_i^3,
\label{Eq:zeta_decompose}
\end{equation}
where $\varphi$ is given explicitly and $\xi$ is a bounded measurable function. More precisely, we show that
\begin{align}
\varphi(t,x) &= \varphi^e(t,x) + \varphi^f(t,x)+\varphi^\tau(t,x) \,,
\label{Eq:varphi_def}
\\
\xi(t,x) &= \xi^e(t,x) + \xi^f(t,x)+\xi^\tau(t,x)\,,
\label{Eq:xi_def}
\end{align}
where each component plays a similar role for the corresponding perturbation term.
}
Let us recall the following regularity result valid under our set of assumptions.
\begin{Proposition}[Space regularity of $u$]  
Suppose either Assumption \ref{H0} or \ref{H1} hold. For all $0<k\leq M$, and $\beta \in \mc A_k$, the function $V^{\beta} u$ is well-defined. Moreover,  
\begin{itemize}
\item Under Assumption \ref{H0}, there exists a constant $C$ such that for all $t<T$  \[ || u(t,.) ||_{k,\infty} \leq C ||g ||_{k,\infty}.\] 
\item Under  Assumption \ref{H1}, there exists a constant C such that for all $t<T$ \[ || u(t,.) ||_{k,\infty} \leq C |g|_{\mathrm{Lip}}(T-t)^{ - \frac{k-1}{2}}.\]
\end{itemize}
\label{Prop:u_reg}
\end{Proposition}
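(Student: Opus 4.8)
The statement to prove is Proposition~\ref{Prop:u_reg} (Space regularity of $u$), which extends the linear regularity estimates of Proposition~\ref{Prop:lin_u_reg} to the nonlinear decoupling field $u$ solving the semilinear PDE \eqref{eq de pde}.

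My plan is to reduce the nonlinear problem to the linear one via a fixed-point/Picard argument on the Duhamel (Feynman--Kac) representation of $u$. Concretely, $u$ satisfies
\[
u(t,x) = P_{t,T}g(x) + \int_t^T P_{t,s}\big[f(\cdot,u(s,\cdot),v(s,\cdot))\big](x)\,\ud s,
\]
where $v(s,\cdot) = \nabla_x u(s,\cdot)\,\sigma(s,\cdot)$ (equivalently, $v = V^{(j)}u$ componentwise). First I would set up the Picard iteration $u^{(0)} := P_{\cdot,T}g$ and $u^{(\ell+1)}(t,x) := P_{t,T}g(x) + \int_t^T P_{t,s}[f(\cdot,u^{(\ell)}(s,\cdot),v^{(\ell)}(s,\cdot))](x)\,\ud s$, and prove by induction on $\ell$ a uniform (in $\ell$) bound of the claimed form on $\|u^{(\ell)}(t,\cdot)\|_{k,\infty}$. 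For the inductive step I would differentiate the Duhamel formula using the Stratonovich operators $V^\beta$, apply Proposition~\ref{Prop:lin_u_reg} to each term $P_{t,T}g$ and $P_{t,s}[\cdots]$, and use the chain rule to express $V^\beta\big(f(\cdot,u,v)\big)$ as a polynomial in the lower-or-equal-order derivatives $V^\alpha u$, $V^\alpha v = V^{\alpha\star(j)}u$ with coefficients built from the (bounded, under \ref{H0}; $C^M_b$ under \ref{H1}) derivatives of $f$. The key structural point is that $V^\beta$ applied to $v=V^{(j)}u$ raises the order by one, so controlling $\|u(t,\cdot)\|_{k,\infty}$ requires controlling $\|P_{t,s}[\cdot]\|_{k,\infty}$ where the integrand involves derivatives of $u$ up to order $k$ — but with the crucial gain that $f$ is being evaluated at an \emph{interior} time $s$, so under \ref{H1} the singularity $(T-s)^{-(k-1)/2}$ coming from Proposition~\ref{Prop:lin_u_reg} applied to $P_{t,s}$ is integrated against $\ud s$ near $s=T$ and the exponent is $>-1$, keeping the integral finite.

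The main obstacle is the self-referential nature of the estimate under Assumption~\ref{H1}: the bound one wants to prove on $\|u(t,\cdot)\|_{k,\infty}$ appears, via the chain rule for $V^\beta(f(\cdot,u,v))$, inside the integral on the right-hand side (with the $v$-dependence pushing to order $k$). I would close this by a Gronwall-type argument tailored to the weights $(T-t)^{-(k-1)/2}$: assuming inductively the bounds for orders $<k$, write the integral equation for $g_k(t):=\|u(t,\cdot)\|_{k,\infty}$, split $P_{t,s}$ into the portion hitting the highest-order term $V^{\cdots}u\cdot\sigma$ (which, using $P_{t,s}$ with $k$ derivatives falling on the smooth test-function part only when $s$ is bounded away from $T$, and falling on the already-controlled lower-order data otherwise) and the rest, and obtain $g_k(t)\le C(T-t)^{-(k-1)/2} + C\int_t^T (T-s)^{-1/2} g_k(s)\,\ud s$ — modulo care that the singular kernel is only $(T-s)^{-1/2}$, integrable, so a singular Gronwall lemma yields $g_k(t)\le C'(T-t)^{-(k-1)/2}$. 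The case with $g$ having bounded derivatives up to order $p$ is handled identically starting from the sharper bound \eqref{Eq:lin_u_UFG_SB}. Under Assumption~\ref{H0} everything is uniform in $t$ and the argument is the plain Picard iteration with classical Gronwall, no singular kernels.

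Finally I would note that the well-definedness of $V_\alpha u$ for $\alpha\in\mathcal A_k$, $k\le M$, is a byproduct: the Picard iterates are smooth (by Proposition~\ref{Prop:lin_u_reg} and the smoothness of $f$ in the relevant variables), converge in the appropriate weighted norm, and the limit inherits the regularity and the bound. Existence of the classical solution itself was already asserted after Assumption~\ref{H1} in the excerpt, so strictly speaking only the quantitative bounds need proof; I would reference \cite{crisan_cubature_2013} for the UFG gradient bounds on the linear semigroup that feed Proposition~\ref{Prop:lin_u_reg} and thus this argument, and keep the presentation as an induction on the derivative order $k$ combined with a Picard iteration in $\ell$.
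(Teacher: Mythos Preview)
The paper does not give a self-contained proof of this proposition: it simply cites \cite{pardoux_backward_1992-1} for the bound under Assumption~\ref{H0} and Theorem~1.4 of \cite{crisan_sharp_2012} for the bound under Assumption~\ref{H1}. Your proposal is therefore not a reproduction of the paper's argument but an attempt at a direct proof, and its overall architecture --- Duhamel representation, Picard iteration, chain rule on $V^\beta(f(\cdot,u,v))$, linear gradient bounds from Proposition~\ref{Prop:lin_u_reg}, and a Gronwall closure --- is indeed the skeleton of how the cited references proceed. Under Assumption~\ref{H0} your sketch is essentially complete: no singular weights appear and plain Gronwall suffices.

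Under Assumption~\ref{H1}, however, the inequality you write down does not close. You propose
\[
g_k(t)\le C(T-t)^{-(k-1)/2} + C\int_t^T (T-s)^{-1/2} g_k(s)\,\ud s,
\]
but if $g_k(s)\sim (T-s)^{-(k-1)/2}$ as you want to conclude, then for $k\ge 2$ the integrand behaves like $(T-s)^{-k/2}$ near $s=T$ and the integral diverges. The singular kernel that actually appears comes from the smoothing of $P_{t,s}$, i.e.\ it is $(s-t)^{-1/2}$ (via \eqref{Eq:lin_u_UFG_SB} with $p=k-1$), not $(T-s)^{-1/2}$. With the correct kernel Henry's singular Gronwall lemma does apply, but one must also track separately the contributions of the lower-order derivatives of $u$ inside $V^\beta(f(\cdot,u,v))$, which carry their own $(T-s)$-weights, and verify that these feed into the forcing term $a(t)$ with the right integrability. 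This bookkeeping is the genuinely delicate part of the UFG case and is precisely what \cite{crisan_sharp_2012} carries out; your sketch identifies the right mechanism but misplaces the singularity, and fixing that is not cosmetic.
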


The proposition under Assumption  \ref{H0} is proved by repeated application of the results in \cite{pardoux_backward_1992-1} (in particular Theorem 3.2. and the BSDE representation for the derivative of $u$). 
The proposition under Assumption \ref{H1} is proved in  Theorem 1.4  \cite{crisan_sharp_2012}.

We are now ready to show  \eqref{Eq:zeta_decompose}  and identify the terms in \eqref{Eq:varphi_def}, \eqref{Eq:varphi_def}.

\begin{Lemma} Let $\hQ$ be a cubature measure constructed from a symmetric cubature formula of order $m\geq 3$. Let $i\leq n-2$. 
Under Assumptions \ref{H0} or \ref{H1} with $M\geq 7$ we have that
\begin{enumerate}[i.]
\item $\E\mbra{u(t_{i+1},X^{t_i,x}_{t_{i+1}})} -  \E^{\hQ}\mbra{u(t_{i+1},\hat{X}^{t_i,x}_{t_{i+1}})} = 
  \varphi^e(\ti{},x){h_i^2} +  \xi^{e}(t_i,x)  h_i^{3}$,
  \item $f(t_i,x,u(t_i,x), \brv_i(x))  - f(t_i,x,u(t_i,x), \tv_i(x))  = 
  \varphi^f(\ti{},x)h_i + \xi^f(t_i,x)  h_i^{2}$,
  \item $\E\mbra{\int_{t_i}^{t_{i+1}} \set{f(s,X_s^{t_i,x}, u(s, X_s^{t_i,x}), v(s, X_s^{t_i,x})) -  f(t_i,x,u(t_i,x), \brv_i(x)) }\ud s} 
 = \varphi^\tau(\ti{},x)h_i^2 + \xi^\tau(t_i,x)   h_i^{3}$,
\end{enumerate}
 where $\xi^e,\xi^f,\xi^\tau$ are  bounded measurable functions,
 \begin{align*}
  \varphi^e(t_i,x)& = -\sum_{\beta \in \mathcal{A}_{4} \setminus \mathcal{A}_3} {C}_\beta V^{\beta} u(t_i,x)
  \\
  \varphi^f(t_i,x)& =  \sum_{j=1}^{d} \partial_{z_j} f\mpar{t_i,x,u(t_i,x),  \brv_i(x)} \mbrace{  \sum_{\beta \in \mathcal{A}_{3} \setminus \mc{A}_2 } \mpar{ C_{j\star\beta} + \sum_{k=1}^{|\beta|} C_{\beta_{1:k} \star j \star \beta_{k+1:|\beta|_0}}  }V^{\beta} u(t_i,x) }
  \\
  \varphi^{\tau}(t_i,x)& =    \sum_{\beta \in \mathcal{A}_{2} \setminus \mc{A}_1 } \left( V^{\beta\star(0)} + \frac 12 \sum_{j=1}^d V^{\beta\star(j,j)}  \right) u(t_i,x)  \\
  & \qquad + \sum_{j=1}^d  \sum_{\beta \in \mc{A}_3\setminus \mc{A}_2}  \partial_{z_j} f(t_i,x,u(t_i,x), v(t_i,x)) \mpar{ V^{j\star\beta} + \sum_{k=1}^{|\beta|} V^{\beta_{1:k} \star j \star \beta_{k+1:|\beta|_0}}  } u(t,x)\\ 
  v(t,x) & :=\sum_{k=1}^{d} V^{(k)}u(t,x);
 \end{align*}
the constants $\{{C}_\beta\}_{\beta \in \mc{A}_m}$ are defined as in Proposition \ref{Prop:OneStepCub}. 
Moreover, there exist constants $K,K',K_6, K_7$ such that
  \begin{align*}
  |\xi^e|_{\infty}& \leq K_6 ||u||_{i;6,\infty} +K_7h_i^{1/2} ||u||_{i;7,\infty}\,,
  \\
  |\xi^f|_{\infty} & \leq  K ||u||_{i;5,\infty}\,,
  \\
  |\xi^{\tau}|_{\infty}& \leq  K' ( ||u||_{i;5,\infty} +||u||_{i;6,\infty}).
 \end{align*}
 \label{Lem:OneStepControlBwd}
\end{Lemma}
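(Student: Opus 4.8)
The plan is to prove each of the three identities by a Taylor--Stratonovich expansion to the appropriate order, isolating the leading term and absorbing the remainder into the $\xi$-functions, and then verifying the stated bounds on $\xi^e,\xi^f,\xi^\tau$ via the gradient estimates of Proposition~\ref{Prop:u_reg}. Throughout, the hypothesis $M\ge 7$ is what guarantees that $u(t_i,\cdot)$ has enough bounded derivatives for the expansions to make sense, and the fact that $\hQ$ comes from a \emph{symmetric} cubature formula of order $m\ge 3$ is crucial: it means $C_\beta=0$ for all $\|\beta\|\le m$ \emph{and} for all $\beta$ with $\|\beta\|$ odd, so the first surviving cubature-error coefficients sit in $\mc A_4\setminus\mc A_3$ (giving an $h_i^{2}$, not $h_i^{3/2}$, leading term in (i)), and the first $h_i$-order term in (ii) picks up only multi-indices in $\mc A_3\setminus \mc A_2$.

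For part (i), I would invoke Proposition~\ref{Prop:OneStepCub}: applied with $\psi=u(t_{i+1},\cdot)$ (legitimate since $M\ge 7 \ge m+4$ when $m=3$, and for general $m$ one uses $m\ge 3$ together with $M\ge 7$ controlling the relevant seminorms), equation~\eqref{Eq:OneStepCubExact} for symmetric $m$ gives
\[
Q_{t_i,t_{i+1}}u - P_{t_i,t_{i+1}}u = h_i^{\frac{m+1}2}\sum_{\beta\in\mc A_{m+1}\setminus\mc A_m} C_\beta V^\beta u(t_i,x) + O\big(h_i^{\frac{m+3}2}(\|u\|_{i;m+3,\infty}+\|u\|_{i;m+4,\infty})\big).
\]
When $m=3$ this is exactly $\varphi^e(t_i,x)h_i^2 + \xi^e(t_i,x)h_i^3$ with $\varphi^e = -\sum_{\beta\in\mc A_4\setminus\mc A_3}C_\beta V^\beta u$ (the sign matching because the statement writes $\E[\cdots] - \E^{\hQ}[\cdots] = (P-Q)u$), and $|\xi^e|_\infty\le K_6\|u\|_{i;6,\infty}+K_7 h_i^{1/2}\|u\|_{i;7,\infty}$ is read off directly. (For $m>3$ the leading term is of even higher order in $h_i$, so it can simply be folded into $\xi^e h_i^3$; the stated $\varphi^e$ is the $m=3$ representative, and one notes $C_\beta=0$ on $\mc A_4\setminus\mc A_3$ when $m\ge 5$ so there is no inconsistency.)

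For part (ii), I would Taylor-expand $f$ in its $z$-argument around $\tv_i(x)$: since $f\in C^M_b$ (or $C^{M+1}_b$) with $M\ge 7$,
\[
f(t_i,x,u,\brv_i) - f(t_i,x,u,\tv_i) = \sum_{j=1}^d \partial_{z_j}f(t_i,x,u,\tv_i)\,(\brv_i^j - \tv_i^j) + O\big(|\brv_i-\tv_i|^2\big),
\]
so everything reduces to expanding $\brv_i^j-\tv_i^j$, i.e. the difference between $\E[\tfrac{\Delta W_i}{h_i}u(t_{i+1},X^{t_i,x}_{t_{i+1}})]$ and $\E^{\hQ}[\tfrac{\Delta\homega_i}{h_i}u(t_{i+1},\hX^{t_i,x}_{t_{i+1}})]$. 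Here I would apply a Taylor--Stratonovich expansion to $u(t_{i+1},X^{t_i,x}_{t_{i+1}})$ and to $u(t_{i+1},\hX^{t_i,x}_{t_{i+1}})$, multiply by the increment $W^j$ (resp.\ $\homega^j$), and use the defining matching property of the cubature formula together with its scaling to see that the coefficients of matching iterated integrals cancel; the surviving terms are iterated integrals indexed by $j\star\beta$ and by the insertions $\beta_{1:k}\star j\star\beta_{k+1:|\beta|_0}$ (these insertions arise precisely because multiplying an iterated integral $I^\beta$ by a one-step increment $W^j$ produces, via the Stratonovich product/shuffle formula, a sum of iterated integrals of length $|\beta|+1$ with a $j$ inserted in every slot). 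The leading $h_i^1$ contribution comes from $\|\beta\|=3$, i.e. $\beta\in\mc A_3\setminus\mc A_2$ — consistent with the division by $h_i$ and the scaling $I^{j\star\beta}_{t_i,t_{i+1}}\sim h_i^{(\|\beta\|+1)/2}$ — giving exactly $\varphi^f(t_i,x)h_i$, with the shift from $\tv_i$ to $\brv_i$ inside $\partial_{z_j}f$ absorbed into the $h_i^2$ remainder (it costs one power of $\|v-\tv\|\sim h_i^{1/2}$... in fact a full power, since $\brv_i-\tv_i=O(h_i)$ by the above). Thus $\xi^f$ collects a second-derivative-of-$f$ term times $(\brv_i-\tv_i)^2/h_i^2 = O(1)$ and the higher iterated-integral remainders, all controlled by $\|u\|_{i;5,\infty}$, giving $|\xi^f|_\infty\le K\|u\|_{i;5,\infty}$.

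For part (iii), I would split the integrand as a telescoping Taylor expansion: write $f(s,X_s,u(s,X_s),v(s,X_s)) = F(s,X^{t_i,x}_s)$ where $F(t,y):=f(t,y,u(t,y),v(t,y))$, and note that by the chain/product rule $F$ inherits $C^{M-2}_b$-type regularity in space (losing two derivatives because $v=\nabla u\,\sigma$ involves one derivative of $u$, and $f$ is only $C^M_b$ under \ref{H1}); with $M\ge 7$ this leaves $F$ with five bounded $V$-derivatives. Then $\E[\int_{t_i}^{t_{i+1}}F(s,X_s)\,ds] - h_i F(t_i,x)$ is handled by first replacing $F(s,X_s)$ by its Taylor--Stratonovich expansion $\sum_{\beta\in\mc A_2}V^\beta F(t_i,x)\E J^\beta_{t_i,s}+\text{rem}$, integrating in $s$, and observing $\E J^{(0)}_{t_i,s}=s-t_i$ while odd-length $J$'s vanish in expectation; this yields a leading term of order $h_i^2$ equal to $\tfrac12 V^{(0)}F(t_i,x)h_i^2 + \tfrac12\sum_j V^{(j,j)}F(t_i,x)h_i^2$, which after expanding $V^{(0)}F$ and $V^{(j,j)}F$ in terms of $V^\beta u$ and $\partial_{z_j}f$ (again using $v=\sum_k V^{(k)}u$) produces exactly $\varphi^\tau(t_i,x)h_i^2$ as written — the first line coming from the "$u$-part" of $F$ and the second from the "$v$-part". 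Simultaneously, the replacement of $F(t_i,x)$'s $z$-argument by $\brv_i(x)$ (instead of $v(t_i,x)$) contributes a further $O(h_i)\cdot h_i = O(h_i^2)$-after-integration correction via $\partial_z f$, which I would also fold into $\varphi^\tau$ or $\xi^\tau$ as appropriate; and the remainder is $O(h_i^3)$ with $|\xi^\tau|_\infty\le K'(\|u\|_{i;5,\infty}+\|u\|_{i;6,\infty})$, the two seminorm orders reflecting the worst of the $u$-part (needing up to $V^{\beta\star(j,j)}$ with $\|\beta\|\le 2$, i.e.\ order $6$, plus remainder) and the $v$-part (needing $V^{j\star\beta}$ with $\|\beta\|\le 3$, i.e.\ order $5$ on $v$, hence order $6$ on $u$... more carefully one tracks that the remainder only needs order $5$ on $u$ through $F$'s third derivatives while the leading coefficient needs order $6$, hence the sum).

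The bookkeeping in parts (ii) and (iii) — in particular getting the combinatorial insertion terms $\sum_{k}C_{\beta_{1:k}\star j\star\beta_{k+1:|\beta|_0}}$ exactly right from the Stratonovich product rule, and correctly tracking which corrections land at order $h_i^2$ (so they must appear in $\varphi^\tau$) versus order $h_i^{5/2}$ or $h_i^3$ (so they go into $\xi^\tau$) — is where the main effort lies; the expansion principle itself (Taylor--Stratonovich, Theorem~5.6.1 in \cite{kloeden_numerical_1992}, plus cancellation of matching iterated-integral moments as in Proposition~\ref{Prop:OneStepCub}) is routine. I expect the hardest single point to be justifying the reduced regularity requirement for part (iii) — i.e.\ that $F(t,\cdot)=f(t,\cdot,u(t,\cdot),v(t,\cdot))$ really has the needed bounded $V$-derivatives with the quantitative seminorm bounds under the singular estimate of Proposition~\ref{Prop:u_reg} (the $(T-t)^{-(k-1)/2}$ blow-up does not enter here because $i\le n-2$ keeps $t_{i+1}<T$, but one must check uniform-in-$i$ constants are not needed at this lemma's level).
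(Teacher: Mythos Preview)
Your treatment of (i) and (ii) matches the paper's closely. One small note on (ii): the paper expands $f$ in $z$ around $\brv_i$ (so that $\partial_{z_j}f(\ldots,\brv_i)$ appears directly, matching $\varphi^f$), whereas you expand around $\tv_i$ and then shift; both are fine. The shuffle/insertion identity you invoke is exactly what the paper uses, stated there as the Stratonovich integration-by-parts formula $\Delta W_i^j\, J^\beta_{t_i,t_{i+1}} = J^{j\star\beta}_{t_i,t_{i+1}} + \sum_{k} J^{\beta_{1:k}\star j\star\beta_{k+1:|\beta|_0}}_{t_i,t_{i+1}}$.

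For (iii) your route diverges from the paper's in a substantive way. You propose to set $F(t,y):=f(t,y,u(t,y),v(t,y))$, Taylor--Stratonovich expand $F(s,X_s)$, and then unpack $V^{(0)}F$, $V^{(j,j)}F$ by the chain rule. The paper instead invokes the PDE satisfied by $u$: since $u$ is a classical solution one has $f(t,x,u(t,x),v(t,x)) = L^{(0)}u(t,x) = (V^{(0)}+\sum_j V^{(j,j)})u(t,x)$, so the integrand $F(s,X_s)-F(t_i,x)$ is literally $L^{(0)}u(s,X_s)-L^{(0)}u(t_i,x)$, and a single Taylor--Stratonovich step on $L^{(0)}u$ yields the first line of $\varphi^\tau$ with no chain rule at all --- $V^\beta F = V^\beta L^{(0)}u = V^{\beta\star(0)}u+\sum_j V^{\beta\star(j,j)}u$ by definition of the iterated operators. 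Your chain-rule expansion of $V^{(0)}F$, $V^{(j,j)}F$ would instead produce terms containing $\partial_t f$, $\partial_{x_i}f$, $\partial_y f\cdot V^\alpha u$, $\partial_{z_j}f\cdot V^\alpha v$, which do \emph{not} visibly collapse to the stated $\varphi^\tau$ unless you bring in the PDE identity after the fact; so your claim ``produces exactly $\varphi^\tau$'' is unjustified as written. The second line of $\varphi^\tau$ is obtained in the paper (and in your sketch) the same way: Taylor-expand $f(t_i,x,u,\brv_i)-f(t_i,x,u,v(t_i,x))$ in $z$ and expand $v(t_i,x)-\brv_i(x)$ via Stratonovich plus the insertion identity.

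In short: add the PDE identity $f(\cdot,u,v)=L^{(0)}u$ at the start of (iii) and the argument becomes the paper's; without it your chain-rule route is correct in principle but does not deliver the stated form of $\varphi^\tau$.
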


\begin{Remark}
Under Assumption \ref{H0}, the claim also holds for $i=n-1$. Under Assumption \ref{H1}, ii.\ and iii.\ still hold, while using Lipschitz continuity of $g$, we can replace i.\ by
\begin{itemize}
\item [i'.] $\left | \E\mbra{u(t_{i+1},X^{t_i,x}_{t_{i+1}})} -  \E^{\hQ}\mbra{u(t_{i+1},\hat{X}^{t_i,x}_{t_{i+1}})} \right| \leq  
K {n^{-\frac{\gamma}{2}}} |g|_{Lip} $.
\end{itemize}
\label{Rmrk:LastStepBwd}
\end{Remark}

\begin{Remark}
Note that, if $M$ is large enough 
\begin{equation}
||\varphi(t_i,\cdot)||_{l,\infty} \leq K || u ||_{i;l+4,\infty},
\label{Ineq:ControlVarphi}
\end{equation}
and
\begin{equation}
||\xi(t_i,\cdot)||_{l,\infty} \leq K || u ||_{i;l+6,\infty}.
\label{Ineq:ControlXi}
\end{equation}
\end{Remark}

\noindent\textbf{Proof of Lemma \ref{Lem:OneStepControlBwd}.} 
\begin{enumerate}[i.]
\item  The claim is a consequence of the cubature control in Proposition \ref{Prop:OneStepCub}, and the regularity of $u$ stated in Proposition \ref{Prop:u_reg}.

\item The integration by parts properties of the Stratonovich integral imply that 
\[   \Delta W_i^j J^\beta_{t_i,t_{i+1}} = J^{j}_{t_i,t_{i+1}} J^\beta_{t_i,t_{i+1}} = 
J^{j\star\beta}_{t_i,t_{i+1}} + \sum_{k=1}^{|\beta|} J_{t_i,t_{i+1}}^{\beta_{1:k} \star j \star \beta_{k+1:|\beta|_0}}.\]

Hence, by definition of the symmetric cubature measure and using a Stratonovich Taylor expansion as in the proof of Proposition \ref{Prop:OneStepCub}, we have that
\begin{align}
 \breve{v}^j_{i} (x)  -   \tv_i^j (x)   = &\E \mbra{\frac{\Delta W^j_i}{h_i}u(\ti{+1},X^{t_i,x}_{\ti{+1}})}  -  \E^{\hQ}\mbra{\frac{\Delta \homega^j_i}{h_i}u(\ti{+1},\hX_{t_{i+1}}^{t_i,x})}  \label{Eq:brv-tv}\\ 
  = & h_i \sum_{\beta \in \mathcal{A}_{3}\setminus \mc{A}_2 } V^{\beta} u(\ti{},x) \mpar{ C_{j\star\beta} + \sum_{k=1}^{|\beta|} C_{\beta_{1:k} \star j \star \beta_{k+1:|\beta|_0}}  } + R \notag
\end{align}
 {
where 
\[ R = \sum_{\beta \in \partial \mathcal{A}_m} \E [ \Delta W_i^j J^\beta_{t_i,t_{i+1}}(   V^{\beta} u(.,X^{t_i,x}_.))] -  \E^\QQ [ \Delta \hat w_i^j I^\beta_{t_i,t_{i+1}}(   V^{\beta} u(.,\hat X^{t_i,x}_.))] \]
and we can easily verify that $|R| \leq K_{6} h^{2}_i ||u||_{i;5,\infty}$.} Denoting
\begin{align*}
 \breve{\alpha}_i^{j,k} (x) = \int_0^1 (1-\lambda)\partial_{z_j,z_k}f(t_i,x,u(t_i,x),\brv_i(x)+\lambda(\tv_i(x)-\brv_i(x)))\ud \lambda\,,
\end{align*}
we have
\begin{align*}
 f(t_i,x,  & u(t_i,x), \tv_i(x))  -  f(t_i,x, u(t_i,x),\brv_i(x))  \\
 &=  \sum_{j=1}^d \partial_{z_j}f(t_i,x, u(t_i,x),\brv_i(x)) (\tv_i^j(x)-\brv_i^j(x)) + \sum_{j=1}^d\sum_{k=1}^d \breve{\alpha}^{j,k}_i(x) (\tv_i^k(x)-\brv_i^k(x))(\tv_i^j(x)-\brv_i^j(x))
\end{align*}
and replacing \eqref{Eq:brv-tv} this leads to
\begin{align*}
 f( & x,  u(t_i,x), \tv_i(x))  -  f(x, u(t_i,x),\brv_i(x))  \\
 &=  h_{i}\sum_{j=1}^d \partial_{z_j}f(x, u(t_i,x),\brv_i(x)) \mbrace{ \sum_{\beta \in \mathcal{A}_{3}\setminus \mc{A}_2 } V^{\beta} u(\ti{},x) \mpar{ C_{j\star\beta} + \sum_{k=1}^{|\beta|} C_{\beta_{1:k} \star j \star \beta_{k+1:|\beta|_0}}  } } + \xi^f(t_i,x) h_i^2 \\
\end{align*}

where
\[ | \xi^f |_\infty \leq  K_6  ||u||_{i;5,\infty} .\]

\item  {To prove this claim, we rewrite  $f(t, x, u(t,x), v(t,x)) - f(t, x, u(t,x), v_i(x) \equiv v(t_i,x) ) $,  and  $f(t_i,x, u(t_i,x), \brv_i(x) )- f(t, x, u(t,x), v_i(x)) $ in terms of differential operators over $u$ and then effect the difference. }

To do this, note that the regularity of $u$,  implies that it solves classically the equation
\[ f(t, x, u(t,x), v(t,x)) = L^{(0)} u(t,x)  = \left( V^{(0)} + \frac{1}{2} \sum_{j=1}^d V^{(j,j)}\right) u(t,x).\] 
Hence, Fubini's theorem implies
\begin{align*}
 \E & \mbra {\int_{\ti{}}^{\ti{+1}}  [f\mpar{s,X_s^{t_i,x}, u(s,X_s^{t_i,x}), v(s,X_s^{t_i,x}} - f\mpar{ t_i, x, u(t_i,x),v_i(x)}] \ud s } \\
 & \qquad =    {\int_{\ti{}}^{\ti{+1}}}   \E \mbra { \left( V^{(0)} + \frac 1 2 \sum_{j=1}^d V^{(j,j)}\right) u(s,X_s^{t_i,x}) - \left( V^{(0)} + \frac 12 \sum_{j=1}^d V^{(j,j)}\right) u(t_i,x)}  \ud s\\
 & \qquad =    h_i^2 \sum_{\beta\in \mc{A}_2 \setminus \mc{A}_1} V^{\beta}\left( V^{(0)} + \frac 12\sum_{j=1}^d V^{(j,j)}\right) u(t_i,x) +  R_i(x)h_i^3.
\end{align*}
where
$|R_i|_\infty < K||u||_{6,\infty}$. Moreover,
\begin{align} 
v^j(t_i,x) -\brv_i^j(x) = & \mbrace{ V^{j}u(t_i,x) - \E\mbra{\frac{\Delta W^j_i}{h_i} u(t_i, X^{t_i,x}_{t_i+1}) } } \notag \\
= & - h_i\sum_{\beta \in \mc{A}_3\setminus \mc{A}_2} \mpar{ V^{j\star\beta} + \sum_{k=1}^{|\beta|} V^{\beta_{1:k} \star j \star \beta_{k+1:|\beta|_0}}  } u(t,x) + R^v_i(x) h_i^{2}  
\label{Eq:v-vbreve}
\end{align}
with \[|R^v_i|_\infty \leq \# \left(  \partial \mc{A}_{3} \cap \mc{A}_5    \right)  \cdot    ||u||_{i;5,\infty} . \]
 {with $\partial A_3$ defined as in  section \ref{subsec:multiindex}}. 

Turning now to $ f(t_i,x, u(t_i,x), \brv_i(x) )$, we conclude by a similar argument as before that
\begin{align*}
 f(t_i,x, u(t_i,x), \brv_i(x) )  = &  f(t_i,x,u(t_i,x),v(t_i,x)) \\
 & +  h_{i} \sum_{j=1}^d  \sum_{\beta \in \mc{A}_3\setminus \mc{A}_2}  \partial_{z_j} f (t_i,x) \mpar{ V^{j\star\beta} + \sum_{k=1}^{|\beta|} V^{\beta_{1:k} \star j \star \beta_{k+1:|\beta|_0}}  } u(t,x) \\
  & +  R'_i(x) h_i^2.
\end{align*}
where we wrote $\partial_{z_j} f(t_i,x)$ to mean the quantity $\partial_{z_j} f(t_i,x,u(t_i,x), v(t_i,x))$, and with $|R_i'|_\infty < ||\partial_{z_j} f ||_\infty |R^v_i|_\infty || u ||_{i;5,\infty}$. The claim then follows. \eproof
\end{enumerate}

\vspace{5pt}

\subsection{Error expansion}

We now proceed to show the full error expansion under our set of assumptions.
Let us recall the following result on the rate of convergence of the scheme (see  \cite{crisan_solving_2012}  or \cite{chaudru_de_raynal_cubature_2015}).

\begin{Lemma}
Let $\hQ$ be a cubature measure constructed from a symmetric cubature formula of order $m\geq 3$. 
Under Assumptions \ref{H0} or \ref{H1} with $M\geq 6$, (and $\gamma\geq 2$ in the case of Assumption $\ref{H1}$), we have
\[|\Delta \hu_i|_{\infty}   \leq K \left( | \Delta \hat{u}_{n-1}|_{\infty}+  \sum_{j=i}^{n-2} ||u(t_j,.)||_{4,\infty} h_j^2 + ||u(t_j,.)||_{6,\infty} h_j^3 \right) \leq K' n^{-1}.\]
Moreover,
\[|\Delta \hu_i|_{\infty}^2 +  \sum_{j=i}^{n-1}h_j |\Delta \tv_j|_{\infty}^2  \leq K \left(|\Delta \hat{u}_{n-1}|_{\infty}^2 + \sum_{j=i}^{n-1} ||u(t_j,.)||_{4,\infty} h_j^2 + ||u(t_j,.)||_{6,\infty} h_j^3 \right)^2 \leq K' n^{-2}.\]
\label{Lem:GlobalControlBackward}
\end{Lemma}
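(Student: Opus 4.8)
The plan is to derive both bounds from a one-step stability estimate for the perturbed backward scheme, propagated by a discrete Gronwall argument. Recall that $u(t_i,\cdot)$ solves the scheme
\[
u(t_i,x) = \E^{\hQ}\mbra{u(t_{i+1},\hX^{t_i,x}_{t_{i+1}}) + h_i f(x,u(t_i,x),\tv_i(x))} + \zeta_i(x),
\]
with $\zeta_i(x)=\varphi(t_i,x)h_i^2+\xi(t_i,x)h_i^3$, while $\hu_i$ solves the same recursion with $\zeta_i\equiv 0$ and $\tv_i$ replaced by $\hv_i$. Subtracting, and writing $\Delta\hu_i=\hu_i-u(t_i,\cdot)$, $\Delta\tv_i=\hv_i-\tv_i$, one gets a recursion of the form
\[
\Delta\hu_i(x) = \E^{\hQ}\mbra{\Delta\hu_{i+1}(\hX^{t_i,x}_{t_{i+1}})} + h_i\,\delta f_i(x) - \zeta_i(x),
\]
where $\delta f_i$ is the difference of the generator terms. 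Using the Lipschitz property of $f$ in $(y,z)$, $|\delta f_i|\le L(|\Delta\hu_i|+|\Delta\tv_i|)$, and the key observation (already used in the excerpt to define $\Delta\tv_i$) that $\Delta\tv_i(x)=\frac1{h_i}\E^{\hQ}[\Delta\hu_{i+1}(\hX^{t_i,x}_{t_{i+1}})\Delta\homega_i]$, so that by Cauchy--Schwarz and the bounded-variation/scaling properties of the cubature measure, $h_i|\Delta\tv_i|_\infty^2\le K\,\E^{\hQ}[|\Delta\hu_{i+1}|^2]\le K|\Delta\hu_{i+1}|_\infty^2$. This is the standard mechanism by which the $Z$-error is controlled by the $Y$-error in BTZ-type schemes.

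For the first (pointwise, $\sup$-norm) bound: absorb the $h_i f$ term using Lipschitz continuity and the fact that for $h_i$ small the implicit equation for $\hu_i$ is a contraction, giving $|\Delta\hu_i|_\infty \le (1+Kh_i)|\Delta\hu_{i+1}|_\infty + |\zeta_i|_\infty + Kh_i|\Delta\tv_i|_\infty$. One then needs to handle the $h_i|\Delta\tv_i|_\infty$ term; the cleanest route is to keep it and iterate, or to bound $|\Delta\tv_i|_\infty$ crudely — but the sharp way is the $L^2$-type estimate. Iterating the $\sup$-norm recursion from $n-1$ down to $i$, with $\prod(1+Kh_j)\le e^{KT}$ bounded, yields $|\Delta\hu_i|_\infty \le K\big(|\Delta\hu_{n-1}|_\infty + \sum_{j=i}^{n-2}|\zeta_j|_\infty\big)$, and since $|\zeta_j|_\infty \le |\varphi(t_j,\cdot)|_\infty h_j^2 + |\xi(t_j,\cdot)|_\infty h_j^3 \le K(\|u(t_j,\cdot)\|_{4,\infty}h_j^2 + \|u(t_j,\cdot)\|_{6,\infty}h_j^3)$ by \eqref{Ineq:ControlVarphi}--\eqref{Ineq:ControlXi}, the first displayed inequality follows. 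The final bound $\le K'n^{-1}$ comes from the regularity Proposition \ref{Prop:u_reg}: under Assumption \ref{H0} one has $\|u(t_j,\cdot)\|_{k,\infty}\le K$ and $\sum h_j^2 \le Tn^{-1}$; under Assumption \ref{H1} one uses the blow-up bound $(T-t_j)^{-(k-1)/2}$ together with $\gamma\ge 2$ and the Riemann-sum estimates (Corollary \ref{Cor:ControlSum2}) to check $\sum_j \|u(t_j,\cdot)\|_{4,\infty}h_j^2 \le Kn^{-1}$, and similarly $|\Delta\hu_{n-1}|_\infty$ is controlled by Remark \ref{Rmrk:LastStepBwd} (the term $n^{-\gamma/2}|g|_{\mathrm{Lip}}$ is $O(n^{-1})$ when $\gamma\ge 2$).

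For the second bound, the plan is to square the recursion and use a Young-type inequality $(a+b)^2\le (1+Kh_i)a^2 + (1+\frac1{Kh_i})b^2$ to get $|\Delta\hu_i|_\infty^2 \le (1+Kh_i)\E^{\hQ}[|\Delta\hu_{i+1}|^2]_\infty + \tfrac{K}{h_i}(h_i^2|\Delta\tv_i|_\infty^2 + |\zeta_i|_\infty^2 + \dots)$; crucially the $h_i|\Delta\tv_i|_\infty^2$ piece, after using $h_i|\Delta\tv_i|_\infty^2\le K\E^{\hQ}[|\Delta\hu_{i+1}|^2]$, can be reabsorbed into the leading $\E^{\hQ}[|\Delta\hu_{i+1}|^2]$ term — but then one loses the "$+\sum h_j|\Delta\tv_j|^2$" on the left, so instead one should not fully absorb it: carry both $|\Delta\hu_i|_\infty^2$ and the running sum $\sum_{j\ge i}h_j|\Delta\tv_j|_\infty^2$ and close a discrete Gronwall on the pair. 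This gives $|\Delta\hu_i|_\infty^2 + \sum_{j=i}^{n-1}h_j|\Delta\tv_j|_\infty^2 \le K\big(|\Delta\hu_{n-1}|_\infty^2 + (\sum_{j=i}^{n-1}|\zeta_j|_\infty)^2\big)$, and inserting \eqref{Ineq:ControlVarphi}--\eqref{Ineq:ControlXi} and Proposition \ref{Prop:u_reg} as before gives the stated bound and the $O(n^{-2})$ rate. The main obstacle I expect is the bookkeeping under Assumption \ref{H1}: the gradient bounds blow up near $T$, so the sums $\sum_j \|u(t_j,\cdot)\|_{k,\infty}h_j^{k/2}$ must be estimated carefully via the decreasing-step Riemann-sum lemmas (Corollary \ref{Cor:ControlSum2}, Lemma \ref{Lem:ApproxSum}), and one must verify the condition $\gamma\ge 2$ is exactly what makes the worst term ($k=6$, i.e. $h_j^3$ weighted by $(T-t_j)^{-5/2}$) summable to $O(n^{-1})$; this is the only delicate point, the contraction/Gronwall machinery being otherwise routine.
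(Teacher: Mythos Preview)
The paper does not actually prove this lemma: it is stated as a known result with a citation (``see \cite{crisan_solving_2012} or \cite{chaudru_de_raynal_cubature_2015}''), so there is no ``paper's own proof'' to compare against. Your sketch is essentially the standard stability-plus-Gronwall argument from those references, and the overall strategy is correct.

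One point deserves tightening. In your treatment of the first (linear, $\sup$-norm) bound you obtain the recursion
\[
|\Delta\hu_i|_\infty \le (1+Kh_i)|\Delta\hu_{i+1}|_\infty + Kh_i|\Delta\tv_i|_\infty + |\zeta_i|_\infty,
\]
and then hesitate on how to handle $h_i|\Delta\tv_i|_\infty$. The crude Cauchy--Schwarz bound $|\Delta\tv_i|_\infty\le Kh_i^{-1/2}|\Delta\hu_{i+1}|_\infty$ only gives a factor $(1+Kh_i^{1/2})$, and $\prod_j(1+Kh_j^{1/2})$ is \emph{not} bounded (for a uniform grid $\sum_j h_j^{1/2}\sim n^{1/2}$), so this route fails as written. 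The clean fix---which you allude to---is to prove the second (squared) estimate first: there the Cauchy--Schwarz bound $h_i|\Delta\tv_i|_\infty^2\le K\,\E^{\hQ}[|\Delta\hu_{i+1}|^2]$ contributes only an $O(h_i)$ perturbation to the leading squared term, the Gronwall closes, and the first bound then follows immediately by taking square roots. I would reorganise the argument accordingly rather than leave the first bound with an unresolved step.

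Your comments on the $\gamma\ge 2$ bookkeeping under Assumption~\ref{H1} (using Corollary~\ref{Cor:ControlSum2} to sum $\|u(t_j,\cdot)\|_{4,\infty}h_j^2$ and $\|u(t_j,\cdot)\|_{6,\infty}h_j^3$, and Remark~\ref{Rmrk:LastStepBwd} for the terminal step) are on target.
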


\subsubsection{One-step expansion}

We first prove the following key one-step expansion.

\begin{Lemma}
Under the assumptions of Lemma \ref{Lem:OneStepControlBwd}, we have that

\begin{align}
  \Delta \hu_i(x)  = &  \E^{\hQ} \mbra{   \hat{\Lambda}_{i,i+1}^x \Delta \hu_{i+1}(\hX^{t_i,x}_{t_{i+1}})} 
  - e^{h_i \eta^y(t_i,x) } {\varphi} (t_i,x) h_i^2  -e^{h_i \eta^y(t_i,x) } \xi(t_i,x) h_i^3           \label{Eq:DeltaUHat}\\
  & \quad   -  e^{h_i \eta^y(t_i,x) }  R^{\exp}_i(x) \Delta \hu_i(x)  +  h_i  e^{h_i \eta^y(t_i,x) } R_i^{u,\tv}(x) \,, \notag 
\end{align}
where
\begin{align}
 \eta^y(t_i,x) &:= \partial_y f(t_i,x,u(t_i,x),\tv_i(x))\,, \qquad \eta^z(t_i,x) := \partial_zf(t_i,x,u(t_i,x),\tv_i(x))\,, \nonumber
\\
\hat{\Lambda}_{i,j}^x &:=  \prod_{k=i}^{j-1} \left[  e^{h_k \eta^y (t_k,\hX_{t_{k}}^{t_i,x}) } (1+ \eta^z(t_k,\hX_{t_{k}}^{t_i,x}) \Delta \homega_k) \right] \,, \text{ for }j> i\,, \label{eq de hatLambda}
\end{align}
 and  $ \hat{\Lambda}_{i,i}^x:= 1$. 
 Moreover, for some constants $C^z,C^{\exp},C^{u}$, $C^{\tv}$
\begin{align*}
&|R_i^z|_{\infty} \leq  C^{z} |\Delta \hu_{i+1}|_\infty h_i^2\,, \qquad    |R_i^{\exp}|_{\infty} \leq  C^{\exp} h_i^2\,, \\
& \text{ and }\;|R_i^{u,\tv}|_\infty \leq C^{u} |\Delta \hu_i|^2_\infty + C^{\tv} |\Delta \tv_{i}|^2_\infty  . 
\end{align*}
\label{Lem:ErrorExpansionOneStep}
\end{Lemma}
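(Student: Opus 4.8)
The plan is to start from the perturbed scheme satisfied by $u(t_i,\cdot)$ and subtract the definition of $\hu_i$, so that the one-step error $\Delta\hu_i$ is expressed as a conditional expectation of $\Delta\hu_{i+1}$ plus a linearisation of the generator plus the one-step remainder $\zeta_i$. Concretely, writing $u(t_i,x) = \E^{\hQ}[u(t_{i+1},\hX^{t_i,x}_{t_{i+1}}) + h_i f(x,u(t_i,x),\tv_i(x))] + \zeta_i(x)$ and subtracting the scheme $\hu_i(x) = \E^{\hQ}[\hu_{i+1}(\hX^{t_i,x}_{t_{i+1}}) + h_i f(x,\hu_i(x),\hv_i(x))]$, I get
\begin{align*}
\Delta\hu_i(x) = \E^{\hQ}[\Delta\hu_{i+1}(\hX^{t_i,x}_{t_{i+1}})] + h_i\bigl(f(x,\hu_i(x),\hv_i(x)) - f(x,u(t_i,x),\tv_i(x))\bigr) - \zeta_i(x).
\end{align*}
The next step is to Taylor-expand the generator difference to first order in its $y$ and $z$ arguments: $f(x,\hu_i,\hv_i) - f(x,u,\tv_i) = \eta^y(t_i,x)\Delta\hu_i(x) + \eta^z(t_i,x)\cdot\Delta\tv_i(x) + (\text{quadratic remainder})$, where the quadratic remainder is controlled by $|\Delta\hu_i|^2 + |\Delta\tv_i|^2$ and will feed into $R_i^{u,\tv}$. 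Then I substitute $\Delta\tv_i(x) = \frac{1}{h_i}\E^{\hQ}[\Delta\hu_{i+1}(\hX^{t_i,x}_{t_{i+1}})\Delta\homega_i]$ from its definition, which turns the $\eta^z$ term into $\eta^z(t_i,x)\cdot\E^{\hQ}[\Delta\hu_{i+1}(\hX^{t_i,x}_{t_{i+1}})\Delta\homega_i]$, i.e. a conditional expectation against the weight $1 + \eta^z(t_i,x)\cdot\Delta\homega_i$.

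At this point I have $\Delta\hu_i(x)(1 - h_i\eta^y(t_i,x)) = \E^{\hQ}[(1 + \eta^z(t_i,x)\cdot\Delta\homega_i)\Delta\hu_{i+1}(\hX^{t_i,x}_{t_{i+1}})] - \zeta_i(x) + h_i(\text{quadratic in }\Delta\hu_i,\Delta\tv_i) + h_i\eta^z\cdot(\text{something }O(h_i^2))$. The key algebraic move is to replace the factor $(1 - h_i\eta^y)^{-1}$ by $e^{h_i\eta^y}$: since $(1-h_i\eta^y)^{-1} = e^{h_i\eta^y}(1 + R_i^{\exp})$ with $|R_i^{\exp}| \le C^{\exp}h_i^2$ (standard expansion of $e^x - (1-x)^{-1} = O(x^2)$ uniformly on bounded sets, using boundedness of $\partial_y f$), multiplying through produces exactly the stated $-e^{h_i\eta^y}R_i^{\exp}\Delta\hu_i$ correction term. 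Collecting the $e^{h_i\eta^y}\zeta_i = e^{h_i\eta^y}(\varphi h_i^2 + \xi h_i^3)$ gives the $-e^{h_i\eta^y}\varphi h_i^2 - e^{h_i\eta^y}\xi h_i^3$ terms, and the remaining quadratic pieces and the $\eta^z$-against-$O(h_i^2)$ piece get absorbed into $h_i e^{h_i\eta^y}R_i^{u,\tv}$, whose bound $|R_i^{u,\tv}|_\infty \le C^u|\Delta\hu_i|^2_\infty + C^{\tv}|\Delta\tv_i|^2_\infty$ follows from the quadratic Taylor remainder of $f$ in $(y,z)$ together with the bound on $\Delta\tv_i$ and Lemma \ref{Lem:OneStepControlBwd}. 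Finally, the product structure of $\hat{\Lambda}^x_{i,i+1} = e^{h_i\eta^y(t_i,x)}(1+\eta^z(t_i,x)\cdot\Delta\homega_i)$ matches the definition \eqref{eq de hatLambda} at $j=i+1$, so the conditional-expectation term is exactly $\E^{\hQ}[\hat{\Lambda}^x_{i,i+1}\Delta\hu_{i+1}(\hX^{t_i,x}_{t_{i+1}})]$.

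The main obstacle I anticipate is bookkeeping the remainder terms cleanly — in particular making sure the $\eta^z$-term generated by substituting $\Delta\tv_i$ really does split into the clean $(1+\eta^z\Delta\homega_i)$ weight plus a genuinely $O(h_i^3)$-in-sum contribution. Here one must be careful that $\Delta\tv_i$ appears both inside the linear term (fine, it becomes the weight) and inside the quadratic remainder of $f$ (where it contributes $|\Delta\tv_i|^2$ to $R^{u,\tv}_i$); there is no leftover $R_i^z$ term in the final statement's display even though $R_i^z$ is bounded in the lemma, so I should double check whether $R_i^z$ enters through an intermediate expansion (e.g. replacing $\tv_i$ by $\brv_i$ in the $\eta^y,\eta^z$ evaluation points) and is then re-absorbed. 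The cleanest route is: (i) fix the evaluation points of $\partial_y f,\partial_z f$ at $(t_i,x,u(t_i,x),\tv_i(x))$ from the outset (as in the statement), (ii) Taylor-expand once in $(y,z)$ around that point with exact integral remainder, (iii) substitute the identity for $\Delta\tv_i$, (iv) isolate $(1-h_i\eta^y)$, invert, and swap for $e^{h_i\eta^y}$. All the constants $C^z,C^{\exp},C^u,C^{\tv}$ then come from uniform bounds on the second derivatives of $f$ (Assumption \ref{H0} or \ref{H1}) and from Lemma \ref{Lem:OneStepControlBwd}; no new analytic input is needed beyond those.
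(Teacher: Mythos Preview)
Your proposal is correct and follows essentially the same route as the paper: subtract the perturbed scheme from the scheme definition, Taylor-expand $f$ in $(y,z)$ around $(u(t_i,x),\tv_i(x))$ to isolate $\eta^y\Delta\hu_i + \eta^z\Delta\tv_i$ plus a quadratic remainder $R_i^{u,\tv}$, substitute the definition of $\Delta\tv_i$ to obtain the weight $(1+\eta^z\Delta\homega_i)$, move the $h_i\eta^y\Delta\hu_i$ term to the left, and then replace $(1-h_i\eta^y)^{-1}$ by $e^{h_i\eta^y}$ at the cost of the $O(h_i^2)$ term $R_i^{\exp}$. Your observation about $R_i^z$ is well taken: the bound on $R_i^z$ is stated but the term does not actually appear in the displayed identity \eqref{Eq:DeltaUHat} nor in the paper's own proof, so it is a vestigial artifact and you are right not to try to manufacture it.
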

\proof
Since the exact solution satisfies a perturbed version of the scheme, we have
\begin{align*}
  \Delta \hu_i(x) = & \E^{\hQ} \mbra{ \hu_{i}(\hX^{t_i,x}_{t_{i+1}}) -  u(t_{i+1},\hX^{t_i,x}_{t_{i+1}})}  -  \varphi(t_i,x) h_i^2 - \xi(t_i,x) h_i^3 \\
 & + h_i \left[ f(t_i,x,\hu_i(x), \hv_i(x)) - f(t_i,x,u(t_i,x),\tv_i(x)) \right]  . 
\end{align*}

We compute, using the mean value theorem, 
\begin{align*}
 f(t_i,x,\hu_i(x), & \hv_i(x)) - f(t_i,x,u(t_i,x),\tv_i(x))  = \eta^y(t_i,x)  \Delta \hu(t_i,x) 
    + \eta^z(t_i,x)   \Delta \tv_i(x) + R_i^{u,\tv}(x)
 \end{align*}
 where
 \begin{align*}
  R_i^{u,\tv}(x) = &    \alpha^{yz}_i  \Delta u_i(x)  \Delta \tv_i(x) 
    +  \frac12 \alpha^{yy}_i  | \Delta \hu_i(x)|^2   +  \frac12 \alpha^{zz}_i | \Delta \tv_i(x)|^2   \\
\leq & C^u  | \Delta \hu_i(x)|^2 + C^{\tv}  | \Delta \tv_i(x)|^2 \\
\end{align*}
for some $\alpha_i^{yz}, \alpha_i^{xy}, \alpha_i^{zz}$ with norm bounded by $\sup\limits_{|\beta|_1 \leq 2} |D^\beta f| $. 

This leads to
\begin{align}
  \Delta \hu_i(x)  \left(1-h_i \eta^y(t_i,x) \right)  = \E^{\hQ} & \mbra{  \Delta \hu_i (\hX^{t_i,x}_{t_{i+1}}) \left(1+ \Delta \homega_i  \eta^z(t_i,x)  \right) }  - \varphi(t_i,x)h_i^2 \label{Eq:Deltahu1} \\ 
  &  - \xi(t_i,x) h_i^3  + h_i R_i^{u,\tv}(x) \,. \notag 
\end{align}

On the other hand, notice that for $h_i$ small enough,
\begin{align}
\frac{1}{1-h_i \eta^y(t_i,x) }  = e^{h_i  \eta^y(t_i,x) } + R_i^{\exp}(x) 
\label{Eq:ExpEtaY}
\end{align}
where
\[ |R_i^{\exp}|_\infty \leq K h_i^2|\partial_y f|_{\infty} \leq C^{\exp} h_i^2 .\]

The claim follows by inserting \eqref{Eq:ExpEtaY} and   the definition of $\hat{\Lambda}_{i,i+1}^x$  in \eqref{Eq:Deltahu1}.
\eproof

\subsubsection{Global Expansion}

\begin{Lemma}
\[ \left| \Delta \hat u_i (x) - \Delta \hat u_{n-1} (x) + \sum_{j=i}^{n-2} h_j^2 Q_{t_i,t_j} \left[ \hat{\Lambda}_{i,j}^{x} e^{h_j\eta^y(t_j,.)}    \varphi(t_j, .) \right](x) \right| \leq K n^{-2}\,,     \]
where $\hat{\Lambda}^x$ is defined in \eqref{eq de hatLambda}.

\label{Lem:PreGlobal}
\end{Lemma}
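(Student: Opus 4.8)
The plan is to turn the one-step identity \eqref{Eq:DeltaUHat} of Lemma \ref{Lem:ErrorExpansionOneStep} into a closed expansion by iterating it from level $i$ up to level $n-1$. Two structural facts make the iteration collapse cleanly. First, the weights satisfy the cocycle identity $\hat{\Lambda}^x_{i,k}=\hat{\Lambda}^x_{i,j}\,\hat{\Lambda}^{\hX^{t_i,x}_{t_j}}_{j,k}$ for $i\le j\le k$, which is immediate from the product form \eqref{eq de hatLambda} and the flow property $\hX^{t_j,\hX^{t_i,x}_{t_j}}_{t_k}=\hX^{t_i,x}_{t_k}$. Second, the cubature operators obey $Q_{s,t}Q_{t,u}=Q_{s,u}$ (recorded just after Definition \ref{Def:Operators}). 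Substituting \eqref{Eq:DeltaUHat} for $\Delta\hu_{i+1}$ inside $\E^{\hQ}[\hat{\Lambda}^x_{i,i+1}\Delta\hu_{i+1}(\hX^{t_i,x}_{t_{i+1}})]$, then for $\Delta\hu_{i+2}$, and so on down to $n-1$, and using these two facts at each step to merge the $\hat{\Lambda}$'s and the nested cubature expectations, one obtains
\[
\Delta\hu_i(x) = \E^{\hQ}\big[\hat{\Lambda}^x_{i,n-1}\Delta\hu_{n-1}(\hX^{t_i,x}_{t_{n-1}})\big] - \sum_{j=i}^{n-2} h_j^2\, Q_{t_i,t_j}\big[\hat{\Lambda}^x_{i,j}\,e^{h_j\eta^y(t_j,\cdot)}\varphi(t_j,\cdot)\big](x) + \mc{E}_i(x),
\]
where $Q_{t_i,t_j}[\hat{\Lambda}^x_{i,j}\Phi(\cdot)](x):=\E^{\hQ}[\hat{\Lambda}^x_{i,j}\Phi(\hX^{t_i,x}_{t_j})]$ extends the notation of Definition \ref{Def:Operators} to path functionals, and $\mc{E}_i(x):=\sum_{j=i}^{n-2}\E^{\hQ}\big[\hat{\Lambda}^x_{i,j}\,e^{h_j\eta^y(t_j,\hX^{t_i,x}_{t_j})}\big(-h_j^3\xi(t_j,\cdot)-R^{\exp}_j(\cdot)\Delta\hu_j(\cdot)+h_jR^{u,\tv}_j(\cdot)\big)(\hX^{t_i,x}_{t_j})\big]$ gathers the three remainder contributions of \eqref{Eq:DeltaUHat}. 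Comparing with the statement, it then suffices to show $|\mc{E}_i(x)|\le Kn^{-2}$ and $\big|\E^{\hQ}[\hat{\Lambda}^x_{i,n-1}\Delta\hu_{n-1}(\hX^{t_i,x}_{t_{n-1}})]-\Delta\hu_{n-1}(x)\big|\le Kn^{-2}$.

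The first ingredient is a uniform second moment bound $\sup_{i\le j\le n}\E^{\hQ}[(\hat{\Lambda}^x_{i,j})^2]\le K$, uniform in $x$ and $n$. This follows from a discrete Gronwall argument: conditioning on the first $k$ cubature increments and using that $\eta^y,\eta^z$ are bounded together with the cubature moments $\E^{\hQ}[\Delta\homega^p_k]=0$ and $\E^{\hQ}[\Delta\homega^p_k\Delta\homega^q_k]=\delta_{pq}h_k$ (which follow from Definition \ref{Def:CubForm} applied to the multi-indices $(p)$, $(p,q)$, $(q,p)$, via Riemann--Stieltjes / Stratonovich integration by parts), one gets $\E^{\hQ}[(\hat{\Lambda}^x_{i,j})^2]\le\prod_{k=i}^{j-1}e^{Kh_k}(1+Kh_k)\le e^{KT}$. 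Hence, by Cauchy--Schwarz, $|Q_{t_i,t_j}[\hat{\Lambda}^x_{i,j}\Phi(\cdot)](x)|\le K|\Phi|_\infty$ for any bounded measurable $\Phi$; in particular the weighted cubature operators do not amplify $L^\infty$ norms.

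Applying this termwise to $\mc{E}_i$ and invoking Lemma \ref{Lem:ErrorExpansionOneStep} ($|R^{\exp}_j|_\infty\le C^{\exp}h_j^2$, $|R^{u,\tv}_j|_\infty\le C^u|\Delta\hu_j|_\infty^2+C^{\tv}|\Delta\tv_j|_\infty^2$), the control $|\xi(t_j,\cdot)|_\infty\le K\|u\|_{j;6,\infty}$ from \eqref{Ineq:ControlXi} with Proposition \ref{Prop:u_reg}, and $|\Delta\hu_j|_\infty\le Kn^{-1}$ from Lemma \ref{Lem:GlobalControlBackward}, one gets
\[
|\mc{E}_i(x)|\le K\sum_{j=i}^{n-2}\Big(h_j^3\|u\|_{j;6,\infty}+n^{-1}h_j^2+h_j\big(|\Delta\hu_j|_\infty^2+|\Delta\tv_j|_\infty^2\big)\Big).
\]
The quadratic group is $\le K\big(T\sup_j|\Delta\hu_j|_\infty^2+\sum_j h_j|\Delta\tv_j|_\infty^2\big)\le Kn^{-2}$ by Lemma \ref{Lem:GlobalControlBackward}; the term $n^{-1}h_j^2$ sums to $Kn^{-1}\sum_j h_j^2\le Kn^{-2}$ for either grid; and $\sum_j h_j^3\|u\|_{j;6,\infty}\le Kn^{-2}$, which under Assumption \ref{H0} uses $\|u\|_{j;6,\infty}\le K\|g\|_{6,\infty}$ and $\sum h_j^3=KTn^{-2}$, and under Assumption \ref{H1} uses $\|u(t_j,\cdot)\|_{6,\infty}\le K|g|_{\mathrm{Lip}}(T-t_j)^{-5/2}$ from Proposition \ref{Prop:u_reg} fed into Lemma \ref{Lem:ApproxSum} / Corollary \ref{Cor:ControlSum2}. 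For the boundary term: under Assumption \ref{H0} the terminal step has $h_{n-1}=T/n$ with $g\in C^{m+1}_b$, so Remark \ref{Rmrk:LastStepBwd} / Lemma \ref{Lem:OneStepControlBwd} give $|\Delta\hu_{n-1}|_\infty\le Kn^{-2}$, whence both $\E^{\hQ}[\hat{\Lambda}^x_{i,n-1}\Delta\hu_{n-1}(\hX^{t_i,x}_{t_{n-1}})]$ (by the moment bound) and $\Delta\hu_{n-1}(x)$ are $O(n^{-2})$; under Assumption \ref{H1} one uses that $h_{n-1}=Tn^{-\gamma}$ is small, combining the Lipschitz one-step estimate of Remark \ref{Rmrk:LastStepBwd} for the linear part with the trivial $O(h_{n-1})$ bound on the generator part, and again the moment bound on $\hat{\Lambda}$.

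The main technical obstacle is the bookkeeping of the remainder sum under Assumption \ref{H1}: since the bounds on $\|u(t_j,\cdot)\|_{k,\infty}$ degenerate like $(T-t_j)^{-(k-1)/2}$ near maturity, one must check that this singularity is integrable against the weights $h_j^3$ (and against the extra $h_j^2$ produced by $R^{\exp}$ and $R^{u,\tv}$), which is exactly what the decreasing-step grid with $\gamma>m-1$ together with the summation estimates of the Appendix are built to handle. Coupled with the fact that $Q_{t_i,t_j}[\hat{\Lambda}^x_{i,j}\,\cdot\,]$ is $L^\infty$-stable (second moment bound), this closes the two estimates and yields the claimed $Kn^{-2}$ bound.
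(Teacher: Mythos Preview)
Your proof is correct and follows essentially the same route as the paper: iterate the one-step identity of Lemma \ref{Lem:ErrorExpansionOneStep} using the cocycle property of $\hat{\Lambda}$ and the flow property of the cubature scheme, establish an $L^2$ bound on $\hat{\Lambda}^x_{i,j}$ via conditioning on successive increments, and then control the three remainder sums with Lemma \ref{Lem:ErrorExpansionOneStep}, Lemma \ref{Lem:GlobalControlBackward}, and Corollary \ref{Cor:ControlSum2}. You are in fact slightly more careful than the paper in one place: the iteration produces $\E^{\hQ}[\hat{\Lambda}^x_{i,n-1}\Delta\hu_{n-1}(\hX^{t_i,x}_{t_{n-1}})]$ rather than $\Delta\hu_{n-1}(x)$, and you explicitly bound the discrepancy, whereas the paper silently identifies the two and absorbs this into the subsequent use of $|\Delta\hu_{n-1}|_\infty$ in the proof of Theorem \ref{Thm:MainExpansionBwd}.
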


\proof
Let us start by finding some controls on the elements of the sum in the statement. 

By independence of  increments, assuming $ i+2\leq j \leq n-2$ we compute
\begin{align*} 
\E \left[  \prod_{k=i}^{j-1} \left ( 1+ \eta^z(t_k,\hX_{t_{k}}^{t_i,x}) \Delta \homega_k \right)^2 \right]  &\leq 
\E \left[  \prod_{k=i}^{j-2} \left(1+ \eta^z(t_k,\hX_{t_{k}}^{t_i,x}) \Delta \homega_k \right)^2 \right]
(1+ |\eta^z(t_{j-1},\cdot)|^2_\infty h_{j-1}),
\end{align*}
so that a straightforward induction implies, for $ i\leq j \leq n-2$
\begin{align*}
\E \left[  \prod_{k=i}^{j-1} \left ( 1+ \eta^z(t_k,\hX_{t_{k}}^{t_i,x}) \Delta \homega_k \right)^2 \right]
& \leq \exp( \sum_{k=i}^{j-1} |\eta^z(t_k, \cdot)|_\infty^2 h_k ) \;.
\end{align*}
Hence, by the Cauchy-Schwarz theorem
\[ \E^{\hQ} \left[|\hat{\Lambda}_{i,j}^x|\right] \leq \exp\left( | \eta^y   |_\infty  T + \frac{1}{2} |\eta^z|_\infty^2 T  \right)  < \infty .\]

Now,  the flow property of the cubature approximation, i.e.
\[ \E^{\Q}\left[\psi\left(\hX_{t_k}^{t_j,\hX_{t_j}^{t_i,x}}\right)\right] =  \E^{\Q}\left[\psi(\hX_{t_k}^{t_i,x})\right]\]
for any measurable function $\psi$, implies that  we get, by iterating over $i$ the result in Lemma \ref{Lem:ErrorExpansionOneStep}, that
\begin{align*}
  \Delta \hat u_i(x)  - \Delta \hat u_{n-1} (x) & = -\E^{\Q} \left[  \sum_{j=i}^{n-2} \hat{\Lambda}_{i,j}^x \left(  e^{h_j \eta^y(t_j,\hX_{t_j}^{t_i,x} ) } {\varphi} (t_j, \hX_{t_j}^{t_i,x}) h_j^2   \right) \right] \notag \\
  & \quad+  \E^{\Q} \left[  \sum_{j=i}^{n-2} \hat{\Lambda}_{i,j}^xe^{h_j \eta^y(t_j, \hX_{t_j}^{t_i,x}) } \left(  R^z_j( \hX_{t_j}^{t_i,x})  -  \xi(t_j,\hX_{t_j}^{t_i,x}) h_j^3   \right) \right] \\
 & \quad +  \E^{\Q} \left[  \sum_{j=i}^{n-2} \hat{\Lambda}_{i,j}^x  e^{h_j \eta^y(t_j, \hX_{t_j}^{t_i,x}) } \left(  h_j  R_j^{u,\tv}(\hX_{t_j}^{t_i,x})  - R^{\exp}_j(\hX_{t_j}^{t_i,x}) \Delta \hu_i   \right) \right].
\end{align*}

Hence,  
\begin{align*}
& \left| \Delta \hat u_i (x)-  \Delta \hat u_{n-1} (x) + \sum_{j=i}^{n-2} h_j^2 Q_{t_i,t_j} \left[ \hat{\Lambda}_{i,j}^x e^{h_j\eta^y(t_j, .)}    \varphi(t_j, .) \right](x) \right| \\
&  \qquad \leq   \E^{\Q} \left[  \sum_{j=i}^{n-2} \hat{\Lambda}_{i,j}^x e^{h_j \eta^y(t_j, \hX_{t_j}^{t_i,x}) }  \left(   ||u ||_{j;6,\infty} h_j^3  + C^{\exp} h_j^2 | \Delta \hu_j|      \right) \right]\\
 & \qquad \quad +  \E^{\Q} \left[  \sum_{j=i}^{n-2} \hat{\Lambda}_{i,j}^x  e^{h_j \eta^y(t_j, \hX_{t_j}^{t_i,x}) } \left(  h_j (C^u|\Delta \hu_j|_\infty^2+C^{\tv}|\Delta \tv_j|_\infty^2 )     \right) \right]\\
 &  \qquad \leq    \sum_{j=i}^{n-2}    C' ||u ||_{j;6,\infty} h_j^3     +  \sum_{j=i}^{n-2} C_2^{\exp} h_j^2 n^{-1} \\
 & \qquad \quad +  \sum_{j=i}^{n-2}  h_j^2 C^u_2 n^{-2}+  \sum_{j=i}^{n-2}  h_jC_2^{\tv}|\Delta \tv_j|_\infty^2 )           \\
 & \qquad \leq K n^{-2}
\end{align*}
where we have used, the controls in Lemma \ref{Lem:ErrorExpansionOneStep} and the rate of convergence results in Lemma \ref{Lem:GlobalControlBackward} for the first inequality,  controls on the expectation of $\hat{\Lambda}_{i,j}^xe^{h_j \eta^y(t_j, \hX_{t_j}^{t_i,x})} $ on the second inequality and Lemma \ref{Lem:GlobalControlBackward}  and Corollary \ref{Cor:ControlSum2} for the last inequality. \\
\eproof\\

Although in Lemma \ref{Lem:PreGlobal} we have identified (up to the last step) an explicit coefficient for the main error term,  it still depends on the actual approximation algorithm. 
In order to have a more tractable expression, let us re-express the result of Lemma \ref{Lem:PreGlobal}  in terms of a family of linear operators forming a semigroup. 
Let us introduce
\begin{equation}
 \hat{\Theta}_{t_i,t_j} \psi(x) :=  Q_{t_i,t_j} \left[\hat \Lambda^x_{i,j} \psi(.) \right] (x) ; \quad t_i\leq t_j.
 \label{Eq:ThetaHatDef}
\end{equation} 
where $ \hat\Lambda^x_{i,j} $ is defined in \eqref{eq de hatLambda}.

Indeed, for any $i,j,k \in \{0, \ldots, n\}$ with $i\leq j\leq k$ we can verify the associativity property 
\[  \hat{\Theta}_{t_i,t_j}  \left[   \hat{\Theta}_{t_j,t_k} \psi(.)  \right] (x)  = Q_{t_i,t_j} \left[\hat \Lambda^x_{i,j} Q_{t_j,t_k} \left[\hat \Lambda^x_{j,k} \psi(.) \right] \right] (x) = Q_{t_i,t_k} \left[\hat \Lambda^x_{i,k} \psi(.) \right] (x) = \hat{\Theta}_{t_i,t_k} \psi(x)   \]
and its linearity.

We show in the following that this operator can be seen as an approximation of 
\begin{equation}
\Theta_{t_i,t_j} \psi(x) :=  P_{t_i,t_j} \left[ \Lambda_{t_i,t_j}^{x} \psi(.) \right] (x) ,
\label{Eq:ThetaDefined}
\end{equation} 
where, for $r \le t$,
\begin{align}
 \Lambda_{r,t}^{x} = & \exp  \left[ \int_{r}^t \eta^z(s,X_s^{r,x})   \ud W_s  + \int_{r}^t  \eta^y(s,X_s^{r,x})- \frac{1}{2} |\eta^z(s,X_s^{r,x})|^2  \ud s    \right], \label{eq de Lambda} \\
 = & \exp  \left[ \int_{r}^t \eta^z(s,X_s^{r,x})  \circ \ud W_s  - \frac{1}{2} \int_{r}^t \left( 2 \eta^y(s,X_s^{r,x}) + |\eta^z(s,X_s^{r,x})|^2 +  \sum_{a=1}^n V^{(a)}\eta^z_a(s,X_{s}^{r,x}) \right)  \ud s   \right], \nonumber
\end{align}
where $\eta^y$ and $\eta^z$ are defined in \eqref{eq de hatLambda}.
Clearly, the family  $(\psi \mapsto \Lambda_{r,t}^{x}\psi)_{r,t}$ also forms a linear semigroup of operators.

\begin{Lemma}
Assume that $m\geq 3$, that the cubature measure is symmetric and that  $\psi \in \bar{C}_b^5$. Then,
\[ |\Theta_{t_k,t_{k+1}} \psi(x) - \hat{\Theta}_{t_k,t_{k+1}} \psi(x) | \leq h_k^2 ||\psi||_{4,\infty} + h_k^{5/2} ||\psi||_{5,\infty} .  \]
\label{Lem:OneStepTheta}
\end{Lemma}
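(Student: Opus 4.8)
The plan is to compare the two one-step operators $\Theta_{t_k,t_{k+1}}\psi(x) = P_{t_k,t_{k+1}}[\Lambda^x_{t_k,t_{k+1}}\psi(\cdot)](x)$ and $\hat\Theta_{t_k,t_{k+1}}\psi(x) = Q_{t_k,t_{k+1}}[\hat\Lambda^x_{k,k+1}\psi(\cdot)](x)$ by expanding both as functionals of the driving path and invoking the cubature matching property on $\mathcal{A}_m$, exactly in the spirit of the proof of Proposition \ref{Prop:OneStepCub}. The key observation is that $\hat\Lambda^x_{k,k+1} = e^{h_k\eta^y(t_k,x)}(1+\eta^z(t_k,x)\Delta\hat\omega_k)$ is the exact value, for a path of bounded variation, of a \emph{truncation} of the Stratonovich exponential $\Lambda^x_{t_k,t_{k+1}}$: writing $a:=\eta^z(t_k,x)$, $b:=\eta^y(t_k,x)$ (frozen coefficients), the true $\Lambda^x$ over $[t_k,t_{k+1}]$ satisfies the linear SDE $\mathrm d\Lambda = \Lambda(b\,\mathrm ds + a\circ\mathrm dW_s)$ with $\Lambda_{t_k}=1$, whose Stratonovich–Taylor expansion begins $1 + a\,\Delta W_k + b\,h_k + \tfrac12 a^2 (\Delta W_k)^2 + \cdots$, while the cubature term $\hat\Lambda$ is precisely $1 + a\,\Delta\hat\omega_k + b\,h_k + a b\,h_k\Delta\hat\omega_k$. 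So both $\Theta$ and $\hat\Theta$ applied to $\psi$ admit Stratonovich–Taylor / Taylor expansions along the multi-index hierarchy, and the coefficients of all iterated integrals $J^\beta$ (resp. $I^\beta$) with $\|\beta\|\le m$ coincide between the two because (i) the cubature formula matches $\mathbb E J^\beta = \mathbb E^{\hat{\mathbb Q}} I^\beta$ for $\beta\in\mathcal A_m$, and (ii) the frozen-coefficient truncation of $\hat\Lambda$ agrees with the leading terms of $\Lambda$ up to an order that will turn out to be high enough for $m\ge 3$.

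Concretely, I would proceed as follows. First, freeze the coefficients $\eta^y,\eta^z$ at $(t_k,x)$; since $\eta^y,\eta^z$ inherit boundedness and smoothness from $f$ and $u$ (Proposition \ref{Prop:u_reg} plus smoothness of $f$ under \ref{H0}/\ref{H1}), replacing $\eta^z(s,X^{t_k,x}_s)$ by $\eta^z(t_k,x)$ inside the exponential in \eqref{eq de Lambda} costs $O(h_k)$ multiplicatively, hence $O(h_k^2)$ once paired with the $O(h_k)$-size of the nontrivial part of $\Lambda-1$; this is absorbed into the $h_k^2\|\psi\|_{4,\infty}$ term. Second, with frozen coefficients both $\Lambda^x_{t_k,t_{k+1}}\psi(X^{t_k,x}_{t_{k+1}})$ and $\hat\Lambda^x_{k,k+1}\psi(\hat X^{t_k,x}_{t_{k+1}})$ are genuine smooth functionals of the path increment over a single step, so one applies the Stratonovich–Taylor expansion (Theorem 5.6.1 in \cite{kloeden_numerical_1992}) to $\psi$ and the ODE expansion to the linear multiplicative factor, collecting terms by total degree $\|\beta\|$. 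Third, take $\mathbb E$ (resp. $\mathbb E^{\hat{\mathbb Q}}$): by Remark \ref{Rmk:Symmetry} odd-degree terms vanish on both sides, by the cubature property $\|\beta\|\le m$ terms agree, so the difference is controlled by the first non-matching degree, which for $m\ge 3$ and $\psi\in\bar C^5_b$ gives the leading contribution of size $h_k^{2}\|\psi\|_{4,\infty}$ (degree-$4$ iterated integrals scale like $h_k^2$) plus a degree-$5$ remainder of size $h_k^{5/2}\|\psi\|_{5,\infty}$. Finally, bound the iterated-integral moments by the standard scaling $\mathbb E|J^\beta_{t_k,t_{k+1}}|\le C h_k^{\|\beta\|/2}$ and its cubature analogue (bounded variation + rescaling, as used repeatedly in Proposition \ref{Prop:OneStepCub}), and assemble the estimate.

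The main obstacle is bookkeeping: $\hat\Theta$ is not simply $Q_{t_k,t_{k+1}}$ applied to a fixed test function, because the multiplicative weight $\hat\Lambda^x_{k,k+1}$ itself depends on the path increment $\Delta\hat\omega_k$ (through the $\eta^z\Delta\hat\omega_k$ factor), so the cubature-matching argument must be applied to the \emph{product} $\hat\Lambda^x_{k,k+1}\psi(\hat X^{t_k,x}_{t_{k+1}})$, i.e. one needs to expand that product in iterated integrals and check that its coefficients on $\mathcal A_m$ match those of $\Lambda^x\psi(X)$ on the Brownian side — in particular, one must verify that the \emph{cross terms} between the $\Delta\omega_k$ coming from $\Lambda$ and the $\Delta\omega_k$'s coming from $\psi(X)$ (which, via Stratonovich integration by parts as in the proof of Lemma \ref{Lem:OneStepControlBwd}ii, reduce to higher iterated integrals $J^{\beta'}$) are all of degree $\le m$ or else of degree $\ge 4$ so as to land in the stated remainder. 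Since $m\ge 3$ and we only carry $\psi$ to order $5$, the worst mismatched degree is $4$, which is exactly why the bound has the form $h_k^2\|\psi\|_{4,\infty} + h_k^{5/2}\|\psi\|_{5,\infty}$; checking that nothing of degree $3$ survives requires the symmetry of the cubature (Remark \ref{Rmk:Symmetry}) together with the fact that $\mathbb E J^\beta = 0$ for odd $\|\beta\|$. I would organize the mismatch analysis by a small table of contributing multi-indices rather than a brute-force expansion.
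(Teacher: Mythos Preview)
Your approach is workable in outline but differs from the paper's, and one step is imprecisely justified.

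\textbf{What the paper does.} The paper does \emph{not} freeze the coefficients of $\Lambda$. Instead it introduces an extended forward system
\[
\mathrm d(X_t,\Lambda_t,\chi_t)^\top = (\text{drift})\,\mathrm dt + (\text{diffusion})\circ \mathrm dW_t
\]
in which $\Lambda_t$ solves its own Stratonovich SDE driven by $\eta^z(t,X_t)$ and $\chi_t$ carries the $\eta^y$ part, with initial condition $(x,1,e^{h_k\eta^y(t_k,x)})$. One then applies a single Stratonovich--Taylor expansion to the test function $\Psi(t,y,\lambda,\chi)=\lambda\chi\,\psi(t,y)$ using the enlarged vector fields $\tilde V^\beta$. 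The cubature side is expanded separately (splitting the factor $e^{h_k\eta^y}(1+\eta^z\Delta\hat\omega)$ and expanding $\psi(\hat X)$), and the two expansions are compared term by term. The cross terms between the multiplicative weight and $\psi(X)$ are captured automatically by the $\tilde V^\beta$, so no freezing and no manual integration-by-parts bookkeeping is needed.

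\textbf{What your approach buys and where it is shaky.} Your plan---freeze, then expand the product---avoids introducing the auxiliary state $(\Lambda,\chi)$ and keeps the computation closer to Proposition \ref{Prop:OneStepCub}, which is appealing. But the justification you give for the freezing error is not right: you write that the replacement ``costs $O(h_k)$ multiplicatively, hence $O(h_k^2)$ once paired with the $O(h_k)$-size of the nontrivial part of $\Lambda-1$''. This is not how the $h_k^2$ arises. The leading contribution to $\Lambda-\Lambda^{\text{frozen}}$ is $\int_{t_k}^{t_{k+1}}[\eta^z(s,X_s)-\eta^z(t_k,x)]\,\mathrm dW_s$, which is an It\^o martingale of size $O(h_k)$; it is $O(h_k^2)$ \emph{in expectation} (and likewise when paired with $\psi(X)$) only because of the martingale property and because the degree-$2\times$degree-$1$ covariances vanish---not because of any product structure with $\Lambda-1$. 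You would need to make that argument explicit, and also check that the Stratonovich correction $\tfrac12\sum_a V^{(a)}\eta^z_a$ (which disappears under freezing) is accounted for. The paper's extended-system device sidesteps exactly this delicacy: the running $\eta^z(s,X_s)$ enters the enlarged drift and diffusion coefficients directly, so the correction terms land in $\tilde V^{(0)}$ and $\tilde V^{(a,a)}$ without any separate estimate.
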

\begin{proof}

 {We look for an Itô-Taylor expansion for the two terms. From the definition of $\Theta$ in \ref{Eq:ThetaDefined} and $\hat \Theta$ in  \eqref{Eq:ThetaHatDef}, we see that we need to consider the joint dynamics of $X$, $\Lambda$  and $\hat \Lambda$. Consider the system}
\[  d\begin{bmatrix}  X^{t_k,x}_t\\  {\varrho^{t_k,x}_t} \\ \chi_t^{t_k,x} \end{bmatrix} 
= \begin{bmatrix} \bar{b}(t,X_{t}^{t_k,x}) \\  
- \frac{1}{2} \varrho^{t_k,x}_t \left(  |\eta^z(t,X_t^{t_k,x})|^2 + \sum_{a=1}^d V^{(a)}\eta^z_a(t,X_{t}^{t_k,x}) \right) \\ 
\chi_t^{t_k,x} \left(\eta^y(t,X_t^{t_k,x})- \eta^y(t_k,x) \right)  
\end{bmatrix} \ud t  
+\begin{bmatrix} \sigma(t,X_t^{t_k,x}) \\ 
\varrho^{t_k,x}_t \eta^z(t,X_t^{t_k,x}) \\ 
0 \end{bmatrix} \circ \ud W_t \]
with initial condition $(X^{t_k,x}_{t_k}, \varrho^{t_k,x}_{t_k}, \chi^{t_k,x}_{t_k} )^\top = (x,1,e^{h_k \eta^y(t_k,x)})^\top$.   { Note in particular that $\Lambda^x_{t_k,t} = \varrho^{t_k,x}_t \chi_t^{t_k,x}$, while $\hat \Lambda^x_{t_k,t} =  \chi_t^{t_k,x}(1+\eta^z(t_k,\hat X_{t_k}^{t_i,x}) \Delta \hat \omega_k) $.}

Let us denote by $\tilde V^\beta$ the  iterated differential operators associated to this system,  {so that we have for any sufficiently regular $\phi: [0,T]\times\R^d\times\R\times\R\rightarrow \R$,} 
\begin{align*}  
\tilde{V}^{(a)} \phi(t,y,\rho,\chi)   = & V^{(a)} \phi(t,y,\rho, \chi) + \rho \eta_a^{z}(t,y)\partial_\rho \phi(t,y,\rho, \chi), \text{ for } a \in \set{1,\dots,d}\,; \\
 \tilde{V}^{(0)} \phi(t,y,\rho, \chi)   = & V^{(0)} \phi(t,y,\rho, \chi) - \frac{1}{2} \rho \left( |\eta^z(t,y)|^2 +  \sum_{a=1}^n V^{(a)}\eta^z_a(t,y) \right)\partial_\rho \phi(t,y,\rho, \chi) \\
& \quad  + \chi \left(\eta^y(t,y)  - \eta^y(t_k,x)\right) \partial_\chi \phi(t,y,\rho, \chi)  ; \\
 \tilde{V}^{(a_1,a_2)} \phi(t,y,\rho,\chi)   = & V^{(a_1,a_2)} \phi(t,y,\rho, \chi) + \rho   V^{(a_1)} [ \eta_{a_2}^{z}](t,y)\partial_\rho \phi(t,y,\rho, \chi)\\
 & +   \rho \eta_{a_2}^{z}(t,y)   V^{(a_1)} [ \partial_\rho \phi](t,y,\rho, \chi) + \rho \eta_{a_1}^{z}(t,y)   V^{(a_2)} [ \partial_\rho \phi](t,y,\rho, \chi) \\
 & + \rho \eta_{a_1}^{z}(t,y)  \eta_{a_2}^{z}(t,y)  \partial_\rho \phi(t,y,\rho, \chi) + \rho^2 \eta_{a_1}^{z}(t,y)  \eta_{a_2}^{z}(t,y)  \partial^2_{\rho^2 }\phi(t,y,\rho, \chi),
 \end{align*}
for $a_1, a_2 \in \set{1,\dots,d}$. Thus, from a Taylor-Stratonovich expansion of order 3 on the function $\phi(t,y,\rho,\chi)=\rho \chi \psi(y)$, and after some calculations we obtain
\begin{align*}  
\Theta_{t_k,t_{k+1}} \psi(x) & = \E[  \varrho^{t_k,x}_{t_{k+1}}   \chi^{t_k,x}_{t_{k+1}} \psi(t_{k+1}, X^{t_k,x}_{t_{k+1}}) ] \\
= & \sum_{\beta \in \mathcal{A}_3} (\E J^\beta_{t_k, t_{k+1}}) \tilde V^{\beta} \phi (t_k,x,e^{\eta^y(t_k,x)},1)  + \sum_{\beta \in \partial \mathcal{A}_3} \E J^\beta_{t_k,t_{k+1}}(   \tilde V^{\beta} \phi(.,X^{t_k,x}_., \varrho^{t_k,x}_{.},   \chi^{t_k,x}_{.} )  )\\
 = &  e^{ \eta^y(t_k,x) h_k} \psi(t_k,x) +   h_k e^{\eta^y(t_k,x)h_k} \left[\right(V^{(0)} + \frac{1}{2}\sum_{a=1}^d V^{(a,a)} + \eta^z_a V^{(a)}\left) \psi\right] (t_{k},x)  \\
 & + \frac{1}{2} h_k \sum_{a=1}^d  e^{\eta^y(t_k,x)h_k} (\psi  V^{a}[ \eta^z_{a} ]  ) (t_k,x) +  \sum_{\beta\in \partial \tilde{ \mc{A}_3}} \E \left( J^\beta_{t_k,t_{k+1}} \left[ \tilde{V}^\beta \phi \right] \right).
\end{align*}
 {
Similarly, we can define $\hat \phi_1(t,y,\rho,\chi) = \chi \psi(t,x)$, $\hat \phi_2^a (t,y,\rho,\chi) = \chi \eta^z_a(t,x)\psi(t,x)$ for |$i=1, \ldots, d$ and repeat the development on $\hat \phi := \hat \phi_1(t,y,\rho,\chi) + \sum_{a=1}^d \phi_2^a (t,y,\rho,\chi) \Delta \hat w_a$. 
}
We get   
\begin{align*}  
\hat \Theta_{t_k,t_{k+1}} \psi(x) & = \E^{\hQ}[ (1+\eta^z(t,x) \Delta\homega_k)  e^{\eta^y(t_k,x)h_k} \psi(t_k,\hX_{t_{k+1}}^{t_k,x} )  ] \\
 = &  e^{ \eta^y(t_k,x) h_k} \psi(t_k,x) +     h_k e^{\eta^y(t_k,x)h_k} \left[\right(V^{(0)} + \frac{1}{2}\sum_{a=1}^d V^{(a,a)} + \eta^z_a V^{(a)}\left) \psi\right] (t_{k},x)   \\
 & +  \frac{1}{2} h_k \sum_{a=1}^d  e^{\eta^y(t_k,x)h_k} (\psi  V^{a}[ \eta^z_{a} ]  ) (t_k,x) +  \sum_{\beta\in \partial \tilde{ \mc{A}_3}} \E^{\hQ} \left( I^\beta_{t_k,t_{k+1}} \left[ \tilde V^\beta \hat \phi_1 \right] \right)\\
 & \sum_{a=1}^d \sum_{\beta\in \partial \tilde{ \mc{A}_3}} \E^{\hQ} \left(   I^{a\star \beta}_{t_k,t_{k+1}} \left[ \tilde V^{a\star \beta} \hat \phi_2 \right] +    \sum_{\iota = 1}^{|\beta|_0} I^{\beta_{1:\iota}\star a\star\beta_{\iota+1:|\beta|_0}   }_{t_k,t_{k+1}} \left[ \tilde V^\beta \hat \phi_2  \right] \right) .
\end{align*}

 {
Note that all terms match except for the residuals, which we can control as Proposition \eqref{OneStepCub}}.\eproof
\end{proof}

\begin{Proposition}\label{pr interm nonlin exp 1}
Suppose that $M\geq 9$. For every $i<j \in \set{1, \dots, n}$, assume that $\gamma \geq m\geq 3$ and that the cubature measure is symmetric. Then, for $\varphi$ defined in \eqref{Eq:varphi_def},

\[  \left|  \sum_{j=i}^{n-2} h_j^2 \left\{  \Theta_{t_i,t_j} \left[\varphi(t_j, .) \right](x)  - \hat \Theta_{t_i,t_j} \left[ \varphi(t_j, .) \right](x) \right\}  \right| \leq K n^{-2} .\]
\end{Proposition}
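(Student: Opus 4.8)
The plan is to compare the two linear semigroups $(\Theta_{t_i,t_j})$ and $(\hat\Theta_{t_i,t_j})$ by a telescoping (hybrid) argument, of the same flavour as the comparison of $P$ and $Q$ in the proof of Theorem \ref{Thm:ExpansionFwdCumulated}, but now dragging along the $\Lambda$--weight. Fix $i$ and set $T_{k}:=\sum_{j=k}^{n-2}h_j^2\,\Theta_{t_k,t_j}[\varphi(t_j,\cdot)]$ for $k\ge i$, with $T_{n-1}=0$. Using that both families are semigroups, for each $j$ one has $\Theta_{t_i,t_j}-\hat\Theta_{t_i,t_j}=\sum_{k=i}^{j-1}\hat\Theta_{t_i,t_k}\,(\Theta_{t_k,t_{k+1}}-\hat\Theta_{t_k,t_{k+1}})\,\Theta_{t_{k+1},t_j}$; multiplying by $h_j^2$, summing over $j$ and exchanging the order of summation gives
\[ \sum_{j=i}^{n-2} h_j^2\big(\Theta_{t_i,t_j}-\hat\Theta_{t_i,t_j}\big)[\varphi(t_j,\cdot)] \;=\; \sum_{k=i}^{n-2} \hat\Theta_{t_i,t_k}\Big[\big(\Theta_{t_k,t_{k+1}}-\hat\Theta_{t_k,t_{k+1}}\big)[T_{k+1}]\Big]. \]
Two elementary bounds then feed into this. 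For the approximate semigroup, the estimate on $\E^{\hQ}[|\hat\Lambda^x_{i,k}|]$ already established in the proof of Lemma \ref{Lem:PreGlobal} yields $|\hat\Theta_{t_i,t_k}\psi|_\infty\le C|\psi|_\infty$ with $C=\exp(|\eta^y|_\infty T+\tfrac12|\eta^z|_\infty^2 T)$, uniformly in $i,k$. For the one-step error, Lemma \ref{Lem:OneStepTheta} gives $|(\Theta_{t_k,t_{k+1}}-\hat\Theta_{t_k,t_{k+1}})\psi|_\infty\le K(h_k^2\|\psi\|_{4,\infty}+h_k^{5/2}\|\psi\|_{5,\infty})$ for $\psi\in\bar C^5_b$. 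Combining, the claim reduces to showing $\sum_{k=i}^{n-2}\big(h_k^2\|T_{k+1}\|_{4,\infty}+h_k^{5/2}\|T_{k+1}\|_{5,\infty}\big)\le Kn^{-2}$.

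The heart of the argument is a regularity estimate for $T_{k}$. Writing $\Theta_{r,t}\nu(x)=\E[\Lambda^x_{r,t}\,\nu(X^{r,x}_t)]$ and differentiating in $x$ under the expectation, the derivatives of $\Theta_{r,t}\nu$ up to order $5$ are controlled by $\|\nu\|_{5,\infty}$ times sup-norms of the first five $x$-derivatives of the flow $X^{r,\cdot}_t$ — bounded under either Assumption \ref{H0} or \ref{H1} since $\bar b,\sigma$ are smooth — and of $\Lambda^{\cdot}_{r,t}$; the latter are expressed, via \eqref{eq de Lambda}, through $\eta^y,\eta^z=\partial_yf,\partial_zf$ evaluated along the path and their iterated operators $V^{(a)}$, hence by the chain rule and Proposition \ref{Prop:u_reg} they are bounded on $[r,t]$ by a constant under Assumption \ref{H0} and by $K(T-t)^{-q}$ for a fixed $q=q(m,d)$ under Assumption \ref{H1}. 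Taking $\nu=\varphi(t_j,\cdot)$ and using \eqref{Ineq:ControlVarphi} together with Proposition \ref{Prop:u_reg} — this is where $M\ge 9$ is needed, to make $\|\varphi(t_j,\cdot)\|_{5,\infty}\le K\|u\|_{j;9,\infty}$ finite — yields $\|\Theta_{t_{k+1},t_j}\varphi(t_j,\cdot)\|_{5,\infty}\le K$ under Assumption \ref{H0} and $\le K(T-t_j)^{-p}$ for a fixed $p$ under Assumption \ref{H1}. Summing against $h_j^2$ gives $\|T_{k+1}\|_{5,\infty}\le K\sum_{j=k+1}^{n-2}h_j^2\le Kn^{-1}$ under Assumption \ref{H0}, and $\|T_{k+1}\|_{5,\infty}\le K\sum_{j=k+1}^{n-2}h_j^2(T-t_j)^{-p}\le Kn^{-1}(T-t_{k+1})^{-q'}$ under Assumption \ref{H1} by Corollary \ref{Cor:ControlSum2} (and Lemma \ref{Lem:ApproxSum}), which is where the condition $\gamma\ge m$ first intervenes; the same bounds hold for $\|T_{k+1}\|_{4,\infty}$.

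Assembling, under Assumption \ref{H0} we get $\sum_{k=i}^{n-2}(h_k^2+h_k^{5/2})\|T_{k+1}\|_{5,\infty}\le Kn^{-1}\sum_k h_k^2\le K'n^{-2}$; under Assumption \ref{H1} we get $\sum_{k=i}^{n-2}(h_k^2+h_k^{5/2})\,Kn^{-1}(T-t_{k+1})^{-q'}\le Kn^{-2}$ by a second application of Corollary \ref{Cor:ControlSum2}, again exploiting $\gamma\ge m$ to absorb the singularity at $T$. This proves the proposition. I expect the main obstacles to be (i) the gradient bound $\|\Theta_{r,t}\nu\|_{5,\infty}\le K(T-t)^{-q}\|\nu\|_{5,\infty}$ under the UFG hypothesis, where one must verify that the only source of blow-up near $T$ is the $x$-differentiation of the $\Lambda$-weight and control it precisely through Proposition \ref{Prop:u_reg}; and (ii) the bookkeeping of the exponents $p,q,q'$ in the two nested sums, which is exactly what pins down the requirement $\gamma\ge m$.
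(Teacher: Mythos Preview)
Your telescoping decomposition, the uniform bound on $\hat\Theta_{t_i,t_k}$, and the appeal to Lemma \ref{Lem:OneStepTheta} are exactly what the paper does. The gap is in your gradient estimate for $\nu(t_{k+1},\cdot;t_j):=\Theta_{t_{k+1},t_j}[\varphi(t_j,\cdot)]$ under Assumption \ref{H1}. By differentiating under the expectation you obtain at best
\[
\|\nu(t_{k+1},\cdot;t_j)\|_{5,\infty}\le K\,\|\varphi(t_j,\cdot)\|_{5,\infty}\le K\,\|u\|_{j;9,\infty}\le K\,(T-t_j)^{-4},
\]
with the last step coming from Proposition \ref{Prop:u_reg}. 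But then $\sum_{j=k+1}^{n-2}h_j^2(T-t_j)^{-4}$ is \emph{not} controlled: with $\ell=2$ and $\beta=4$ one has $\gamma(\ell-\beta)=-2\gamma<\ell-1$, so Lemma \ref{Lem:ControlSum} gives a bound of order $n^{2\gamma}$, independent of $k$. Your claimed inequality $\sum_{j=k+1}^{n-2}h_j^2(T-t_j)^{-p}\le Kn^{-1}(T-t_{k+1})^{-q'}$ therefore fails for the relevant value of $p$, and Corollary \ref{Cor:ControlSum2} does not rescue it (the sum is dominated by its last terms, near $j=n-2$, not by $j=k+1$). Incidentally, the dominant blow-up is \emph{not} the $x$-differentiation of the $\Lambda$-weight, as you conjecture in point (i), but the derivatives of $\varphi(t_j,\cdot)$ itself.

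The paper avoids this by proving a genuine \emph{smoothing} estimate: it shows
\[
\|V^\beta\nu(t_{k+1},\cdot;t_j)\|_\infty\le K\,\|u(t_{k+1},\cdot)\|_{\|\beta\|+4,\infty},
\]
i.e.\ the singularity is transferred from $t_j$ to $t_{k+1}$. The mechanism is to write $\nu$ under a Girsanov change of measure as $\E^{\P^*}[e^{\int\eta^y}\varphi(t_j,X_{t_j}^{t_{k+1},x})]$, observe that $e^{\int\eta^y}$ is a Kusuoka--Stroock process, and then use Malliavin integration by parts (as in Corollary 32 of \cite{crisan_cubature_2013}) to commute the $V^\beta$ through the expectation; since $\varphi(t_j,\cdot)$ is itself a combination of $V^\alpha u(t_j,\cdot)$ with $\|\alpha\|\le 4$, this effectively yields a semigroup acting on $g$ from time $t_{k+1}$, whence the bound at $t_{k+1}$. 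With this, the inner sum becomes $\sum_{k=i}^{j-1}h_k^2\|u(t_{k+1},\cdot)\|_{8,\infty}$, which is tame, and the double sum closes to $Kn^{-2}$. Under Assumption \ref{H0} your direct approach does work, but under \ref{H1} you need this extra regularisation step.
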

\proof

Let us write for convenience 
\[\nu(t_k,x;t_j)   =  \Theta_{t_k,t_j} \left[ \varphi(t_j,.)\right] (x).\]
We first show that, 
\begin{equation}  |V^\beta \nu (t_k,x;t_j) | \leq  K ||\varphi (t_k,.)||_{\beta,\infty}  \leq K' || u(t_k,.) ||_{\beta+4,\infty}.
\label{Eq:NuGradientBounds}
\end{equation}
Recall that $\varphi(t_k,.)$ is, essentially, a sum of derivatives of $u(t_k,.)$ multiplying bounded functions. Hence, broadly speaking, the first inequality tells us that the operator regularizes as if we could ``extract derivatives from the operator''. The second inequality is a direct consequence of  {the explicit expressions for $\varphi$ in Lemma \ref{Lem:OneStepControlBwd} and the  regularity properties of $f, b, \sigma$}.

It is then sufficient to prove the first inequality in \eqref{Eq:NuGradientBounds}. Since
\begin{align*}
\nu(t_k,x;t_j)  &  = \E \left[  e^{ \int_{t_k}^{t_j} \eta^z(s,X_s^{t_k,x}) \ud W_s  +\int_{t_k}^{t_j} \{ \eta^y(s,X_s^{t_k,x}) -\frac{1}{2} |\eta^z(s,X_s^{t_k,x})|^2  \}\ud s   } \varphi( t_j,X_{t_j}^{t_k,x} )  \right]\\
&  = \E^{\P^*} \left[  e^{  \int_{t_k}^{t_j}  \eta^y(s,X_s^{t_k,x})\ud s } \varphi(t_j, X_{t_j}^{t_k,x} ) \,, \right]
\end{align*}
for an appropriately defined $\P^*$. Given our assumptions, Girsanov's theorem implies that the diffusion $X^{t_k,x}$ solves the SDE
\[  X_t^{t_k,x} = x + \int_{t_k}^t \left\{ b(s,X_s^{t_k,x}) + \sigma(s,X_s^{t_k,x}) \eta^z(s, X_t^{t_k,x})^\top \right\}  \ud s +  \int_{t_k}^{t} \sigma(s,X_s^{t_k,x}) \ud W^*_s ,\]
for a $\P^*$ Brownian motion $W^*$, which also satisfies the structural conditions in Assumption \ref{H0} (resp. \ref{H1}). 

Now, note that  because $\eta^y\in \mathcal{C}_b^{N-2}$,  we can show that $ e^{  \int_{t_k}^{t_j}  \eta^y(s,X_s^{t_k,x})\ud s }$ is a Kusuoka-Stroock function as defined in  Definition 22 in \cite{crisan_cubature_2013}. This means that the Malliavin integration by parts results hold up to multiplying by another Kusuoka-Stroock functions (Section 2.6 in \cite{crisan_cubature_2013}). We can then adapt the arguments in Corollary 32 in \cite{crisan_cubature_2013} to deduce the first inequality in \eqref{Eq:NuGradientBounds}. 

Using a telescopic sum,  Lemma \ref{Lem:OneStepTheta} and using \eqref{Eq:NuGradientBounds}, we conclude 
\begin{align*}
& \left|  \sum_{j=i}^{n-2} h_j^2 \left\{  \Theta_{t_i,t_j} \left[\varphi(t_j, .) \right](x)  - \hat \Theta_{t_i,t_j} \left[ \varphi(t_j, .) \right](x) \right\}  \right| \\
& \qquad = \left| \sum_{j=i}^{n-2} h_j^2 \sum_{k=i}^{j-1}  \hat \Theta_{t_i,t_k} \left\{  [ \hat \Theta_{t_k,t_{k+1}} - \Theta_{t_k,t_{k+1}} ]\nu(t_{k+1},.;t_j)   \right  \} (x) \right|  \\
& \qquad \leq  \sum_{j=i}^{n-2} h_j^2 \sum_{k=i}^{j-1} \left|  [ \hat \Theta_{t_k,t_{k+1}} - \Theta_{t_k,t_{k+1}} ]\nu(t_{k+1},.;t_j)   \right|_{L^\infty}    \\
& \qquad \leq  K \sum_{j=i}^{n-2} h_j^2  \left( \sum_{k=i}^{j-1} h_k^2 ||\nu(t_{k+1},.;t_{j})||_{4,\infty} +  h_k^{5/2} ||\nu(t_{k+1},.;t_{j})||_{5, \infty}  \right)   \\
& \qquad \leq  K \sum_{j=i}^{n-2} h_j^2  \left(  \sum_{k=i}^{j-1} h_k^2 ||u(t_{k+1},.)||_{8,\infty}   +  h_k^{5/2} ||u(t_{k+1},.)||_{9,\infty}   \right)   
\\
& \qquad \leq  K \frac{\gamma}{n} \sum_{j=i}^{n-2} h_j^2  \left(  (T-t_j)^{-3/2(1-1/\gamma)}  \right)   
\\
& \qquad \leq K n^{-2},
\end{align*}
where we used Lemma \ref{Lem:ControlSum} to get the last inequality. \eproof

\vspace{5pt}

Before proceeding with the proof of the main expansion result, let us  give the shape of the leading coefficient appearing in the statement of the theorem.

\begin{Definition}\label{de psi-nonlin}
\begin{enumerate}[i.]
\item If Assumptions \ref{H0}  holds and a uniform discretization is used,
\begin{align*}
 \Psi_{T}^{nl}(0,x) =  T \int_0^T\esp{\Lambda_{0,t}^{x} \varphi(t,X_t) } \ud t\,;
\end{align*}
\item If Assumptions \ref{H0}  holds and a decreasing step discretization with $\gamma>m+1$ is used,
\begin{align*}
 \Psi_{T}^{nl}(0,x) =  T^{1/\gamma}\gamma  \int_0^T\esp{\Lambda_{0,t}^{x} \varphi(t,X_t) \mpar{ T - t  }^{\frac{\gamma-1}{\gamma}} } \ud t\,;
\end{align*}
where $\Lambda_{0,t}^{x}$ is defined in \eqref{eq de Lambda}.
\end{enumerate}
\end{Definition}

\noindent\textbf{Proof of Theorem \ref{Thm:MainExpansionBwd}.} 

From Lemma \ref{Lem:PreGlobal},  Proposition \ref{pr interm nonlin exp 1} and equation \eqref{Eq:ThetaDefined} we get

\[ \left| \Delta \hat u_i (x)  +   \sum_{j=i}^{n-2} h_j^2   P_{t_i,t_j} \left[ \Lambda_{t_i,t_j}^{x} \varphi(t_j,.) \right](x)
 \right| \leq K n^{-2}  +  |\Delta\hat u_{n-1}|_\infty \leq K' n^{-2}  \ ,,     \]
where we have used the control on the last step highlighted in Remark \ref{Rmrk:LastStepBwd}.

To conclude our claim, we invoke Lemma \ref{Lem:ApproxSum} with 
$\psi(t)= \esp{\Lambda_{0,t}^{x} \varphi(t,X_t) }$. Indeed, the control on $|\psi|$ readily follows from \eqref{Ineq:ControlVarphi} and the expression for $\Lambda_{0,t}$. Similarly, using the chain rule and the equality
\[ \partial_t \E[V^\beta u(t,X_{t})] = \E\left[V^{0\star \beta} u(t,X_{t}) + \sum_{i=1}^d V^{(i,i)\star \beta} u(t,X_{t})  \right],\]
we conclude that $\psi$ has well defined locally bounded first order derivatives in $[0,T)$, and hence it is of bounded variation in $[0,T-\epsilon]$ for all $\epsilon >0$. 
\eproof

\section{Complexity reduction}
\label{se complexity}

In this section, we aim to control the complexity growth on both the number of steps $n$ and the dimension of the problem.
\subsection{Interpolation operators and numerical schemes}
\label{subse interpolator}
Let $A= \prod_{\ell=1}^d [a^\ell,b^\ell] $ be a hypercube in $\R^d$. Let $\mathbb{E}=\cC^0(A)$ be the set of real valued continuous functions on $A$.  We let $\mathbb{X}(A)$ be a set of interpolation points in $A$. In this section, we consider interpolation operators
$\mathfrak{P}^A: \mathbb{E} \rightarrow \mathbb{E}$
that will be combined with the cubature scheme. To be useful in practice, these operators must satisfy some stability and approximation properties that we explicit now. To this end, we introduce two vector subspaces of $\mathbb{E}$, namely $\mathscr{R}^a$ (used to state the approximation property) with norm $\vvvert . \vvvert$ and $\mathscr{R}^s$ (used to state the stability property). We say that $(\mathfrak{P}^A,\mathscr{R}^a,\mathscr{R}^s)$ is  {compatible} 
if the following conditions are satisfied:
\begin{enumerate}
\item $\mathscr{R}^a \subset \mathscr{R}^s$ and $\mathfrak{P}^A(\mathscr{R}^s) \subset \mathscr{R}^s$
(we can thus restrict $\mathfrak{P}^A$ to $ \mathscr{R}^s$). 
\item
There exists two bounded functions $(\eta_A,\epsilon_A): \mathbb{N}_+ \rightarrow \R_+ \times \R_+ $ such that
\begin{align}
& | \mathfrak{P}^A \psi|_\infty \le e^{\eta_A(\sharp \mathbb{X}(A))}|\psi|_\infty\; \text{ for } \; \psi \in \mathscr{R}^s
\label{eq stability projection}
\\
& |(I-\mathfrak{P}^A )\psi|_\infty \le \epsilon_A(\sharp \mathbb{X}(A)) \vvvert \psi \vvvert \; \text{ for } \; \psi \in \mathscr{R}^a
\label{eq consistency projection}
\end{align}
\end{enumerate}
We now describe the backward scheme that will be used in practice. To simplify the presentation, we will assume that one is interested only in approximating the value function at time $0$ at a given point $x_0 \in \R^d$ and we thus set $A_0 := \set{x_0}$. As the terminal condition is known, no approximation is needed at the last step $n$, we can thus set $A_n = \R^d$ and $\mathfrak{P}_n=\mathfrak{I}$, where $\mathfrak{I}$ is the identity operator on $\cC^0(\R^d)$.
For step $i \in \set{1, \dots, n-1}$, we consider a sequence of hypercubes $(A_i)$ and denote  the associated projection operator $\mathfrak{P}_i:=\mathfrak{P}^{A_i}$. The sequence of hypercubes satisfies, for $1 \le i \le n$,

\begin{align*}
 \bigcup_{x \in \mathbb{X}(A_i) } \mathop{supp}(\hat{X}^{t_{i},x}_{t_{i+1}}) \subset A_{i+1},\quad 0\le i \le n-1 \;.
\end{align*}

\begin{Remark} \label{re const A_i}
\begin{enumerate}
\item Observe that to compute the interpolation operator $\mathfrak{P}^{A_i}$, we only need to know values on $\mathbb{X}(A_i)$, thus the sequence $(A_i)$ will generally be given as
\begin{align}
A_{i+1}:= \left(\bigcup_{x \in \mathbb{X}(A_i) } \mathop{supp}(\hat{X}^{t_{i},x}_{t_{i+1}}) 
\right)^\blacksquare,\; 0\le i \le n-1,
\end{align}
where 
$ (H)^{\blacksquare}$ is the minimal hypercube that contains $H$, i.e.
\[ (H)^{\blacksquare} := \bigcap \mbrace{ A:  H \subset A , A = \prod_{i=1}^d [a_i,b_i] \text{ for } \bm{a},\bm{b} \in \R^d           } .\]
\end{enumerate}
\end{Remark}

\noindent With these notations, we can finally introduce
the projected-cubature backward approximation. For all $x\in A_i$, define

\begin{align}
\bar {u}_i (x) & =  \mathfrak{P}_{i}[\check{u}_i](x) \text{ with } \check{u}_i(x)= \E^{\hQ} \! \left[  \bar{u}_{i+1} \left( \hat{X}_{t_{i+1}}^{t_i,x} \right)   \right]   + h_i f(x,\check{u}_i(x), \check{v}_i(x))  \,,
\label{eq de pre proj scheme u}\\
\check {v}_i (x) & =   \E^{\hQ} \! \left[ \bar{u}_{i+1}\left(  \hat{X}_{t_{i+1}}^{t_i,x}  \right)  \frac{\Delta \hat{w}_i}{h_i}   \right]   \,,\label{eq de pre proj scheme v}
\end{align}

\noindent The terminal condition is set to $(\bar{u}_n,\bar{v}_n) = (g,0)$. 

\vspace{2mm} 
\noindent Thanks to the interpolation operator, to obtain the value of the approximation at time $0$, namely $\check{u}_0(x_0)$, we only need in practice to compute the above scheme for $x \in\mathbb{X}(A_i)$, $1 \le i \le n-1$: This is the main source of complexity for our algorithm. Let
\begin{align}
\cD := \bigcup_{1 \le i \le n-1}\set{t_i}\times \mathbb{X}(A_i)
\end{align}
we shall thus measure below the complexity of our methods in terms of $\sharp \cD$ the cardinal of $\cD$. However, let us  insist  on the fact that, our approximation is then defined and available, without loss of precision, on the bigger space $\bigcup_{i=0}^{n-1}\set{t_i}\times A_i$.

The complexity is obtained through a careful analysis of the scheme convergence and will be computed below for an example of linear interpolation operators. We conclude this section with a key proposition stated in this  abstract setting and which compares the projected-cubature backward approximation with the cubature scheme .

\begin{Proposition} \label{pr abstract control}
Assume that the $(\mathfrak{P}_i)$ satisfy \eqref{eq stability projection}-\eqref{eq consistency projection} with functions $(\eta_i,\epsilon_i)$, denote $N_i = \mathbb{X}(A_i)$, then the following holds
\begin{align}\label{eq pr abstract control}
\max_{0 \le i \le n}|\hat{u}_i - \bar{u}_i|_\infty \le Ce^{\sum_{i=0}^{n-1} \eta_i(N_i)} \sum_{i=0}^{n-1} \epsilon_{i}(N_{i})\vvvert \hat{u}_{i} \vvvert \;, 
\end{align}
where, by a slight abuse of notation, $|\cdot|_\infty$ denotes the sup-norm on $\cC^0(A_i)$, which reduces simply, for $i=0$, to $|\hat{u}_0(x_0) - \bar{u}_0(x_0)|$.
\end{Proposition}
\proof 1. We compare $(\hat{u},\hat{v})$ with $(\bar{u},\bar{v})$, which combines studying the stability of scheme of type \eqref{eq de pre proj scheme u}-\eqref{eq de pre proj scheme v} and some truncation error
given by
\begin{align*}
\varepsilon_i := (\mathfrak{I}-\mathfrak{P}_i)\hat{u}_i \;.
\end{align*}
recall the definition of $(\hat{u}_i,\hat{v}_i)_{0 \le i \le n}$ in Section \ref{subse de backward scheme}.
Observe that
\begin{align*}
|\hat{u}_i - \bar{u}_i|_\infty \le |(\mathfrak{I}-\mathfrak{P}_i)\hat{u}_i|_\infty + |\mathfrak{P}_i \hat{u}_i - \bar{u}_i|_\infty
\end{align*}
Now recalling that $\bar{u}_i = \mathfrak{P}_i \check{u}_{i}$ and using \eqref{eq stability projection}-\eqref{eq consistency projection}, we obtain
\begin{align}\label{eq temp control error}
|\hat{u}_i - \bar{u}_i|_\infty \le \epsilon_i (N_i) \vvvert \hat{u}_i \vvvert + e^{\eta(N_i)}|\hat{u}_i - \check{u}_i|_\infty \;.
\end{align}
The second term in the right hand side of the previous inequality is upper bounded as follows,
\begin{align}\label{eq one step statibility}
|\hat{u}_i - \check{u}_i|_\infty \le e^{C h_i}|\hat{u}_{i+1} - \bar{u}_{i+1}|_\infty\;.
\end{align}
This control is well known and has been obtained several times in slightly different contexts. For sake of completeness, we shall give a short proof below. Now, inserting back the previous inequality in \eqref{eq temp control error} and iterating on $i$, we obtain \eqref{eq pr abstract control}.
\\
2. We now prove \eqref{eq one step statibility}. 
Let us denote $\Delta \bar{u}_i = \hat{u}_i - \bar{u}_i$, $\Delta \check{u}_i = \hat{u}_i - \check{u}_i$ , $\Delta \check{v}_i = \hat{v}_i - \check{v}_i$ and $\delta f_i(x) = f(x,\hat{u}_i(x), \hat{v}_i(x))-f(x,\check{u}_i(x), \check{v}_i(x))$. We then have
\begin{align*}
\Delta \check{u}_i (x) =  \Delta \bar{u}_{i+1}\!\left( \hat{X}_{t_{i+1}}^{t_i,x} \right)  + h_i \delta f_i(x) - \Delta \check{v}_i(x)\Delta \omega_i - \Delta M_i(x) %
\end{align*}
where $\Delta M_i(x)$ satisfies
\begin{align*}
\E^{\hQ} \! \left[ \Delta M_i(x) \right] = \E^{\hQ} \! \left[ \Delta M_i(x) \Delta \omega_i\right] = 0 \,.
\end{align*}
Using the equality $|a|^2 = |b|^2 + 2b(a-b) + |a-b|^2$ with $b=\Delta \check{u}_i (x)$ and $a=\Delta \bar{u}_{i+1}\!\left( \hat{X}_{t_{i+1}}^{t_i,x} \right)$, we obtain, 
\begin{align}\label{eq discrete equation}
|\Delta \check{u}_i(x)|^2 &= \E^{\hQ} \left[|\Delta \bar{u}_{i+1}\!\left( \hat{X}_{t_{i+1}}^{t_i,x} \right)|^2 
+ 2h_i \Delta \check{u}_i (x) \delta f_i(x)
- |\Delta \bar{u}_{i+1}\!\left( \hat{X}_{t_{i+1}}^{t_i,x} \right)-\Delta \check{u}_i (x)|^2
\right]
\end{align}
Since
\begin{align*}
\Delta \check{v}_i (x) = \E^{\hQ} \left[ 
\left\{\Delta \bar{u}_{i+1}\!\left( \hat{X}_{t_{i+1}}^{t_i,x} \right) - \Delta \check{u}_i (x) \right\} \frac{\Delta \hat{w}_i}{h_i}   \right]
\end{align*}
we obtain using Cauchy-Schwarz inequality,
\begin{align*}
-\E^{\hQ} \left[ |
\Delta \bar{u}_{i+1}\!\left( \hat{X}_{t_{i+1}}^{t_i,x} \right) - \Delta \check{u}_i (x)  |^2  \right]
\le
-h_i|\Delta \check{v}_i (x)|^2 \;.
\end{align*}
Letting $L$ be the Lipschitz constant of $f$ and using Young's inequality, we also get
\begin{align*}
2 |\Delta \check{u}_i (x) \delta f_i(x)| \le (\alpha+2L)|\Delta \check{u}_i (x)|^2 + \frac{L^2}\alpha |\Delta \check{v}_i (x)|^2\;.
\end{align*}
for some $\alpha >0$ that will be set later on.
Combining the two previous inequalities with \eqref{eq discrete equation}, we compute
\begin{align*}
|\Delta \check{u}_i (x)|^2\set{1- (1+\alpha+2L)h_i }&\le \E^{\hQ} \left[|\Delta \bar{u}_{i+1}\!\left( \hat{X}_{t_{i+1}}^{t_i,x} \right)|^2 \right] +  h_i  {(\frac{L^2}\alpha-1)} |\Delta \check{v}_i (x)|^2.
\end{align*}
Setting $\alpha$ large enough, we obtain, for $h$ small enough,
\begin{align*}
|\Delta \check{u}_i (x)|^2 \le e^{Ch_i} \left(\E^{\hQ} \left[|\Delta \bar{u}_{i+1}\!\left( \hat{X}_{t_{i+1}}^{t_i,x} \right)|^2 \right] \right)
\end{align*}
for some $C>0$ that does not depend on $h$. Taking the supremum on first the right hand side and then on the left hand side, concludes the proof of \eqref{eq one step statibility}.
\eproof

\subsection{Example of multi-linear interpolation}
In this Section, we give an explicit specification of the projection operator and we study the complexity of two fully implementable methods: namely the basic Euler scheme and a second order method obtained through a Richardson-Romberg extrapolation. In order to simplify the presentation of the results, we use a uniform grid and we strengthen the assumption on the diffusion parameters.
We work then  assuming that there exists $\Lambda>0$ such that
\begin{align}\label{eq unif ellip setting}
 \frac{1}{\Lambda} |x|^2 \leq  x ^\dagger\sigma\sigma^\dagger x \leq \Lambda |x|^2 ; \qquad \text{for all } x\in \R^d. 
\end{align}
The sequence multilinear interpolation operator for $(\mathfrak{P}_i)$  is defined as follows. 
Recall that $A= \prod_{\ell=1}^d [a_\ell,b_\ell] $ is a hypercube in $\R^d$. We denote $|A| := \max_{\ell}|b_\ell- a_\ell |$.   Given a multi-index  $\bm{l} \in (\NN^*)^{d}$ we define a vector of grid sizes
\[ \bm{\delta}_{\bm{l}}(A):= ( \delta_{l_1},\ldots , \delta_{l_d}   ) ; \quad \text{where }
 \delta_{l_\ell} = \frac{ b_\ell-a_\ell}{l_\ell}\text{ for } \ell=1,\ldots d.\]
This set of distances defines a grid $\mathbb{X}(A)$ with nodes denoted by
\[ \bm{\check{x}}_{\bm{l},\bm{j}}(A):= ( a_1+ j_1\delta_{l_1}, \ldots , a_d+ j_d\delta_{l_d}  ), \qquad \text{for } \bm{0}\leq \bm{j} \leq \bm{l}.   \]
We denote $\check{x}^\ell_{l_\ell,j_\ell} := a_\ell+ j_\ell \delta_{l_\ell}$, $1\le \ell \le d$, and observe that $\bm{\check{x}}_{\bm{l},\bm{j}}(A) = (\check{x}^\ell_{l_\ell,j_\ell})_{1\le \ell\le d}$.
By setting
\[ \phi(x):= \begin{cases} 1-|x| & \text{if } x\in [-1,1]\\ 0 & \text{otherwise} \end{cases}, \]
we can define an associated set of nodal basis functions given by
\begin{equation}
 \phi_{\bm{l},\bm{j}} ( x; A ) := \prod_{\ell=1}^d \phi \mpar{ \frac{x_\ell -\check{x}^\ell_{l_\ell,j_\ell} }{\delta_{l_\ell} } } .
 \label{Eq:DefNodalBasis}
\end{equation}
In practice, we use a grid with the same number of points in each direction. 
\begin{align}\label{eq complexity grid t_i}
N_i := \sharp\mathbb{X}(A_i) = l_i^d\;.
\end{align}
and the interpolation operator is  given by
\begin{align}\label{eq de linear interpolation}
\mathfrak{P}_i \varphi(\cdot) = \sum_{\bm{0}\leq \bm{j} \leq \bm{l}} \varphi(\bm{\check{x}}_{\bm{l}_i,\bm{j}}(A_i))\phi_{\bm{l}_i,\bm{j}} (\cdot,A_i)\,,\quad \text{ for all } \varphi \in \cC_0(A_i)\;.
\end{align}
It remains to precise how the $A_i$ are chosen: 
The hypercube $(A_i)$ are defined in a ``minimal'' way according to Remark \ref{re const A_i}. 
\\
For this sequence of linear interpolation operator, $(\mathfrak{P}_i,\mathscr{R}^a,\mathscr{R}^s)$ is  {compatible} in the sense \eqref{eq stability projection}-\eqref{eq consistency projection} with $\mathscr{R}^a = C^2(A_i)$ and  $\mathscr{R}^s = C^0(A_i)$.  
The functions $(\epsilon,\eta)$ are also well known, see e.g. \cite{bungartz_sparse_2004} and we recall them in the following Lemma. 
\begin{Lemma}\label{le consistency - stability interpolation} For $\mathfrak{P}_i$, the multi-linear operator defined above, the following holds
\begin{enumerate}[i)]
\item For all $i$, $\eta_i \equiv 0$. 
\item Set $N_i := \sharp \mathbb{X}(A_i)$, for all $\varphi \in \cC_2(A_i)$
\begin{align*}
|(\mathfrak{I}-\mathfrak{P}_i)\varphi|_\infty \le  \epsilon_i(N_i) \| \varphi \|_{2,\infty}
\; \text{ with } \; \epsilon_i(N) := c \frac{|A_i|^2}{N^{\frac2d}} \;,
\end{align*}
for some $c>0$ which does not depend on $|A_i|$ nor $\varphi$.
\end{enumerate}
\end{Lemma}

\begin{Lemma} \label{le bound c2-norm scheme} Assume that Assumption \ref{H0} is in force with $M\geq 3$, then the following property holds for the backward scheme of Section \ref{subse de backward scheme}:
\begin{align*}
\sup_i \| \hat{u}_i \|_{2,\infty} \le C\;.
\end{align*}
\end{Lemma}
\proof
The claim follows directly from differentiation under the conditional expectation and the boundedness of $f,g$ and their derivatives. \qed

For the sake of clarity, we will from now on indicate in superscript the number of time steps of the scheme under consideration. This will prove useful as we will introduce an extrapolation method.

The following results demonstrate the usefulness of the expansion result of Theorem \ref{Thm:MainExpansionBwd} when combined with the multi-linear interpolation procedure to reduce the complexity of the cubature method. Moreover, to profit from the error expansion, we introduce a Richardson-Romberg extrapolation and define:
\begin{align}
\overline{\overline{u}}^n_0(x_0) := 2\overline{u}^{2n}_0(x_0) - \overline{u}^n_0(x_0)\;, \; n \ge 1.
\end{align}
We then have the following results
\begin{Theorem}\label{th linear projection}
 Let $\epsilon > 0$ be a given precision. Then, the following holds
\begin{enumerate}
\item Order one method: setting $n \sim \frac1{\epsilon}$ and $N_i \sim n^{\frac{d}2}i^d$, for $i \le n-1$, we have that the complexity $\mathscr{\cC} := \sharp \cD$ satisfies
\begin{align}
\cC(\epsilon) = O(\epsilon^{-\frac{3d+1}2}) \;\text{ for }\; |u(0,x_0)-\bar{u}^n_0(x_0)| = O(\epsilon)\;.
\end{align}
\item Order two method: setting $n \sim \frac1{\sqrt{\epsilon}}$ and $N_i \sim n^d i^d$, for $i \le n-1$, we have that the complexity $\mathscr{\cC} := \sharp \cD$ satisfies
\begin{align}
\cC(\epsilon) = O(\epsilon^{-\frac{2d+1}2}) \;\text{ for }\;  |u(0,x_0)-\overline{\overline{u}}^n_0(x_0)| = O(\epsilon)\;.
\end{align}

\end{enumerate}
\end{Theorem}

\proof
1. Order 1 method: We first identify the convergence property thanks to the previous sections
\begin{align}\label{eq order 1 starting point}
|u(0,x_0) - \bar{u}^n_0(x_0)|
\le
|u(0,x_0) - \hat{u}^n_0(x_0)| + \max_i |\hat{u}^n_i - \bar{u}^n_i|_\infty \;.
\end{align}
Combining Proposition \ref{pr abstract control} with Lemma \ref{le consistency - stability interpolation} and Lemma \ref{le bound c2-norm scheme}, we get
\begin{align*}
\max_i |\hat{u}^n_i - \check{u}^n_i|_\infty \le C \sum_{i=1}^{n-1} \left( \frac{|A_i|}{N_i^{\frac1d}} \right)^2
\end{align*}
The uniform-ellipticity assumption on the diffusion
coefficients \eqref{eq unif ellip setting}, and matrix norm equivalence implies that there exist two
constants, \(\lambda_{-}, \lambda_{+}\) such that
\[ \lambda_{-} \sum_{k=0}^i h_k^{1/2} \leq |A_i| \leq \lambda_{+} \sum_{k=0}^i h_k^{1/2}. \]
For the regular time grid we thus get
\begin{align*}
\max_i |\hat{u}^n_i - \check{u}^n_i|_\infty \le \frac{C}{n} \sum_{i=1}^{n-1} \left( \frac{i }{(N^n_i)^{\frac1d}} \right)^2
\end{align*}
Combining \eqref{eq order 1 starting point} with  Theorem \ref{Thm:MainExpansionBwd} and the previous inequality yields \begin{align*}
|u(0,x_0) - \bar{u}^n_0(x_0)| \le \frac{C}{n} \left( 1 + \sum_{i=1}^{n-1} \left( \frac{i }{(N^n_i)^{\frac1d}} \right)^2 \right)
\end{align*}
Setting $N_i = n^{\frac{d}2}i^d$, we compute  $|u(0,x_0) - \bar{u}^n_0(x_0)|  \le \frac{C}n$
and the overall complexity is given
\begin{align*}
\cD = \sum_{i=1}^{n-1} N^n_i = O(n^{\frac{3d+1}2}).
\end{align*}
To reach a precision $\epsilon$, we thus obtain $\mathscr{C}(\epsilon)=O(\epsilon^{-\frac{3d+1}2})$.
\\
2. Order 2 method: We first observe that
\begin{align}\label{eq o2 starting point}
|u(0,x_0) - \overline{\overline{u}}^n_0(x_0)|
\le & |u(0,x_0) - \left(2 \hat{u}^{2n}_0(x_0) -\hat{u}^{n}_0(x_0) \right)|
\\
&
+ 2|\hat{u}^{2n}_0(x_0)-\bar{u}^{2n}_0(x_0)| + |\hat{u}^{n}_0(x_0)-\bar{u}^{n}_0(x_0)|
\end{align}
Let $N_i^{2n} = \sharp A_i^{2n}$ (resp. $N_i^{n} = \sharp A_i^{n}$) for $i \le 2n-1$ (resp. $i \le n-1$ ) be the number of points in each grid at each time-step $t_i$ for the method with $2n$ time-steps (resp. $n$ time-steps). 
Following the computation of the previous step, we obtain
\begin{align*}
2|\hat{u}^{2n}_0(x_0)-\bar{u}^{2n}_0(x_0)| + |\hat{u}^{n}_0(x_0)-\bar{u}^{n}_0(x_0)|
\le
C \left( \sum_{i=1}^{2n-1}\left( \frac{i}{(N^{2n}_i)^\frac1{d}}\right)^2 + \sum_{i=1}^{n-1} \left(  \frac{i}{(N^{n}_i)^\frac1{d}}\right)^2 \right)\;.
\end{align*}
Now, setting $N^n_i \sim N_i^{2n} \sim (ni)^d $, we compute
\begin{align*}
2|\hat{u}^{2n}_0(x_0)-\bar{u}^{2n}_0(x_0)| + |\hat{u}^{n}_0(x_0)-\bar{u}^{n}_0(x_0)|
\le
\frac{C}{n^2}\;.
\end{align*}
Combining the previous inequality with \eqref{eq o2 starting point} and the order two expansion of Theorem \ref{Thm:MainExpansionBwd}, we get
\begin{align}
|u(0,x_0) - \overline{\overline{u}}^n_0(x_0)| \le \frac{C}{n^2}\;.
\end{align}
We now observe that the overall complexity is given
\begin{align*}
\cD = \sum_{i=1}^{n-1} N^n_i + \sum_{i=1}^{2n-1} N^{2n}_i  = O(n^{2d+1}).
\end{align*}
Of course, the gain comes from the precision that is obtained and setting $\epsilon \sim \frac1{n^2}$, we finally compute that
\begin{align*}
\cC(\epsilon) = O(\epsilon^{-d-\frac12})
\end{align*}
\\
\\

\eproof

\subsection{Numerical illustration}
\label{Sec:NumericalTest}

We test numerically the efficiency of our approximation scheme and, in particular, the gain in complexity coming from the Romberg-Richardson method.  We consider the following system:

 \begin{itemize}
 \item Forward equation:  $X= W $, a d-dimensional Brownian Motion.
 \item Backward equation: 
 \begin{align*}
 Y_t = g(W_1) + \int_t^1 f(Y_s,Z_s) \ud s - \int_t^1 Z_s \ud W_s
 \end{align*}
 with $f: \R\times \R^d \rightarrow \R$ given by
\[f (y,z)= \left(y-\frac{2+d}{2d}\right) \sum_{\ell=1}^d z_\ell\]
 and 
 $$g(x)=\frac{k(1,x)}{1+k(1,x)}, \; \text{ with } k(t,x) =  \exp\left(t+\sum_{\ell=1}^d x_\ell\right). $$
\end{itemize}
We use a cubature formula in dimension $d$ of order $m=3$. It is defined, for $j=1, \ldots, 2d$ by $p_j = (2d)^{-1}$ and $\omega_j = (-1)^j e_{\lceil j/2 \rceil} $, where $e_i$ is the $i$th canonical basis element (see \cite{gyurko_efficient_2011}).

\noindent The previous system can be solved analytically. In particular, we get 
$$Y_0= \frac{k(0, \mathbf{0})}{1+k(0,\mathbf{0})} = \frac12.$$

\noindent  {Although the function $f$ is not globally Lipschitz, it can be treated as such, given that it is locally Lipschitz and the processes $Y,Z$ are bounded in the application. Hence, the main results apply in this setting.}

\noindent We look at the results of the algorithm and its extrapolated version. 

 {
We use a sparse implementation of the linear interpolation operator introduced above. We refer the reader to the Appendix for a quick presentation of the sparse grid setting and refer to the seminal paper \cite{bungartz_sparse_2004} for more insight on this topic. Note that, the use of \emph{sparse grid} has already been suggested in the context of BSDEs approximation in \cite{zhang_sparse-grid_2013}, but the forward approximation method used in this paper is different. The sparse grid implementation allows  to obtain numerical results with almost same precision as the linear interpolation but in a smaller running time. 
}
The rate of convergence of the full method is shown in Figure \ref{Fig:NumTests} (Left). 
The original Euler algorithm shows the expected rate of convergence. 
The extrapolated one converges even faster, showing there is possibly an extra cancellation for the next order term. 
In Figure \ref{Fig:NumTests} (Right),  an illustration of the rate of convergence and complexity of the scheme in terms of the time complexity of the algorithm is shown. 
We can see that there is an effective reduction on the overall time to solve the problem with a given error.

 {
Unfortunately, the use of the sparse grid approximation is out of the scope of the theoretical results stated in Section \ref{subse interpolator}. We were not able to obtain the stability property \eqref{eq stability projection} for the sparse interpolator.  Nevertheless, for our numerical example, the stability seems to hold true in practice. It seems to be a challenging question to understand the conditions under which this property could be true.
}

\begin{figure}[H]
\includegraphics[width=0.45\textwidth]{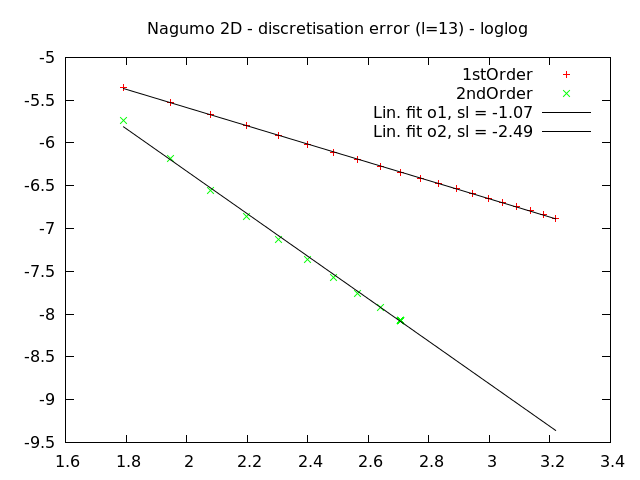}
\includegraphics[width=0.45\textwidth]{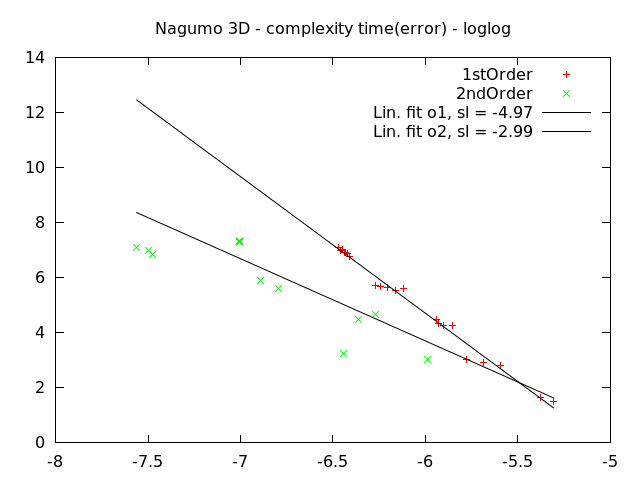}
\caption{Left: Log of the error of the scheme in terms of the log of the number of steps (n). 
The expected rate of convergence is observed for the original algorithm and a better than  expected for the extrapolated one. (Example in dimension 2). 
Right: Log-complexity time (in seconds) as a function of log error (Example in dimension 3). 
Data as in the text.}
\label{Fig:NumTests}
\end{figure}

\appendix
\section{Auxiliary results on the decreasing step discretization}

\begin{Proposition}
Let $a>1$, $n>0$. There exists $C>0$ such that 
\[\frac{1}{n^a} \sum_{k=1}^n \left(\frac{k}{n}\right)^b \leq \begin{cases}  C n^{1-a} & \text{ if } b>-1 \\ C n^{1-a}\log(n) &\text{ if } b=-1 \\ C n^{-(a+b)} &\text{ if } b<-1    \end{cases} .\]
\label{Prop:ControlSumAbs}

\end{Proposition}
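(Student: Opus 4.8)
The plan is to reduce everything to a standard estimate on the partial sums $S_n(b):=\sum_{k=1}^n k^b$. First I would pull the powers of $n$ out of the sum, writing
\[
\frac{1}{n^a}\sum_{k=1}^n\left(\frac{k}{n}\right)^b \;=\; n^{-(a+b)}\,S_n(b),
\]
so that the three cases in the statement correspond exactly to the three asymptotic regimes of $S_n(b)$ (polynomial growth of order $b+1$, logarithmic growth, bounded), multiplied through by $n^{-(a+b)}$.

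Next I would treat the three cases. \emph{Case $b>-1$.} If $b\ge 0$ one has the trivial bound $S_n(b)\le n\cdot n^{b}=n^{b+1}$. If $-1<b<0$ the function $x\mapsto x^b$ is decreasing on $(0,\infty)$, so $k^b\le\int_{k-1}^{k}x^b\,\mathrm dx$ for $k\ge 2$, giving $S_n(b)\le 1+\int_1^{n}x^b\,\mathrm dx\le 1+\tfrac{n^{b+1}}{b+1}\le C_b\,n^{b+1}$ since $b+1>0$ and $n^{b+1}\ge 1$. In either sub-case $S_n(b)\le C_b\,n^{b+1}$, and multiplying by $n^{-(a+b)}$ yields the bound $C\,n^{1-a}$. \emph{Case $b=-1$.} Here $S_n(-1)=\sum_{k=1}^n 1/k\le 1+\log n\le C\log n$ for $n\ge 2$ (and $n=1$ is absorbed into the constant), so $n^{-(a-1)}S_n(-1)\le C\,n^{1-a}\log n$. \emph{Case $b<-1$.} Then $-b>1$, so the series $\sum_{k\ge 1}k^b$ converges and $S_n(b)\le \sum_{k\ge 1}k^b=:C<\infty$ uniformly in $n$, whence $n^{-(a+b)}S_n(b)\le C\,n^{-(a+b)}$.

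Finally I would take $C$ to be the maximum of the finitely many constants produced above. I do not expect any genuine obstacle: the only mild subtlety is the behaviour at $k=1$ in the sub-case $-1<b<0$, where $x^b$ blows up as $x\to 0^+$, which is why the first term is isolated before applying the integral comparison; and one should note that the hypothesis $a>1$ is not actually used to obtain the stated inequalities (it only guarantees that the right-hand sides tend to $0$, which is what the later applications exploit).
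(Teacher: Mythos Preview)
Your proof is correct and follows essentially the same route as the paper: both arguments reduce to bounding the partial sums $\sum_{k=1}^n k^b$ (equivalently $\tfrac1n\sum_{k=1}^n (k/n)^b$) by integral comparison, splitting into the same three regimes for $b$. The only differences are cosmetic bookkeeping (you factor out $n^{-(a+b)}$ first and compare $\sum k^b$ to $\int x^b\,\mathrm dx$, while the paper works directly with $(k/n)^b$ and compares to $\int_{1/n}^1 x^b\,\mathrm dx$), and your handling of the $k=1$ term in the sub-case $-1<b<0$ is arguably a bit cleaner.
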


\begin{proof}

\begin{itemize}
\item If $b \geq 0$,  the function $x^b$ is non-decreasing in $[0,1]$ and 
\[\frac{1}{n}  \sum_{k=1}^n \left(\frac{k}{n}\right)^b \leq \int_{1/n}^1 x^b \ud x = \frac{1}{b+1}(1-n^{-(b+1)}) \leq (b+1)^{-1}.\]
\item If $-1 \leq b < 0$, the function $x^b$ is decreasing in $[0,1]$ and 
\[\frac{1}{n}  \sum_{k=1}^n \left(\frac{k}{n}\right)^b =  n^{-(b+1)} +   \frac{1}{n}  \sum_{k=2}^{n} \left(\frac{k}{n}\right)^b \leq 1+\int_{1/n}^{1} x^b \ud x,\]
where the last integral is bounded by $(b+1)^{-1}$ if $b>-1$ and by $\log(n)$ if $b=-1$. 
\item If $b<-1$, the series $\sum_{k=1}^n k^{b}$ is convergent and increasing. The claim follows with $C$ the limit of the series. 
\end{itemize}
\end{proof}

\begin{Lemma}
Let $\gamma \geq 1, \ell \geq 1$. Let $\psi$ be such that in $[0,T)$, 
\[  |\psi(t)| \leq C |T-t|^{-\beta}   \]
Then, there exists a  $C'>0$ such that
\begin{equation} 
\sum_{k=0}^{n-1} |\psi(t_k)| h_k^\ell   \leq  
\begin{cases}
C'n^{-(\ell-1)} & \text{if } \gamma(\ell-\beta)>\ell-1\\
C'n^{-(\ell-1)}  \log(n) & \text{if } \gamma(\ell-\beta)=\ell-1\\
C' n^{- \gamma ( \ell-\beta) } &\text{if }  \gamma(\ell-\beta)< \ell-1  
\end{cases}.
\label{Eq:ControlSumPsi}
\end{equation}
\label{Lem:ControlSum}
\end{Lemma}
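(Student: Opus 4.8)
The plan is to reduce the statement to the elementary sum estimate of Proposition \ref{Prop:ControlSumAbs}, after an explicit computation of $T-t_k$ and $h_k$ from the definition \eqref{Eq:DefTk} of the grid. First I would observe that, for $0\le k\le n-1$, one has $T-t_k = T\,(1-k/n)^{\gamma}$, so that $t_k\in[0,T)$ (hence $\psi(t_k)$ is well defined) and the hypothesis on $\psi$ gives $|\psi(t_k)|\le C\,T^{-\beta}(1-k/n)^{-\gamma\beta}$. Next I would bound the step sizes: writing $h_k = T\big[(1-k/n)^{\gamma}-(1-(k+1)/n)^{\gamma}\big] = T\gamma\int_{1-(k+1)/n}^{1-k/n}s^{\gamma-1}\,\ud s$ and using that $s\mapsto s^{\gamma-1}$ is nondecreasing on $[0,1]$ (because $\gamma\ge 1$), one gets $h_k\le \frac{T\gamma}{n}(1-k/n)^{\gamma-1}$, hence $h_k^{\ell}\le (T\gamma)^{\ell}\,n^{-\ell}\,(1-k/n)^{\ell(\gamma-1)}$.

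Multiplying the two bounds gives, with $b:=\ell(\gamma-1)-\gamma\beta = \gamma(\ell-\beta)-\ell$,
\[
|\psi(t_k)|\,h_k^{\ell}\ \le\ C\,(T\gamma)^{\ell}\,T^{-\beta}\;n^{-\ell}\;(1-k/n)^{\,b}.
\]
Then I would sum over $k=0,\dots,n-1$ and substitute $j=n-k$, so that $1-k/n=j/n$ and $j$ ranges over $1,\dots,n$, obtaining $\sum_{k}|\psi(t_k)|h_k^{\ell}\le C'\,n^{-\ell}\sum_{j=1}^{n}(j/n)^{b}$. Finally I would invoke (the proof of) Proposition \ref{Prop:ControlSumAbs}, which yields $\sum_{j=1}^{n}(j/n)^{b}\le C''\,n$ when $b>-1$, $\le C''\,n\log n$ when $b=-1$, and $\le C''\,n^{-b}$ when $b<-1$. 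Combining with the prefactor $n^{-\ell}$ produces the three bounds $n^{-(\ell-1)}$, $n^{-(\ell-1)}\log n$ and $n^{-(\ell+b)}=n^{-\gamma(\ell-\beta)}$ respectively; since the trichotomy $b>-1$, $b=-1$, $b<-1$ is exactly $\gamma(\ell-\beta)>\ell-1$, $=\ell-1$, $<\ell-1$, this is precisely \eqref{Eq:ControlSumPsi}.

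There is no genuine obstacle here; the argument is essentially bookkeeping. The only points requiring a little care are the one-sided step-size estimate, where the hypothesis $\gamma\ge 1$ is used to make $s\mapsto s^{\gamma-1}$ monotone on $[0,1]$ so that the Riemann-sum comparison gives a clean upper bound valid up to and including $k=n-1$, and the final verification that the exponent $b=\gamma(\ell-\beta)-\ell$ sorts the three regimes of \eqref{Eq:ControlSumPsi} correctly.
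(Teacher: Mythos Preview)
Your proof is correct and follows essentially the same route as the paper: both bound $h_k$ above by $\frac{T\gamma}{n}(1-k/n)^{\gamma-1}$ via the integral representation, combine with $|T-t_k|^{-\beta}=T^{-\beta}(1-k/n)^{-\gamma\beta}$, reindex the resulting sum to $\sum_{j=1}^n (j/n)^{\ell(\gamma-1)-\gamma\beta}$, and conclude from Proposition~\ref{Prop:ControlSumAbs}. Your remark that only the proof (not the literal statement, which assumes $a>1$) of Proposition~\ref{Prop:ControlSumAbs} is needed is apt, since $\ell=1$ is allowed here.
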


\proof
Remark that
\[h_{k} = T \gamma \int_{\frac{k}{n}}^{ \frac{k+1}{n}} (1-x)^{\gamma-1} \ud x   \]
and  for $\gamma \geq 1$, $(1-x)^{\gamma -1}$ is decreasing in $[0,1]$. This implies
\begin{equation}  
\frac{T\gamma}{n} \mpar{1-\frac{k+1}{n}}^{\gamma-1} \leq  h_k \leq  \frac{T\gamma}{n}  \mpar{1-\frac{k}{n}}^{\gamma-1} .
\label{Eq:ControlDeltaK}
 \end{equation}
 
Thus, we obtain
\begin{align*}  
\sum_{k=0}^{n-1} |\psi(t_k)|   h_k ^{\ell} \leq  &  \mpar{ \frac{T\gamma}{n}}^{\ell}  \sum_{k=0}^{n-1} C |T-t_k|^{-\beta}  \mpar{ 1- \frac{k}{n} }^{\ell(\gamma-1) }  \\
= &   C T^{\beta} \mpar{ \frac{ T \gamma}{n}}^{\ell}  \sum_{k=0}^{n-1}   \mpar{ 1- \frac{k}{n} }^{\ell(\gamma-1)-\beta\gamma }  \\
= &  C T^{\beta} \mpar{ \frac{T\gamma}{n}}^{\ell}  \sum_{k=1}^{n}   \mpar{ \frac{k}{n} }^{\ell(\gamma-1)-\beta\gamma }.
\end{align*}
We conclude from Proposition \ref{Prop:ControlSumAbs}.
\eproof

\begin{Corollary}
With the assumptions of Lemma \ref{Lem:ControlSum},
\[
\sum_{k=0}^{n-2} \sup_{t\in [t_k,t_{k+1}]} |\psi(t)| h_k^\ell   \leq  
\begin{cases}
C'n^{-(\ell-1)} & \text{if } \gamma(\ell-\beta)>\ell-1\\
C'n^{-(\ell-1)}  \log(n) & \text{if } \gamma(\ell-\beta)=\ell-1\\
C' n^{- \gamma ( \ell-\beta) } &\text{if }  \gamma(\ell-\beta)< \ell-1  
\end{cases}.
\]
\label{Cor:ControlSum2}
\end{Corollary}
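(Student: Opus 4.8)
The plan is to reduce the claim to Lemma \ref{Lem:ControlSum}: on each subinterval $[t_k,t_{k+1}]$ with $k\le n-2$ we replace the supremum of $|\psi|$ by the values of the dominating function $s\mapsto|T-s|^{-\beta}$ at the two endpoints of the interval (there is no singularity, since $t_{k+1}\le t_{n-1}<T$), and then we treat the single "worst" subinterval $[t_{n-2},t_{n-1}]$ directly. Concretely, since $|\psi(t)|\le C|T-t|^{-\beta}$ on $[0,T)$ and $t\mapsto|T-t|^{-\beta}$ is monotone on $[0,T)$, for $0\le k\le n-2$ one has
\[
\sup_{t\in[t_k,t_{k+1}]}|\psi(t)|\;\le\;C\bigl(|T-t_k|^{-\beta}+|T-t_{k+1}|^{-\beta}\bigr).
\]
Multiplying by $h_k^\ell$ and summing splits the quantity of interest into two pieces. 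For the first, $\sum_{k=0}^{n-2}|T-t_k|^{-\beta}h_k^\ell\le\sum_{k=0}^{n-1}|T-t_k|^{-\beta}h_k^\ell$, and Lemma \ref{Lem:ControlSum} applied to the function $|T-\cdot|^{-\beta}$ itself yields exactly the stated three-case bound.

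For the second piece $\sum_{k=0}^{n-2}|T-t_{k+1}|^{-\beta}h_k^\ell$, I would reindex $j=k+1$ to obtain $\sum_{j=1}^{n-1}|T-t_j|^{-\beta}h_{j-1}^\ell$ and isolate the last term $j=n-1$. For $1\le j\le n-2$ the two-sided estimate \eqref{Eq:ControlDeltaK} gives $h_{j-1}/h_j\le\bigl(\tfrac{n-j+1}{n-j-1}\bigr)^{\gamma-1}\le 3^{\gamma-1}$, hence $\sum_{j=1}^{n-2}|T-t_j|^{-\beta}h_{j-1}^\ell\le 3^{\ell(\gamma-1)}\sum_{j=1}^{n-2}|T-t_j|^{-\beta}h_j^\ell$, which is again covered by Lemma \ref{Lem:ControlSum}. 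The leftover term $j=n-1$ is handled by hand: $T-t_{n-1}=Tn^{-\gamma}$ and, by \eqref{Eq:ControlDeltaK}, $h_{n-2}\le T\gamma 2^{\gamma-1}n^{-\gamma}$, so $|T-t_{n-1}|^{-\beta}h_{n-2}^\ell\le Cn^{-\gamma(\ell-\beta)}$; since the conditions $\gamma(\ell-\beta)>\ell-1$, $=\ell-1$, $<\ell-1$ are precisely the three regimes of the statement, this term is dominated by the right-hand side in every case. Collecting the estimates proves the corollary.

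The only delicate point — and the closest thing to an obstacle — is the final subinterval $[t_{n-2},t_{n-1}]$, where $\psi$ is evaluated nearest to the singularity at $T$: there the crude step-ratio comparison $h_{j-1}\le 3^{\gamma-1}h_j$ is not available (it would require controlling $h_{n-1}$, which is outside the range of the truncated sum), so this contribution must be estimated directly from the explicit form \eqref{Eq:ControlDeltaK} of the grid, and one checks that its order $n^{-\gamma(\ell-\beta)}$ is always absorbed by the bound claimed in the three cases.
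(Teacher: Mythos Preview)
Your argument is correct and follows essentially the same strategy as the paper: both proofs reduce the supremum to the right endpoint value $|T-t_{k+1}|^{-\beta}$ and then absorb the resulting index shift by exploiting that adjacent grid quantities differ by at most a fixed multiplicative factor, landing back on Proposition~\ref{Prop:ControlSumAbs}.

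The only notable difference is in the execution of the shift. The paper first disposes of $\beta\le 0$ (where the bound is decreasing and Lemma~\ref{Lem:ControlSum} applies directly), and for $\beta>0$ uses the elementary inequality $(1-k/n)\le 2\,(1-(k+1)/n)$, valid for all $k\le n-2$, to write $(1-k/n)^{\ell(\gamma-1)}\le 2^{\ell(\gamma-1)}(1-(k+1)/n)^{\ell(\gamma-1)}$; this collapses the whole sum into $\sum_{k=1}^{n-1}(k/n)^{\ell(\gamma-1)-\gamma\beta}$ in one stroke, with no need to isolate a final term. You instead bound the step ratio $h_{j-1}/h_j$ via both sides of \eqref{Eq:ControlDeltaK}, which gives $3^{\gamma-1}$ but only for $j\le n-2$, forcing you to treat $j=n-1$ by hand. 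Both work; the paper's version is a touch cleaner because the grid-point ratio bound $(n-k)/(n-k-1)\le 2$ survives all the way to $k=n-2$, whereas your step-ratio bound loses one index at the end.
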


\proof 
 {
Remark that 
\[\sup_{t\in [t_k,t_{k+1}]} |\psi(t)|  \leq \sup_{t\in [t_k,t_{k+1}]} C |T-t|^{-\beta}.\]} If $\beta\leq 0$, the extreme on the right hand side above is attained at $t_k$, and we proceed exactly as in Lemma \ref{Lem:ControlSum}. Otherwise, the extreme is attained at $t_{k+1}$ and
 \begin{align*}
 \sum_{k=0}^{n-2} \sup_{t\in [t_k,t_{k+1}]} |\psi(t)| h_k^\ell 
 & \leq  C T^\beta\sum_{k=0}^{n-2} (1-\frac{k+1}n)^{-\gamma \beta} h_k^\ell
 \\
 &\leq  C T^\beta\mpar{ \frac{T\gamma}{n}}^{\ell}  \sum_{k=0}^{n-2} (1-\frac{k+1}n)^{-\gamma\beta}  \mpar{ 1- \frac{k}{n} }^{\ell(\gamma-1) }
 \\
 &\leq C T^\beta\mpar{ \frac{T\gamma}{n}}^{\ell} 2^{\ell(\gamma-1)} \sum_{k=1}^{n-1} \mpar{\frac{k}n}^{-\gamma\beta+\ell(\gamma-1)}  
 \end{align*}
and we conclude from Lemma \ref{Lem:ControlSum}.
\eproof

\begin{Lemma}
Under the same assumptions of Lemma \ref{Lem:ControlSum},  suppose that $\gamma(\ell-\beta)>\ell$ and that $\psi$ is of bounded variation in $[0,T-\epsilon]$ for all $\epsilon>0$. Then, for all $i<n-1$
\begin{equation}  
\sum_{k=i}^{n-1} \psi(t_k) h^\ell_{k} =    \mpar{ \frac{T\gamma}{n} }^{\ell-1} \int_{t_i}^{T} \psi(t) \mpar{ 1- \frac t T }^{ \beta^* }   \ud t  + R(i).
\label{Eq:ControlSumPsiIntegral}
\end{equation}
where
\begin{equation} 
\beta^* := (\ell-1)(1 - \gamma^{-1}).
\label{Eq:DefBetaStar}
\end{equation}

and for some $K>0$,
 {
\[|R(i)| \leq  K\mpar{ \frac1{n^{\gamma(\ell-\beta)+1}} + \frac{1}{n^\ell} \mpar{1-\frac{i-1}{n}}^{\gamma(\ell-\beta)-\ell} }    \]
}
\label{Lem:ApproxSum}
\end{Lemma}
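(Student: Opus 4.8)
The plan is to split the discrepancy in \eqref{Eq:ControlSumPsiIntegral} into two pieces. \emph{Step (a): replace $h_k^\ell$ by a weighted integral.} The change of variables $s=T(1-(1-x)^\gamma)$ gives $h_k=T\gamma\int_{k/n}^{(k+1)/n}(1-x)^{\gamma-1}\ud x$ and, since $\gamma\beta^*+\gamma-1=(\ell-1)(\gamma-1)+(\gamma-1)=\ell(\gamma-1)$ by \eqref{Eq:DefBetaStar},
\[
\int_{t_k}^{t_{k+1}}\big(1-s/T\big)^{\beta^*}\ud s=T\gamma\int_{k/n}^{(k+1)/n}(1-x)^{\ell(\gamma-1)}\ud x.
\]
By Jensen's inequality applied to $u\mapsto u^\ell$ on $[k/n,(k+1)/n]$ one has $h_k^\ell\le(T\gamma/n)^{\ell-1}\int_{t_k}^{t_{k+1}}(1-s/T)^{\beta^*}\ud s$, and, bounding the two integrands at their endpoints as in \eqref{Eq:ControlDeltaK}, the matching reverse estimate
\[
0\le\Big(\frac{T\gamma}{n}\Big)^{\ell-1}\!\int_{t_k}^{t_{k+1}}\!\big(1-s/T\big)^{\beta^*}\ud s-h_k^\ell\le(T\gamma)^\ell n^{-\ell}\Big[(1-\tfrac kn)^{\ell(\gamma-1)}-(1-\tfrac{k+1}n)^{\ell(\gamma-1)}\Big].
\]
I would multiply this by $|\psi(t_k)|\le C(T-t_k)^{-\beta}=CT^{-\beta}(1-k/n)^{-\gamma\beta}$, sum over $k\le n-2$, bound the bracket by a constant times $n^{-1}(1-k/n)^{\ell(\gamma-1)-1}$ (with an obvious modification when $\ell(\gamma-1)<1$, the case $\gamma=1$ being trivial since then the bracket vanishes), and be left with $\lesssim n^{-\ell-1}\sum_{k\le n-2}(1-k/n)^{\ell(\gamma-1)-1-\gamma\beta}$. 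The key algebraic observation is that $\ell(\gamma-1)-1-\gamma\beta>-1$ is \emph{equivalent} to $\gamma(\ell-\beta)>\ell$; so by Proposition \ref{Prop:ControlSumAbs} this sum is $\lesssim n$, and step (a) contributes only $O(n^{-\ell})$.

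\emph{Step (b): a weighted Riemann sum.} It then suffices to show that $(T\gamma/n)^{\ell-1}$ times the left-endpoint Riemann-sum error $\sum_{k=0}^{n-1}\int_{t_k}^{t_{k+1}}[\psi(t_k)-\psi(s)](1-s/T)^{\beta^*}\ud s$ is $O(n^{-\ell})$. The top index $k=n-1$ is treated by hand: since $T-t_{n-1}=Tn^{-\gamma}=h_{n-1}$, the pointwise bound $|\psi|\le C(T-t)^{-\beta}$ makes the $k=n-1$ contributions to both $\sum_k\psi(t_k)\int_{t_k}^{t_{k+1}}(1-s/T)^{\beta^*}\ud s$ and $\int_0^T\psi(s)(1-s/T)^{\beta^*}\ud s$ of size $O(n^{-\gamma(\ell-\beta)})$, i.e.\ $o(n^{-\ell})$ after the prefactor. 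For $k\le n-2$ I would use $(1-s/T)^{\beta^*}\le(1-t_k/T)^{\beta^*}$ together with $|\psi(t_k)-\psi(s)|\le V_{[t_k,t_{k+1}]}(\psi)$, the total variation of $\psi$ over $[t_k,t_{k+1}]$ (finite by hypothesis), reducing the whole claim to $\sum_{k=0}^{n-2}(1-t_k/T)^{\beta^*}h_k\,V_{[t_k,t_{k+1}]}(\psi)=O(n^{-1})$.

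\textbf{The main obstacle} is this last estimate: it requires control on how the local variation of $\psi$ grows as $t_k\to T$, whereas $\psi$ is only assumed of bounded variation on compact subintervals of $[0,T)$. I would resolve it via the parabolic gradient bounds behind Propositions \ref{Prop:lin_u_reg} and \ref{Prop:u_reg}: in every instance where Lemma \ref{Lem:ApproxSum} is applied, $\psi$ is in fact absolutely continuous on $[0,T)$ with $|\psi'(t)|\le C(T-t)^{-\beta-1}$ (one extra half-power of blow-up per derivative, in line with $\partial_t\E[V^\beta u(t,X_t)]=\E[L^{(0)}V^\beta u(t,X_t)]$). Then $V_{[t_k,t_{k+1}]}(\psi)\le\int_{t_k}^{t_{k+1}}|\psi'(s)|\ud s\le Ch_k(T-t_{k+1})^{-\beta-1}$, and since $T-t_{k+1}\asymp T-t_k$ for $k\le n-2$ the sum of step (b) is dominated by $\sum_{k=0}^{n-2}(T-t_k)^{\beta^*-\beta-1}h_k^2$, which is exactly of the form handled by Lemma \ref{Lem:ControlSum} with $\ell$ replaced by $2$ and blow-up exponent $\beta+1-\beta^*$; the hypothesis $\gamma(\ell-\beta)>\ell$ then reads $\gamma\big(2-(\beta+1-\beta^*)\big)=\gamma(1+\beta^*-\beta)=\gamma(\ell-\beta)-(\ell-1)>1$, so Lemma \ref{Lem:ControlSum} gives the bound $Cn^{-1}$. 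Multiplying by $(T\gamma/n)^{\ell-1}$ closes step (b) and hence the proof. If one insists on using only ``bounded variation on compacts'', the alternative is to split the sum at the index $K_n$ for which $T-t_{K_n}\asymp n^{-\gamma/(\ell(\gamma-1)-\gamma\beta)}$, bounding the tail $k>K_n$ of both sides by $O(n^{-\ell})$ straight from the pointwise bound and the head by $n^{-1}V_{[0,t_{K_n}]}(\psi)$ weighted by the decaying factor $(1-t_k/T)^{\beta^*}h_k$ through Abel summation — the technically heaviest route.
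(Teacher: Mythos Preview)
Your overall two–step decomposition (replace $h_k^\ell$ by a tractable weight, then control a Riemann–sum remainder) is exactly what the paper does, and your Step~(a) is essentially the paper's remainder $R^1_n$: the paper compares $h_k^\ell$ with $(T\gamma/n)^\ell(1-k/n)^{\ell(\gamma-1)}$ rather than with $(T\gamma/n)^{\ell-1}\!\int_{t_k}^{t_{k+1}}(1-s/T)^{\beta^*}\ud s$, but the two quantities differ by the same bracket $(1-k/n)^{\ell(\gamma-1)}-(1-(k+1)/n)^{\ell(\gamma-1)}$, and both are dispatched via the identity $\ell(\gamma-1)-1-\gamma\beta>-1\Leftrightarrow\gamma(\ell-\beta)>\ell$ together with Proposition~\ref{Prop:ControlSumAbs}. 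So far you match the paper.

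The genuine divergence is in Step~(b). The paper performs the change of variables $t=T(1-(1-x)^\gamma)$, so that the target integral becomes $\int_0^1\theta(x)\,\ud x$ with $\theta(x):=T\gamma\,\psi(t(x))(1-x)^{\ell(\gamma-1)}$, and the sum $(T\gamma/n)^\ell\sum_k\psi(t_k)(1-k/n)^{\ell(\gamma-1)}$ becomes a \emph{uniform} left Riemann sum $(T\gamma)^{\ell-1}n^{-(\ell-1)}\cdot\frac1n\sum_k\theta(k/n)$. For monotone $\theta$ this Riemann--sum error is bounded by boundary contributions $\frac1n|\theta(1-\tfrac1n)|+\frac1n|\theta(0)|+\int_{1-1/n}^1|\theta|$, and these are all $O(n^{-1})$ directly from the pointwise bound $|\theta(x)|\le C(1-x)^{\gamma(\ell-\beta)-\ell}$ with positive exponent. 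No derivative or local–variation information on $\psi$ is needed in the monotone case. The paper then claims the general case follows ``as $\theta$ has bounded variation on $[0,t_{n-1}]$''.

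Your diagnosis of the ``main obstacle'' is therefore well placed: the paper's extension from monotone $\theta$ to arbitrary bounded variation is precisely where the stated hypotheses are thin, since decomposing $\theta=\theta^+-\theta^-$ on $[0,1-1/n]$ gives monotone pieces whose endpoint values are controlled only by $V_{[0,1-1/n]}(\theta)$, which may grow with $n$. Your fix --- assuming $|\psi'(t)|\le C(T-t)^{-\beta-1}$, which indeed holds in every application of the lemma --- is correct and, incidentally, also patches the paper's argument: under that bound one checks $|\theta'(x)|\le C(1-x)^{\gamma(\ell-\beta)-\ell-1}$, so $\gamma(\ell-\beta)>\ell$ gives $V_{[0,1)}(\theta)<\infty$ and the monotone pieces $\theta^\pm$ are uniformly bounded, making the paper's telescoping go through verbatim. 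In short: your route and the paper's differ mainly in whether one first pulls back to a uniform grid on $[0,1]$; the paper's change of variables makes the Riemann--sum step lighter in the monotone case, but both arguments rely (implicitly for the paper, explicitly for you) on the same extra quantitative control on the variation of $\psi$ near $T$.
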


\proof
First, we observe that
\begin{align*}
\int_{t_i}^T\psi(t)\left(1-\frac{t}T\right)^{\beta^*} \ud t = T\gamma \int_{i/n}^1 \psi(T\set{1-(1-x)^\gamma})(1-x)^{\gamma (\beta^*+1)-1} \ud  x =:\int_{i/n}^1 \theta(x) \ud x\;,
\end{align*}
and since $\gamma(\ell-\beta)>\ell$
\begin{align}\label{eq growth theta}
|\theta(x)| \le C(1-x)^{\gamma(\beta^*-\beta+1) -1} = C(1-x)^{\gamma(\ell-\beta) -\ell}\;.
\end{align}
We compute that
\begin{align*}
\sum_{k=i}^{n-1}\psi(t_k)h_k^\ell = \frac{(T \gamma)^\ell}{n^{\ell-1}} \int_{i/n}^1\theta(x) \ud x + R^2_n+R^1_n\;,
\end{align*}
with
\begin{align*}
R^1_n(i) &:= \left(\frac{T \gamma}{n}\right)^{\ell} 
\sum_{k=i}^{n-1}\psi(t_k)\left( \left(\frac{n}{T \gamma}\right)^{\ell} h^\ell_k - \left(1-\frac{k}n\right)^{\ell(\gamma-1)}\right) \;,
\\
R^2_n(i) &:=\frac{(T \gamma)^\ell}{n^{\ell-1}} \left( \sum_{k=i}^{n-1}\frac{\psi(t_k)}{n}\left(1-\frac{k}n\right)^{\ell(\gamma-1)}  - \int_{i/n}^1 \theta(x) \ud  x \right)\;.
\end{align*}
We now study each remainder term separately.
\\
\begin{enumerate}[i.]
\item  We observe that, from \eqref{Eq:ControlDeltaK}, we have
\begin{align}\label{eq control}
\bigg| \left(1-\frac{k}{n}\right)^{\ell(\gamma-1)} - \left(\frac{n}{T\gamma}\right)^\ell h_k^\ell \bigg|
& \le\bigg| \left(1-\frac{k}{n}\right)^{\ell(\gamma-1)} - \left(1-\frac{k+1}{n}\right)^{\ell(\gamma-1)}\bigg| \nonumber
\\
& \le \frac{\ell(\gamma-1)}{n}\left(1-\frac{k}{n}\right)^{\ell(\gamma-1)-1}\;,
\end{align}
since $\ell(\gamma-1) \ge 1$.
\\

Using \eqref{eq growth theta} and \eqref{eq control} 
\begin{align*}
|R^1_n| &\leq \left(\frac{T \gamma}{n}\right)^{\ell} \frac{\ell\left(\gamma-1\right)}{n} \sum_{k=i}^{n-1} \left|\theta\left(\frac{k}{n}\right)\right|\left(1-\frac{k}n\right)^{-1}
\\
& \leq \left(\frac{T \gamma}{n}\right)^{\ell} \frac{\ell(\gamma-1)}{n} \sum_{k=1}^{n-i} \left(\frac{k}n\right)^{\gamma(\ell-\beta) -\ell-1}\\
&\le  {\frac{K}{n^\ell} \mpar{1-\frac{i-1}{n}}^{\gamma(\ell-\beta)-\ell}   }\;,
\end{align*}
where we used, for the last inequality Proposition \ref{Prop:ControlSumAbs}, and the fact that $\gamma(\ell-\beta)> \ell$.

\vspace{5pt}
\item For $R^2_n$, we assume that $\theta$ is increasing. For $\theta$ decreasing, similar computations as the one below can be made and the result holds true for $\theta$ as it has bounded variation on $[0,t_{n-1}]$. We thus observe that, for $k \ge 1$,
\begin{align*}
\theta\left(\frac{k-1}{n}\right) \leq n \int_{\frac{k-1}n}^\frac{k}{n} \theta(x) \ud  x \leq \theta\left(\frac{k}n\right) .
\end{align*}
Summing the previous inequalities and rearranging terms, we obtain
\begin{align*}
\bigg | \int_{i/n}^1 \theta(x) \ud  x - \frac1n\sum_{k=i}^{n-1} \theta\left(\frac{k}n\right)\bigg|
\le \int_{1-\frac1n}^1|\theta(x)| \ud  x + \frac{|\theta(i/n)|}{n} + \frac1n \left|\theta\left(1-\frac1n\right)\right|\;.
\end{align*}
We then compute
\begin{align*}
\frac1n \left|\theta\left(1-\frac1n\right)\right| \le K\frac1{n^{\gamma(\beta^*-\beta+1) +1}}\;,
\end{align*}
and
\begin{align*}
\int_{1-\frac1n}^1|\theta(x)| \ud  x  \le C \int_{1-\frac1n}^1(1-x)^{\gamma(\beta^*-\beta+1) -1}\ud x\le K\frac1{n^{\gamma(\beta^*-\beta+1) +1}}\;,
\end{align*}
 {Finally, \eqref{eq growth theta} controls the remaining term}. This concludes the proof.
\end{enumerate}
\eproof

\section{Sparse grid implementation}

We introduce here the numerical method that has been used in practice. This is a sparse version of the linear interpolation operator presented in the theoretical analysis of Section \ref{se complexity}. The idea is to use less nodal functions than in the case of a full linear interpolation with a minimal degradation of the error but a great improvement in complexity (number of points needed for the interpolation). This sparse grid concept is thoroughly reviewed in the seminal paper \cite{bungartz_sparse_2004}.

Let us thus consider the sparse grid nodal space of order $p$ defined by
\[\mc{V}_p (A):= \mathrm{span} \mbrace{ \phi_{\bm{l},\bm{j}}  ;  (\bm{l},\bm{j}) \in \mc{I}_p (A)   },\]
where
\begin{align}
\mc{I}_p (A):=  \big\{ (\bm{l},\bm{j})  : & \quad 0\leq \sum_{i=1}^d l_i \leq p; \quad  \bm{0}\leq \bm{j} \leq \bm{2^l}; \notag \\ & ( l_i>0 \text { and }  j_i \text{ is odd})   \text{ or }  ( l_i=0 ),  \text{ for } i=1,\ldots, d \big\}  \label{Eq:DefIp}.  
\end{align}

For a function $\psi:A \rightarrow \R$ with support in $A$, we define its associated $\mc{V}_p$-interpolator by
\begin{align}  \label{eq sparse ope}
\sprse{p}{A}{\psi}{x} :=\sum_{ (\bm{l},\bm{j}) \in \mc{I}_p (A) } \theta_{\bm{l},\bm{j}}(\psi;A) \phi_{\bm{l},\bm{j}}(x; A) 
\end{align}
where the operator $\theta_{\bm{l,j}}$ can be defined recursively in terms of $r$, the dimension of $\bm{l}$, by:
\begin{equation} \theta_{\bm{l},\bm{j}} (\psi;A) = 
  \begin{cases}  
  \psi(\check{x}_{\bm{l},\bm{j}}); &  r=0\\
  \theta_{\bm{l}-, \bm{j}-}  (\psi ( \cdot, \check{x}^r_{l_r,j_r} ) ;A-)    ;  & l_r =0\\
     \theta_{\bm{l}-, \bm{j}-}(\psi( \cdot, \check{x}^r_{l_r,j_r} );A-) -\frac{1}{2} \theta_{\bm{l}-, \bm{j}-}(\psi( \cdot, \check{x}^r_{l_r,j_r-1} );A-)   \\
     \qquad\qquad\qquad\qquad\qquad\  -\frac{1}{2} \theta_{\bm{l}-, \bm{j}-}(\psi(\cdot, \check{x}^r_{l_r,j_r+1} )   ;A-) ; &  l_r >0
  \end{cases} 
  \label{Eq:DefTheta}
\end{equation}  
 where,  for a hypercube $A= \prod_{i=1}^d [a_i,b_i] $, $A- := \prod_{i=1}^{d-1} [a_i,b_i]$ and for a multi-index $\bm{k}$ with dimension $r \ge 1$, $\bm{k}-=(k_1,\dots,k_{r-1})$.   {The above definition has to be compared with the full linear interpolation operator given in \eqref{eq de linear interpolation}}.

With the sparse representation at hand, we modify the backward scheme and introduce first the set of points where the function needs to be approximated. Let
\begin{align}\label{eq de Di}
D_{i}  = \begin{cases}  \{x_0\} & \text{ if } $i=0$\\  \mpar{  \bigcup_{x\in D_{i-1}}  \supp \mbra{\hat{X}^{t_{i-1}, x}_{t_{i}}} }^{\mc{V}}_{p _i}& \text{ if } $i=1,\ldots,n-1$ \end{cases} , 
\end{align}
so that $D_i$ is a set of points in a sparse grid of order $p_i$. This sparse grid is constructed on the minimal hypercube that contains the diffusion started at $D_{i-1}$ at time $t_{i-1}$. We denote by $\cD$ the union of all the points in the grids $D_i$, namely 
\begin{align}\label{eq de all the points}
\cD := \bigcup_{i=0}^{n-1} \set{t_i}\times D_i
\end{align}
which forms  a finite grid of $[0,T)\times\R^d$.

Having this set, for a sequence of values $(p_1, \ldots, p_{n-1})$ with $p_i > d-1$, we define the sparse-cubature backward approximation by
 {
\begin{align}
\check {u}_i (x) & =  \sprse{p_i-d+1}{D_i}{ \left( \E^{\hQ} \left[   \check{u}_{i+1} \left( \hat{X}_{t_{i+1}}^{t_i,.} \right)   \right] \right)  + h_i f(\cdot,\check{u}_i(\cdot), \check{v}_i(\cdot)) }{x} 
\label{eq de full scheme u}\\
\check {v}_i (x) & =   
\left(  \E^{\hQ} \left[ \check{u}_{i+1} \left(  \hat{X}_{t_{i+1}}^{t_i,x}  \right)  \frac{\Delta \hat{w}_i}{h_i}   \right] \right)  
\label{eq de full scheme v}
\end{align}
}
for $x \in D_i^\blacksquare$. The terminal condition is set to $(\check{u}_n,\check{v}_n) = (g,0)$.

\vspace{4pt} In practice, the computational effort to obtain  $(\check{u}_i,\check{v}_i)_{i}$ is proportional to the number of points in the grid $\cD$. But, let us  insist  on the fact that, our approximation is then defined and available, without loss of precision, on the bigger space $\cup_{i=0}^{n-1}\set{t_i}\times D_i^\blacksquare$.

\bibliographystyle{plain}
\bibliography{BSDE_Error_Exp.bib}

\end{document}